\definecolor{shadecolor}{gray}{0.9}
\newtheorem{theorem}{Theorem}[section]
\newtheorem{lemma}[theorem]{Lemma}
\newtheorem{proposition}[theorem]{Proposition}
\newtheorem{corollary}[theorem]{Corollary}
\theoremstyle{definition}
\newtheorem{remark}[theorem]{Remark}
\newtheorem{assumption}[theorem]{Assumption}
\newcommand{\cmark}{\ding{51}}
\newcommand{\xmark}{\ding{55}}
\newcommand{\RR}{\mathbbmss{R}} % Real numbers
\newcommand{\EE}[1]{\mathbbmss{E}\left[#1\right]} % Expected value
\newcommand{\PP}[1]{\mathbbmss{P}\left[#1\right]} % Probability measure
\newcommand{\VV}[1]{\mathbbmss{V}\left[#1\right]} % Variance
\newcommand{\Normal}[2]{\mathcal{N}\left(#1,#2\right)} % Normal/Gaussian
\newcommand{\pto}{\overset{p}{\rightarrow}} % Convergence in probability
\newcommand{\dto}{\leadsto} % Convergence in distribution
\newcommand{\Lto}[1]{\overset{\mathbbmss{L}^{#1}}{\rightarrow}} % Convergence in L^#1
\newcommand{\norm}[2]{\left\lVert#1\right\rVert_{#2}} % Norm
\newcommand{\finfluence}[1]{\varphi^{F}\left(#1\right)}
\newcommand{\orinfluence}[1]{\varphi^\texttt{OR}\left(#1\right)}
\newcommand{\ipwinfluence}[1]{\varphi^\texttt{IPW}\left(#1\right)}
\newcommand{\influence}[1]{\varphi\left(#1\right)}
\newcommand{\ninfluence}[1]{\varphi_{n,N}\left(#1\right)}
\newcommand{\ppiinfluence}[1]{\varphi^\texttt{PPI}\left(#1\right)}
\newcommand{\data}{\mathcal{D}}
\newcommand{\onea}[1]{\mathbbmss1\left\{#1\right\}} % Indicator function
\newcommand{\thetatarget}{\theta^\star}
\newcommand{\pitarget}{\pi^\star_{n,N}}
\newcommand{\pibar}{\Bar{\pi}_{n,N}}
\newcommand{\mutarget}{\mu^\star}
\newcommand{\thetaf}{\hat{\theta}^F}
\newcommand{\MSE}[1]{\normalfont \text{RMSE}\left[#1\right]}
\newcommand{\jPtarget}{\mathbbmss{P}^\star}
\newcommand{\cPtarget}{\mathbbmss{P}^\star_{Y|X}}
\newcommand{\mPtarget}{\mathbbmss{P}^\star_{X}}
\newcommand{\cPRtarget}{\mathbbmss{P}^\star_{R|X, n,N}}
\newcommand{\varbar}{\Bar{\mathbbmss{V}}_{\thetatarget}}
\newcommand{\varstar}{\Sigma^\star}
\newcommand{\shortfif}{\varphi^F}
\newcommand{\an}{a_{n,N}}
\newcommand{\argmin}{\text{argmin}}
\newcommand{\mineig}[1]{\lambda_\text{min}\left(#1\right)}
\newcommand{\maxeig}[1]{\lambda_\text{max}\left(#1\right)}
\newcommand{\indep}{\perp \!\!\!\! \perp}
\newcommand{\ds}{\texttt{D\textsuperscript{2}S\textsuperscript{3}}}
\title{\textbf{Semiparametric semi-supervised learning for general targets under distribution shift and decaying overlap}}
\author[1,2]{Lorenzo Testa}
\author[1]{Qi Xu}
\author[1]{Jing Lei}
\author[1,3]{Kathryn Roeder}
\affil[1]{Department of Statistics \& Data Science, Carnegie Mellon University}
\affil[2]{L'EMbeDS, Sant'Anna School of Advanced Studies}
\affil[3]{Department of Computational Biology, Carnegie Mellon University}
\date{\vspace{-0.1em}\texttt{\{ltesta,qixu,jinglei,roeder\}@andrew.cmu.edu} \\ \vspace{1em}\today}
\begin{document}

\maketitle

\begin{abstract}
    In modern scientific applications, large volumes of covariate data are readily available, while outcome labels are costly, sparse, and often subject to distribution shift. This asymmetry has spurred interest in semi-supervised (SS) learning, but most existing approaches rely on strong assumptions -- such as missing completely at random (MCAR) labeling or strict positivity -- that put substantial limitations on their practical usefulness. In this work, we introduce a general semiparametric framework for estimation, inference, and efficiency benchmarking in SS settings where labels are missing at random (MAR) and the overlap may vanish as sample size increases. Our framework, that we label \ds{}, accommodates a wide range of smooth statistical targets -- including means, linear regression coefficients, quantiles, and causal effects -- and remains valid under high-dimensional nuisance estimation and distributional shift between labeled and unlabeled samples. We extend the theoretical guarantees of augmented inverse probability weighting estimators to preserve double robustness, asymptotic normality, and semiparametric efficiency under this challenging \ds{} regime. A key insight is that classical root-$n$ convergence fails under vanishing overlap; we instead provide corrected asymptotic rates that capture the impact of the decay in overlap. We validate our theory through simulations and demonstrate practical utility in real-world applications on the internet of things and public health where labeled data are scarce.
\end{abstract}

\section{Introduction}

In contemporary scientific applications, as highlighted by the success of \textit{prediction-powered inference} \citep{angelopoulos2023prediction}, semi-supervised (SS) learning has emerged as a powerful and increasingly essential paradigm, particularly in contexts where labeled data are expensive, difficult, or ethically challenging to obtain. These situations are ubiquitous in scientific domains such as biomedical research (e.g., large biobanks with partial annotations), public policy analysis (e.g., outcomes available only for selected sub-populations) and digital platforms (e.g., engagement or intervention data available only for active users). In these settings, researchers typically have access to a large volume of unlabeled covariate information, whereas labeled observations -- i.e., data points with observed outcomes of interest -- remain sparse.

A fundamental and often overlooked consequence of this asymmetry is the inherent \textit{violation} of the \textit{positivity} or \textit{overlap} condition. As the amount of unlabeled data grows much faster than the number of labeled samples, it becomes increasingly likely that certain covariate regions are labeled with vanishing probability. Formally, the labeling propensity, defined as the conditional probability of observing the outcome given the covariate, may approach zero on non-negligible portions of the covariate space, violating the standard assumption that the labeling propensity is bounded away from zero almost surely. This phenomenon, known as \textit{overlap decay}, is not merely a technicality, but a structural feature of semi-supervised learning. It affects estimation and inference, and renders many traditional estimators unstable. Crucially, this distinguishes the semi-supervised setting from standard missing data problems, where the labeling mechanism is always assumed to satisfy overlap conditions \citep{tsiatis2006semiparametric}. In SS learning, by contrast, decaying overlap is the norm, and asymptotic theory must be adapted accordingly.

Moreover, much of the existing work in semi-supervised learning \citep{angelopoulos2023prediction, xu2025unified} has been developed under the assumption that labels are \textit{missing completely at random} (MCAR), where the probability of observing a label is independent of both covariates and outcomes. Although analytically convenient, this assumption is rarely plausible in practice, especially in observational studies. In real-world applications where randomized design cannot be implemented, the labeling process is almost always influenced by observable characteristics, such as patient demographics, policy priorities, or platform usage patterns, making the MCAR assumption often inappropriate. A more realistic framework is the \textit{missing at random} (MAR) setting, where the labeling probability depends on the covariates, but is conditionally independent of the unobserved outcome. The MAR assumption, also known as \textit{ignorability} condition or \textit{selection bias}, allows systematic \textit{distribution shift} patterns in the labeling mechanism, while still permitting identifiability of the outcome distribution from observed data.

Under this \textit{decaying distribution shift semi-supervised} (\ds{}) framework, naive applications of existing semi-supervised or missing data methods can yield biased or inefficient estimates, or even fail to be well defined. In this paper, we propose a general semiparametric framework for efficient estimation and inference in semi-supervised settings under distribution shift and decaying overlap. Our framework covers a wide range of statistical inference targets -- including means, causal effects, quantiles, and smooth functionals of the data-generating process -- and offers principled strategies for estimation even when the probability of observing a label vanishes in large regions of the covariate space. Crucially, our theory allows for distributional shift between the labeled and unlabeled samples, meaning that the covariate distributions can be different, so long as appropriate reweighting can be performed with accurate nuisance parameter estimation.

We build on semiparametric efficiency theory to derive influence functions for \textit{general} targets in the \ds{} setting, and characterize augmented inverse probability weighting (\texttt{AIPW}) estimators that are doubly robust, as they remain consistent and asymptotically normal even if one of the two nuisance components (labeling or projection) is misspecified. Moreover, we show that asymptotic normality and semiparametric efficiency remain attainable -- though at a slower rate -- provided that the decay in labeling probability is appropriately controlled. To our knowledge, this is the first investigation of the theoretical guarantees of augmented inverse probability weighting estimators for general parameters of interest under the \ds{} setting. A detailed comparison of our results with existing literature is provided in Section \ref{subsec:1.1-related-work}.

%An important conceptual contribution of our work is a precise characterization of the effective sample size under decaying overlap. While standard SS learning theory often suggests that unlabeled data can be treated as auxiliary or infinite in size, we demonstrate that the contribution of the unlabeled sample to overall efficiency is tightly governed by the overlap structure. In particular, we show that classical root-$n$ rates may be unattainable, and propose corrected convergence rates that reflect the complexity of the distribution shift process. These insights yield practical guidance for method design and performance benchmarking in real-world applications.

\subsection{Related work}\label{subsec:1.1-related-work}
Our work, lying at the intersection of several branches of scholarship, combines the virtues of multiple literatures into a unified inferential framework for semi-supervised learning under distribution shift and decaying overlap.

\textbf{``Pure'' semi-supervised learning.} SS learning has traditionally focused on settings in which the labeled data are assumed to be missing completely at random (MCAR) -- see \citet{zhu2005semi, chapelle2009semi} for a review. Most existing work in this area focuses on estimation methods tailored to specific problems, such as mean estimation \citep{zhang2019semi, zhang2022high}, quantile estimation \citep{chakrabortty2022semi}, and linear regression \citep{azriel2022semi, tony2020semisupervised}. Notable exceptions are \citet{song2024general}, which studies general M-estimation problems, and \citet{xu2025unified}, which encompasses all parameters satisfying some form of functional smoothness. However, the MCAR assumption is increasingly recognized as unrealistic in many scientific applications, limiting the practical utility of such techniques in the presence of distribution shift.

\textbf{Prediction-powered inference.} A recent line of work addressing semi-supervised learning under MCAR is the \textit{prediction-powered inference} (\texttt{PPI}) framework. \texttt{PPI} integrates machine learning predictions into formal statistical procedures, using both labeled and unlabeled data to improve estimation efficiency while maintaining valid inference, resembling the approach proposed by \citet{chen2000unified}. Since the seminal work of \citet{angelopoulos2023prediction}, several extensions have expanded its scope. \citet{angelopoulos2023ppi++} and \citet{miao2023assumption} propose weighting strategies that ensure the \texttt{PPI} estimator performs at least as well as estimators relying solely on labeled data. \citet{zrnic2024cross} further develops the framework by introducing a cross-validation-based approach to learn the predictive model directly from the labeled sample. \citet{kluger2025prediction} generalizes \texttt{PPI} beyond M-estimation tasks and to MAR settings, assuming propensity scores are known by design. \citet{ji2025predictions} links \texttt{PPI} to surrogate outcome models in a MCAR setting.
See \citet{carlson2025unifying} for a summary of additional \texttt{PPI} extensions.

\textbf{\ds{} setting.} Classical results in semiparametric theory offer general principles for constructing doubly robust estimators in settings where outcomes are missing at random \citep{tsiatis2006semiparametric}. These estimators combine outcome projection models and propensity score models -- estimated either from the labeled sample alone or by incorporating information from both labeled and unlabeled data -- to yield consistent estimators of population-level parameters. However, most existing work assumes strong overlap. On the other hand, the literature on limited overlap and positivity violations has largely emerged from the causal inference community, where imbalance in treatment assignment and high-dimensional covariates pose analogous challenges \citep{crump2009dealing, khan2010irregular, rothe2017robust, yang2018asymptotic, d2021overlap}. Proposed solutions include trimming low-overlap regions, covariate balancing, and targeted reweighting schemes that stabilize estimation by controlling variance in sparse areas. Nevertheless, these methods typically assume that the propensity score can approach zero only in some specific regions of the support of the covariates $X$ and does not depend on the sample size. As such, they are ill-equipped to handle settings where the labeling probability vanishes uniformly across the covariate space as the total sample size grows. A notable exception is the recent work of \citet{zhang2023double}, who develop a doubly robust estimator for the outcome mean in a semi-supervised MAR setting with decaying overlap.

\textbf{Semiparametric statistics and missing data.} From a technical standpoint, our work lies in the furrow of semiparametric statistics, which focuses on estimation of finite dimensional functionals of the data generating process in the presence of high-dimensional nuisance functions. The missing data literature provides tools such as inverse probability weighting, augmented IPW, and targeted maximum likelihood estimation, many of which naturally extend to SS learning with MAR labeling. Foundational results on influence functions, tangent spaces, and semiparametric efficiency bounds -- developed by \citet{bickel1993efficient, tsiatis2006semiparametric, van2000asymptotic} -- serve as the theoretical backbone of modern SS learning inference. However, most of these frameworks assume strict positivity. As a result, applying semiparametric tools in semi-supervised learning with shrinking labeling fractions requires substantial modifications to asymptotic analysis. In particular, the decaying overlap forces data to be non i.i.d., requiring a significant extension of semiparametric efficiency theory to non i.i.d.~scenarios. We complement the few available works exploring this direction \citep{mcneney2000application, mcneney1998asymptotic} by adapting foundational results \citep{cam1960locally, le1972limits} to the \ds{} setting.

\subsection{Our contributions}

We propose a general framework for estimation and inference of arbitrary target parameters in the semi-supervised setting with missing at random labeling and decaying overlap. Our work builds on semiparametric theory, but departs significantly from classical setups by relaxing the positivity assumption and allowing the labeling probability to vanish uniformly as the sample size increases. Our main contributions are as follows:
\begin{itemize}
\item \textbf{General-purpose inference under decaying overlap.} We develop a flexible and model-agnostic \ds{} framework to accomodate estimation and inference of general target parameters under MAR labeling and decaying overlap. Our approach accommodates arbitrary functionals -- such as means, causal effects, or linear regression coefficients -- and handles distribution shifts without relying on parametric or Donsker-type assumptions about the projection functions or the missingness mechanism.
\item \textbf{Nonstandard asymptotics, double robustness, semiparametric efficiency.} We extend the theoretical guarantees of augmented inverse probability weighting estimators to the \ds{} framework, showing that they preserve double robustness and asymptotic normality. Importantly, the central limit theorem we prove exhibits a non-standard dependence on the sample size that captures the effect of overlap decay. This opens up the question of semiparametric efficiency in the \ds{} regime, in contrast to classical settings where the propensity score is fixed, i.e.~does not decay as the sample size increases. We characterize the semiparametric efficiency bound in this challenging non i.i.d.~setting, proving a convolution theorem in the spirit of Le Cam.
\item \textbf{Prediction-powered inference under MAR.} As a side product of our theoretical and methodological development, we provide a natural extension of \texttt{PPI} to missing at random settings, greatly increasing its applicability in real-world scenarios. 
\end{itemize}
Table~\ref{tab:contributions} summarizes our contribution in relation to the existing literature, highlighting how our framework integrates MAR labeling, accommodates overlap decay, supports general statistical targets, and is backed up by semiparametric efficiency theory.

\begin{table}
    \centering
    \begin{tabular}{lcccc}
    \toprule
    \textbf{Scholarship} & \textbf{MAR} & \textbf{Decaying overlap} & \textbf{General target} & \textbf{Efficiency theory} \\
    \midrule
    \citet{tsiatis2006semiparametric} & \cmark & \xmark & \cmark & \cmark \\
    \citet{zhang2023double} & \cmark & \cmark & \xmark & \xmark \\
    \texttt{PPI} \citep{angelopoulos2023prediction} & \xmark & \xmark & \cmark & \xmark \\
    \citet{xu2025unified} & \xmark & \xmark & \cmark & \cmark \\
    \ds{} (our proposal) & \cmark & \cmark & \cmark & \cmark \\
    \bottomrule
    \end{tabular}
    \caption{Framing our contribution}
    \label{tab:contributions}
\end{table}

\subsection{Organization}
This paper is organized as follows. Section~\ref{sec:setup} introduces the semi-supervised learning framework under MAR labeling with decaying overlap (\ds{}), and formally defines the statistical estimation problem. In Section~\ref{sec:ext_models}, we present a general strategy for building augmented inverse probability weighting estimators and characterize their asymptotic behavior under \ds{}. Following \citet{tsiatis2006semiparametric, zhang2023double}, we begin by analyzing the case where the propensity score is known by design, and then extend our results to the more realistic setting where the propensity score must be estimated from the data. Here, we also extend the \textit{prediction-powered inference} framework proposed by \citet{angelopoulos2023prediction} to the \ds{} setting%, and we propose an approach to compute the asymptotic variance of our estimators
. We also characterize the notion of semiparametric efficiency in the \ds{} setting, extending semiparametric efficiency theory to accommodate observations generated by a triangular array. This culminates in a convolution theorem characterizing the semiparametric efficiency lower bound. Section~\ref{sec:sim} applies our general theory to specific statistical functionals and assesses the empirical performance of \texttt{AIPW} in the \ds{} setting through simulation studies. In Section~\ref{sec:app}, we illustrate the practical value of our methodology using data from the \texttt{BLE-RSSI} study \citep{mohammadi2017semisupervised}, where we estimate the spatial position of a device based on signal strength measurements captured by remote sensors, and the \texttt{NHEFS} study \citep{hernan2010causal, ertefaie2023nonparametric}, where we estimate the change in weight and smoking intensity in the population using only labeled data from alcohol abstainers. Concluding remarks and potential directions for future research are discussed in Section~\ref{sec:end}. Additional theoretical results, simulation details, and a further application to the \texttt{METABRIC} study \citep{curtis2012genomic,pereira2016somatic}, where we revisit a question posed by \citet{hsu2023learning} and estimate the association between previously identified biomarkers \citep{cheng2021integrating} and survival outcomes in patients with breast cancer, are provided in the Supplementary Material. All code for reproducing our analysis is available at \url{https://github.com/testalorenzo/decMAR_inf}.

\section{Problem setup}
\label{sec:setup}

We assume that we observe a \textit{small} collection of $n$ independent and identically distributed \textit{labeled} samples $\left\{(X_i, Y_i)\right\}_{i=1}^n$% coming from the joint distribution $\jPtarget\in\mathcal{P}$
, where $X_i\in\RR^p$ is a $p$-dimensional vector of covariates and $Y_i\in\RR^k$ is a $k$-dimensional outcome. We let $(X,Y)$ denote an independent copy of $(X_i,Y_i)$%, that is $\data^L\sim\jPtarget$. We can express the joint distribution $\jPtarget$ as the product $= \mPtarget \cPtarget$, where $\mPtarget$ denotes the marginal distribution of $X$ and $\cPtarget$ denotes the conditional distribution of $Y$ given $X$
. We also assume to have access to a further \textit{large} collection of $N$ \textit{unlabeled} data samples $\left\{X_i \right\}_{i=1}^N$. As before, we let $X$ denote an independent copy of $X_i$. Following traditional nomenclature from semiparametric statistics literature, we recast our problem in a missing data framework. We denote the \textit{observed data} as $\left\{ \data_i =(X_i,R_{i},R_{i}Y_i) \right\}_{i=1}^{n+N}$, where $R_{i}$ is a binary indicator that indicates whether observation $\data_i$ is labeled %, i.e.~it comes from $\jPtarget$
($R_{i}=1$), or unlabeled ($R_{i}=0$). We let $\data = (X,R,RY)$ denote an independent copy of $\data_i = (X_i,R_{i},R_{i}Y_i)$. We denote the data-generating distribution as $\jPtarget\in\mathcal{P}$, where $\mathcal{P}$ is the set of distributions induced by a nonparametric model, and thus we write $\data\sim\jPtarget$. For theoretical convenience, we also define the \textit{full data} $\data^F = (X,Y)$, that is, the data that we would observe if there were no missingness mechanisms in place. 

The potential discrepancy between the labeled and unlabeled datasets due to \textit{distribution shift} naturally leads us to adopt a \textit{missing at random} (MAR) labeling mechanism. Moreover, a distinctive aspect of our setting is that the proportion of labeled data $n$ may become asymptotically negligible compared to the volume of unlabeled data $N$. Specifically, we consider the extreme regime in which $\lim_{n,N\to\infty} n / (n+N) = 0$. This scenario effectively violates the standard assumption of \textit{positivity}, typically required in classical missing data analyses, where the ratio $n / (n+N)$ is always bounded away from zero. By defining the \textit{propensity score} as $\pitarget(x) = \PP{R=1\mid X=x}$, we can formally state these assumptions below.

\begin{assumption}[\ds{} setting]
\label{ass:decMAR}
    Let the following assumptions hold:
    \begin{enumerate}[label=\textbf{\alph*.}]
    \item \textbf{Missing at random.} $R \indep Y \mid X$. 
    \item \textbf{Decaying overlap.} Given $\an^{-1} = \EE{{\pitarget}^{-1}(X)}<\infty$ for every $n$ and $N$, it holds that $\an>0$, with potentially $\an\to0$ as $n,N\to\infty$.
\end{enumerate}
    \end{assumption}

\begin{remark}
    Assumption \textbf{a} corresponds to the well-known \textit{ignorability} condition, commonly invoked in both the missing data and causal inference literature \citep{tsiatis2006semiparametric, kennedy2024semiparametric}. Unlike classical semi-supervised settings that assume labels are missing completely at random (MCAR), our setup allows for selection bias, or distribution shift, in the labeled data. As a result, population-level parameters cannot be consistently estimated using the labeled sample alone. The goal of our work is not merely to enhance supervised estimators using additional unlabeled data (which are no longer unbiased under MAR), but rather to construct a theoretically grounded estimator of general functionals of the full data-generating process. Assumption \textbf{b}, inspired by the asymptotic framework of \citet{zhang2023double}, formalizes the regime under which our theoretical guarantees are derived. In contrast to much of the existing literature, which assumes strong overlap (i.e., a uniformly bounded away from zero labeling propensity), we allow for overlap to decay with the sample size, requiring new tools to characterize convergence rates in this more general setting.
\end{remark}

\begin{remark}[Triangular arrays] 
\label{rem:triangular}
The \textit{propensity score} $\pitarget(x) = \PP{R=1\mid X=x}$ plays a central role in our theoretical development under decaying overlap. In particular, Assumption~\textbf{b} imposes structural constraints on the behavior of $\pitarget$, by allowing it to depend on the sample sizes $n$ and $N$. A canonical example of the regime of decaying overlap captured by Assumption~\ref{ass:decMAR} is the MCAR case, in which the propensity score reduces to $\pitarget(x) = n/(n+N)$ for all $x\in\RR^p$, implying that $\an = n / (n+N)$ in this special case. Hence, both $R_{i}$ and $\pitarget(X_i)$ form \textit{triangular arrays} over $n,N$ and $i$, but we suppress the dependence of $R_i$ on $n,N$ for notational simplicity. Notice that we are also implicitly dropping the dependence on $n,N$ even from $\jPtarget$, which, under Assumption~\ref{ass:decMAR}, instead depends on the sample sizes as $\jPtarget = \mPtarget \cPRtarget \cPtarget$ and $\cPRtarget$ changes with the sample size. This notational subtlety will be innocuous until Section~\ref{sec:efficiency}, and we defer comments to its implications there. Finally, Supplementary Section~\ref{supp_sec:dec_models} provides some examples of statistical models where the decaying construction arises naturally.
\end{remark}

Throughout this paper, we focus on the identification and estimation of a target quantity $\thetatarget\in\RR^q$ that is defined as the Euclidean functional solving $\thetatarget = \theta(\jPtarget)$, with $\theta\,:\,\mathcal{P} \to \RR^q$.
We restrict our attention to target parameters that could be estimated using \textit{regular} and \textit{asymptotically linear} (RAL) \textit{full-data} estimators, that is, targets that admit the expansion
\begin{equation}
\label{eq:ral}
    \sqrt{n+N} \left(\thetaf - \thetatarget \right) = (n+N)^{-1/2} \sum_{i=1}^{n+N} \finfluence{\data_i^F;\thetatarget} + o_\mathbbmss{P}(1)\,,
\end{equation}
for some regular and asymptotically linear \textit{full-data} estimator $\thetaf$, and some function $\finfluence{\data_i^F;\thetatarget}$, referred to as \textit{full-data influence function}, evaluating the contribution of the \textit{full-data} observation $\data_i^F$ to the overall estimator $\thetaf$ \citep{hampel1974influence}. The influence function $\finfluence{\data_i^F;\thetatarget}$ depends on the true parameter $\thetatarget$ and is such that $\EE{\finfluence{\data^F;\thetatarget}} = 0$ and $\VV{\finfluence{\data^F;\thetatarget}}$ is positive definite.
This framework is very general and encompasses M-estimation, Z-estimation, U-statistics, and many estimands in causal inference. Examples of such targets are the outcome mean, the outcome quantiles, least squares coefficients, average treatment effects, and general minimizers of convex loss functions depending on both $X$ and $Y$ \citep{tsiatis2006semiparametric, van2000asymptotic}.

There is a deep connection between regular and asymptotically linear estimators and influence functions -- see Theorem 3.1 in \citet{tsiatis2006semiparametric}. In fact, given a full-data influence function $\finfluence{\data^F;\thetatarget}$, one can recover the associated estimator $\hat{\theta}^F$ by solving the estimating equation
\begin{equation}
\label{eq:full_est_eq}
    \sum_{i=1}^{n+N} \finfluence{\data^F_i;\thetatarget} = 0\,.
\end{equation}
Standard semiparametric theory guarantees that, under mild conditions, the classical estimator $\thetaf$ characterized in Equation~\ref{eq:ral} and Equation~\ref{eq:full_est_eq} is consistent for $\thetatarget$ and asymptotically normal. This follows from well-known properties of influence functions and an application of the Central Limit Theorem. The asymptotic variance of the estimator $\thetaf$ is given by the variance of its influence function divided by the sample size, i.e.~$\VV{\finfluence{\data^F;\thetatarget}} / (n+N)$. Notice that if we do not make assumptions on the distribution of the data-generating process, we are left with a problem that is inherently nonparametric. In this case, semiparametric theory shows that each functional is associated with one and only one full-data influence function, and thus it is also the efficient one (i.e. the one delivering the smallest asymptotic variance). 

While convenient from a theoretical standpoint, the full-data estimator $\thetaf$ is of little practical relevance, as it relies on outcome labels that are missing in the unlabeled portion of the data. A natural first approximation is to construct an estimator using only the labeled sample. However, under the missing at random (MAR) assumption, such an estimator is generally biased due to distribution shifts in the labeling mechanism. Our objective is therefore to understand \textit{how} to construct an unbiased and asymptotically normal estimator for the target parameter $\thetatarget$, using the observed data $\data=(X,R,RY)$ in the presence of decaying overlap. We then develop a general and flexible strategy for constructing estimators that remain valid under this challenging regime.

\subsection{Notation}

Before proceeding, we introduce some more notation. We denote \textit{weak convergence} of a random object $Z_n$ to a limit $Z$ as $Z_n\dto Z$. Similarly, we denote \textit{convergence in $\mathbbmss{L}^p$ norm} as $Z_n\Lto{p}Z$. Given a random object $Z_n$, we denote its variance as $\VV{Z_n}$. If $Z_n$ is $q$-dimensional, then $\VV{Z_n}$ must be understood as a $q \times q$ positive definite variance-covariance matrix. We also define the $\mathbbmss{L}_p$ norm of a $q$-dimensional vector $Z$ as $\norm{Z}{p} = \left(\sum_{j=1}^q Z_j^p\right)^{1/p}$. Given a $q\times q$ matrix $A$, $\mineig{A}$ and $\maxeig{A}$ indicate its smallest and largest eigenvalues, respectively. The $q\times q$ identity matrix is denoted as $I_q$. $0_q$ denotes a vector of size $q$ whose entries are all zeros. For any two sequences $a_n$ and $b_n$, we write $a_n \asymp b_n$ if there exist constants $c,C,n_0$ such that $cb_n < a_n < Cb_n$ for all $n>n_0$.

\section{Learning in the \ds{} setting}
\label{sec:ext_models}

\subsection{Known propensity score model}
In this Section, we develop results for estimation and inference in the \ds{} setting under the assumption that the propensity score $\pitarget$ is known. This setup is particularly relevant in experimental designs where the labeling mechanism is controlled and fully specified, but resources for treatment administration are scarce.

When outcome labels are missing at random, the full-data estimator defined in Equation~\ref{eq:ral} is not directly computable, as it relies on unobserved outcomes in the unlabeled sample. To properly account for the MAR mechanism, we derive the \textit{observed-data influence function}, which is the projection of the full-data influence function onto the observed data model \citep{tsiatis2006semiparametric}. Under the \ds{} setting in Assumption~\ref{ass:decMAR}, the observed-data influence function for the target $\thetatarget$ is given by:
\begin{equation}
    \label{eq:if}
    \begin{split}
        \influence{\data;\thetatarget} &= \frac{R}{\pitarget(X)} \finfluence{\data;\thetatarget} - \frac{R-\pitarget(X)}{\pitarget(X)} \EE{\finfluence{\data;\thetatarget}\mid X} \\
        &= \EE{\finfluence{\data;\thetatarget}\mid X} + \frac{R}{\pitarget(X)} \left( \finfluence{\data;\thetatarget} - \EE{\finfluence{\data;\thetatarget}\mid X} \right)\,,
    \end{split}
\end{equation}
where $\finfluence{\data^F;\thetatarget}$ is any valid full-data influence function, and $\pitarget : \RR^p \to (0,1)$ denotes the propensity score, defined as $\pitarget(x) = \PP{R = 1 \mid X = x}$. The derivation of this result is detailed in Supplementary Section~\ref{supp_sec:proofs}.

In Equation~\ref{eq:if}, the only unknown component is the \textit{nuisance function} $\mutarget(X) = \EE{\finfluence{\data;\thetatarget}\mid X}$, which we refer to as the \textit{projection model}. In fact, for the moment, we assume that the propensity score is known, and this amounts to assume that $\hat\pi=\pitarget$. To make the dependence on nuisance functions explicit, we denote the influence function evaluated at $\hat{\mu}$ and $\hat\pi$ as $\influence{\data;\thetatarget;\hat{\mu};\hat\pi}$. In particular, the observed-data influence function where $\hat{\mu} = \mutarget$ and $\hat{\pi} = \pitarget$ is denoted as $\influence{\data;\thetatarget;\mutarget;\pitarget}$. 

Since $\mutarget$ must be estimated from the data, we employ \textit{cross-fitting} to avoid restrictive Donsker conditions and to retain full-sample efficiency \citep{bickel1988estimating, chernozhukov2018double, robins2008higher, schick1986asymptotically,zheng2010asymptotic}. Cross-fitting works as follows. We first randomly split the observations $\{\data_1,\dots,\data_{n+N}\}$ into $J$ disjoint folds (without loss of generality, we assume that the number of observations $n+N$ is divisible by $J$). For each $j=1,\ldots, J$ we form $\hat{\mathbbmss{P}}^{[-j]}$ with all but the $j$-th fold, and we denote the set of indices of the observations belonging to the $j$-th fold as $\mathcal{K}^{[j]}\subseteq \{1,\dots,n+N\}$. Then, we learn $\hat{\mu}^{[-j]}$ on $\hat{\mathbbmss{P}}^{[-j]}$. For most supervised methods, learning $\hat{\mu}^{[-j]}$ on $\hat{\mathbbmss{P}}^{[-j]}$ will involve using only labeled observations within $\hat{\mathbbmss{P}}^{[-j]}$. Then, we compute the estimator $\hat{\theta}_{\hat\mu}$ by solving the estimating equation
\begin{equation}
\label{eq:est_eq_piknown}
    \sum_{j=1}^J \sum_{i\in\mathcal{K}^{[j]}} \influence{\data_i;\thetatarget;\hat{\mu}^{[-j]};\pitarget} = 0\,.
\end{equation}
Estimating the true projection function $\mutarget$ can be challenging in practice, especially when the number of labeled samples is limited. However, due to \textit{double robustness}, the estimator derived from Equation~\ref{eq:est_eq_piknown} remains consistent and asymptotically normal even when $\hat\mu$ is misspecified, as long as it converges to some limit $\Bar{\mu}\neq\mutarget$ (in a sense we will make precise shortly). The cost of such misspecification is an increase in asymptotic variance relative to the case where $\hat\mu = \mutarget$. 

We can now state some regularity conditions needed to show the theoretical properties of our estimators.
\begin{assumption}[Inference]
\label{ass:reg}
Let the number of cross-fitting folds be fixed at $J$, and assume that: 
\begin{enumerate}[label=\textbf{\alph*.}]
    \item There exist $\Bar{\mu}$ and $\pibar$ such that, for each $j\in\{1,\dots,J\}$, one has \begin{equation}
        \influence{\data;\thetatarget;\hat{\mu}^{[-j]};\hat{\pi}^{[-j]}}\Lto{2} \influence{\data;\thetatarget;\Bar{\mu};\pibar}\,.
    \end{equation}
    \item For each $j\in\{1,\dots,J\}$, one has \begin{equation}
        (n+N)^{1/2}\an^{1/2} \sum_{j=1}^J \norm{\eta^{[j]}}{2} = o_\mathbbmss{P}(1)\,,
    \end{equation} 
    where the \textit{remainder term} $\eta^{[j]}$ is defined as $\eta^{[j]} = \theta\left(\hat{\mathbbmss{P}}^{[-j]}\right) - \theta(\jPtarget) + \EE{\influence{\data;\thetatarget;\hat{\mu}^{[-j]};\hat{\pi}^{[-j]}}}$.
    \item $\mineig{\VV{\influence{\data;\thetatarget;\Bar{\mu};\pibar}}} \asymp \an^{-1}$ and $\maxeig{\VV{\influence{\data;\thetatarget;\Bar{\mu};\pibar}}} \asymp \an^{-1}$.
    \item For every $\varepsilon>0$, it holds that
    \begin{equation}
    \EE{\an\norm{\influence{\data;\thetatarget;\Bar{\mu};\pibar}}{2}^2 \onea{\norm{\influence{\data;\thetatarget;\Bar{\mu};\pibar}}{2} > \varepsilon \sqrt{\frac{n+N}{\an}}}} \to 0\quad\text{as}\,n,N\to\infty\,.
    \end{equation}
\end{enumerate}
\end{assumption}
\begin{remark}
    Fixing the number of cross-fitting folds prevents undesirable asymptotic behavior and is standard in modern semiparametric literature. Assumptions~\textbf{a} and \textbf{b} are also conventional; see, for example, \citet{kennedy2024semiparametric}. Assumption~\textbf{a} is used to control the empirical process fluctuations, while Assumption~\textbf{b} ensures that the remainder term in the \textit{von Mises expansion} is negligible. Assumptions \textbf{c} and \textbf{d} restrict the class of admissible targets within our framework, while still allowing for a broad and general set of results. Specifically, Assumption~\textbf{c} imposes a condition on the scaling of the asymptotic variance of the influence function, whereas Assumption~\textbf{d} introduces a technical requirement on the rate at which the observed-data influence function decays with the sample sizes $n$ and $N$. This latter condition is motivated by the classical Lindeberg condition, which is commonly assumed in proving Central Limit Theorems (CLTs), and is in fact close to being necessary for asymptotic normality. Our formulation generalizes the analogous condition used by \citet{zhang2023double} to accommodate a wider class of targets. In particular, we recover Assumption 3.2 of \citet{zhang2023double} by noticing that, for a univariate target (and thus a unidimensional influence function), the norm in Assumption~\textbf{d} reduces to the absolute value of the influence function itself.
\end{remark}

\begin{remark}[Lindeberg condition]
    At first glance, Assumption~\textbf{d} may appear benign: as $n, N \to \infty$, the term $\an \to 0$, which might suggest that the Lindeberg-type condition becomes increasingly easy to satisfy. However, this is not the case. A key subtlety lies in the fact that the observed-data influence function $\influence{\data;\thetatarget}$ is itself a function of $n$ and $N$, particularly through its dependence on the labeling mechanism and the propensity scores. This introduces a non-trivial coupling between the behavior of the influence function and the rate $\an$. As $\an \to 0$, the weights applied to individual observations in the influence function increase, especially in regions of the covariate space where the propensity score is small. Consequently, while the factor $\an$ in the condition vanishes, the tails of the influence function may become heavier due to extreme reweighting. This interplay can make the Lindeberg condition more delicate -- not less. In this sense, the condition acts as a technical safeguard that balances asymptotic variance control with tail robustness under decaying overlap.
\end{remark}

We are now ready to provide our first result, characterizing the asymptotic behavior of our estimators with known propensity score under the \ds{} setting.

\begin{theorem}[Consistency and asymptotic normality, known $\pitarget$]
\label{th:ansi_knownpi}
Assume $(n+N)\an\to\infty$ and that $\pitarget$ is known. Under Assumption~\ref{ass:decMAR} (\ds{} setting) and Assumptions~\ref{ass:reg} (inference) \textbf{a}, \textbf{c}, and \textbf{d} , one has
\begin{equation}
    (n+N)^{1/2} \an^{1/2} \norm{\hat\theta_{\hat\mu} - \thetatarget}{2} = O_\mathbbmss{P}(1)\,,
\end{equation}
and
\begin{equation}
    (n+N)^{1/2} \VV{\influence{\data;\thetatarget;\hat{\mu};\pitarget}}^{-1/2} \left(\hat\theta_{\hat\mu} - \thetatarget \right) \dto \Normal{0}{I_q}\,.
\end{equation}
\end{theorem}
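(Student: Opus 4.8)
The plan is to view $\hat\theta_{\hat\mu}$ as the root of the cross-fitted estimating equation (Equation~\ref{eq:est_eq_piknown}) and to linearize it around $\thetatarget$. Writing $M = n+N$ and $r_{n,N} = (n+N)^{1/2}\an^{1/2}$, a $Z$-estimation / mean-value expansion in $\theta$ (valid once the consistency shown below is in hand) produces the stochastic expansion
\[
\hat\theta_{\hat\mu} - \thetatarget = \frac{1}{M}\sum_{i=1}^{M}\influence{\data_i;\thetatarget;\Bar{\mu};\pitarget} + \mathcal{E}_{n,N} + \mathcal{B}_{n,N} + o_\mathbbmss{P}(r_{n,N}^{-1})\,,
\]
where $\mathcal{E}_{n,N}$ collects the fluctuation due to estimating $\mutarget$ by $\hat{\mu}^{[-j]}$ and $\mathcal{B}_{n,N}$ is the associated second-order bias. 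A key structural fact is that the Jacobian of the observed-data estimating function stays bounded: under the MAR identity $\EE{R\mid X,Y}=\pitarget(X)$ one has $\EE{(R/\pitarget(X))\,\partial_\theta\finfluence{\data^F;\thetatarget}}=\EE{\partial_\theta\finfluence{\data^F;\thetatarget}}$, which the RAL calibration (Equation~\ref{eq:ral}) normalizes to $-I_q$, so the inverse-Jacobian factor is asymptotically the identity and does not contaminate the $\VV{\cdot}^{-1/2}$ normalization. This is exactly what produces the slow rate: the \enquote{bread} is $O(1)$ while the \enquote{meat} $\VV{\influence{\data;\thetatarget;\Bar{\mu};\pitarget}}$ grows like $\an^{-1}$.

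Next I would dispose of the two remainders. For the bias, observe that with $\pitarget$ known and correct the MAR identity gives $\EE{\influence{\data;\thetatarget;\mu;\pitarget}} = \EE{\finfluence{\data^F;\thetatarget}} = 0$ for \emph{every} projection $\mu$; hence $\mathcal{B}_{n,N}$, being built from $\EE{\influence{\data;\thetatarget;\hat{\mu}^{[-j]};\pitarget}\mid\hat{\mathbbmss{P}}^{[-j]}}=0$, vanishes identically. This is precisely why the general remainder condition of Assumption~\ref{ass:reg}\textbf{b} is not needed in the known-$\pitarget$ case: the second-order term $\eta^{[j]}$ is driven by the product of the propensity and projection errors, which is null once $\hat{\pi}^{[-j]} = \pitarget$. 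For the fluctuation $\mathcal{E}_{n,N}$, cross-fitting renders the fold-$j$ summands conditionally i.i.d.\ given $\hat{\mathbbmss{P}}^{[-j]}$, so no Donsker condition is required; using the identity $\influence{\data;\thetatarget;\hat{\mu}^{[-j]};\pitarget} - \influence{\data;\thetatarget;\Bar{\mu};\pitarget} = (\hat{\mu}^{[-j]} - \Bar{\mu})(X)\bigl(1 - R/\pitarget(X)\bigr)$, the conditional variance of $r_{n,N}\mathcal{E}_{n,N}$ is of order $\an\,\EE{\norm{\influence{\data;\thetatarget;\hat{\mu}^{[-j]};\pitarget} - \influence{\data;\thetatarget;\Bar{\mu};\pitarget}}{2}^2 \mid \hat{\mathbbmss{P}}^{[-j]}}$, which tends to zero by Assumption~\ref{ass:reg}\textbf{a} together with $\an\le 1$. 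Hence $r_{n,N}\mathcal{E}_{n,N} = o_\mathbbmss{P}(1)$.

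These two steps reduce the problem to the leading average $M^{-1}\sum_{i}\influence{\data_i;\thetatarget;\Bar{\mu};\pitarget}$, which has mean zero and covariance $M^{-1}\VV{\influence{\data;\thetatarget;\Bar{\mu};\pitarget}}$ with operator norm $\asymp M^{-1}\an^{-1} = r_{n,N}^{-2}$ by Assumption~\ref{ass:reg}\textbf{c}. A Chebyshev bound then gives $r_{n,N}\norm{\hat\theta_{\hat\mu} - \thetatarget}{2} = O_\mathbbmss{P}(1)$, which is the first display of the theorem; the condition $(n+N)\an\to\infty$ guarantees $r_{n,N}\to\infty$, which both secures the consistency used to justify the linearization and makes the effective sample size diverge.

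For asymptotic normality I would apply a multivariate Lindeberg--Feller central limit theorem to the triangular array $\{\an^{1/2}\influence{\data_i;\thetatarget;\Bar{\mu};\pitarget}\}_{i\le M}$, whose per-row covariance $\an\VV{\influence{\data;\thetatarget;\Bar{\mu};\pitarget}}$ has eigenvalues bounded away from $0$ and $\infty$ by Assumption~\ref{ass:reg}\textbf{c}. Passing to scalar projections via the Cram\'er--Wold device, the Lindeberg condition required is exactly Assumption~\ref{ass:reg}\textbf{d}, yielding $\VV{\influence{\data;\thetatarget;\Bar{\mu};\pitarget}}^{-1/2} M^{-1/2}\sum_{i}\influence{\data_i;\thetatarget;\Bar{\mu};\pitarget}\dto\Normal{0}{I_q}$. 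Combining with the expansion and replacing the limiting variance by its plug-in $\VV{\influence{\data;\thetatarget;\hat{\mu};\pitarget}}$ (ratio-consistent by Assumptions~\ref{ass:reg}\textbf{a} and \textbf{c}) gives the second display via Slutsky. I expect the genuine obstacle to be this triangular-array CLT: because $R$, $\pitarget$, and hence $\influence{\data;\thetatarget;\Bar{\mu};\pitarget}$ all vary with $n,N$, no fixed-distribution CLT applies, and inverse-propensity reweighting inflates the tails of the influence function in the low-overlap regions at the very rate $\an^{-1}$ by which we must rescale. Reconciling the Lindeberg control of Assumption~\ref{ass:reg}\textbf{d} with the variance scaling of Assumption~\ref{ass:reg}\textbf{c} -- the coupling highlighted in the Lindeberg-condition remark -- is the step that genuinely departs from classical semiparametric theory.
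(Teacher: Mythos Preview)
Your proposal is correct and follows essentially the same route as the paper: both reduce $\hat\theta_{\hat\mu}-\thetatarget$ to a leading influence-function average plus an empirical-process fluctuation plus a remainder, note that the remainder vanishes identically when $\hat\pi=\pitarget$ (so Assumption~\ref{ass:reg}\textbf{b} is not needed), control the fluctuation via cross-fitting and Assumption~\ref{ass:reg}\textbf{a}, and obtain the CLT from Lindeberg--Feller using the eigenvalue scaling in~\textbf{c} and the tail condition in~\textbf{d}. The only cosmetic differences are that the paper packages the linearization as a Von~Mises expansion rather than a $Z$-estimator Jacobian argument, cites an external lemma for the empirical-process bound where you give a direct conditional-variance calculation, and leaves implicit the ratio-consistency step replacing $\Bar{\mu}$ by $\hat{\mu}$ in the variance normalizer that you state explicitly.
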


\begin{remark}[Effective sample size]
\label{rem:effective_ss}
    The quantity $(n+N)\an$ can be interpreted as the \emph{effective sample size} that governs the convergence rate of the estimator $\hat\theta_{\hat\mu}$ to the target parameter $\thetatarget$. The factor $\an$ reflects the severity of overlap decay, by capturing both how concentrated the propensity score $\pitarget$ is around zero and the relative size of the labeled dataset $n$. The greater the concentration of $\pitarget$ near zero, the slower the rate of convergence. In the special case where labels are missing completely at random (MCAR), we have $\pitarget(x) = n / (n+N)$ for all $x \in \mathbb{R}^p$, which implies $\an = n / (n+N)$ and thus an effective sample size equal to $n$.  In this regime, the rate of convergence reduces to the standard root-$n$ rate typically achieved when using labeled data alone. Our findings align with asymptotic results established in \citet{angelopoulos2023ppi++} and \citet{xu2025unified}.
\end{remark}

\subsection{Unknown propensity score model}
In general observational studies, both the projection model $\mutarget$ and the propensity score $\pitarget$ are unknown and need to be estimated. We employ again \textit{cross-fitting}. We randomly split the observations $\{\data_1,\dots,\data_{n+N}\}$ into $J$ disjoint folds. For each $j=1,\ldots, J$ we form $\hat{\mathbbmss{P}}^{[-j]}$ with all but the $j$-th fold, and we denote the set of indices of the observations belonging to the $j$-th fold as $\mathcal{K}^{[j]}\subseteq \{1,\dots,n+N\}$. Then, we learn $\hat{\mu}^{[-j]}$ and $\hat{\pi}^{[-j]}$ on $\hat{\mathbbmss{P}}^{[-j]}$, and compute the estimator $\hat{\theta}_{\hat\mu;\hat\pi}$ by solving the estimating equation
\begin{equation}
\label{eq:est_eq_piunknown}
    \sum_{j=1}^J \sum_{i\in\mathcal{K}^{[j]}} \influence{\data_i;\thetatarget;\hat{\mu}^{[-j]};\hat{\pi}^{[-j]}} = 0\,.
\end{equation} 
%We denote the potentially misspecified limits of $\hat\mu$ and $\hat\pi$ as $\Bar{\mu}$ and $\pibar$, respectively.  

We can now extend the results of Theorem~\ref{th:ansi_knownpi} to the more general case where all nuisance functions are estimated from the data.

\begin{theorem}[Consistency and asymptotic normality, unknown nuisances]
\label{th:ansi_unknownpi}
Assume $(n+N)\an\to\infty$. Under Assumption~\ref{ass:decMAR} (\ds{} setting) and Assumptions~\ref{ass:reg} (inference) \textbf{a}, \textbf{b}, \textbf{c}, and \textbf{d}, one has 
\begin{equation}
    (n+N)^{1/2} \an^{1/2} \norm{\hat\theta_{\hat\mu;\hat\pi} - \thetatarget}{2} = O_\mathbbmss{P}(1)\,,
\end{equation}
and
\begin{equation}
    (n+N)^{1/2} \VV{\influence{\data;\thetatarget;\hat{\mu};\hat\pi}}^{-1/2} \left(\hat\theta_{\hat\mu;\hat\pi} - \thetatarget \right) \dto \Normal{0}{I_q}\,.
\end{equation}
\end{theorem}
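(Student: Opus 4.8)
The plan is to treat Theorem~\ref{th:ansi_unknownpi} as an extension of Theorem~\ref{th:ansi_knownpi}: the central-limit and empirical-process machinery is identical, and the only genuinely new content lies in the \emph{drift} term, where the propensity score is now estimated alongside the projection model. Since $\hat\theta_{\hat\mu;\hat\pi}$ solves the cross-fitted estimating equation in Equation~\ref{eq:est_eq_piunknown}, I would first establish consistency and then invoke the standard $Z$-estimation linearization. Using that $\theta\mapsto\EE{\influence{\data;\theta;\Bar{\mu};\Bar{\pi}}}$ is differentiable with nonsingular derivative at $\thetatarget$ (normalized to $-I_q$, the defining property of an influence function), one obtains
\begin{equation}
\hat\theta_{\hat\mu;\hat\pi}-\thetatarget = \frac{1}{n+N}\sum_{j=1}^J\sum_{i\in\mathbbmss{P}_{n+N}^{[j]}} \influence{\data_i;\thetatarget;\hat\mu^{[-j]};\hat\pi^{[-j]}} + o_\mathbbmss{P}\left(((n+N)\an)^{-1/2}\right).
\end{equation}
Everything then reduces to the stochastic expansion of the right-hand average, which I would split fold-by-fold into three pieces: (i) an \emph{oracle} term $\frac{1}{n+N}\sum_i \influence{\data_i;\thetatarget;\Bar{\mu};\Bar{\pi}}$ built from the limiting nuisances; (ii) a \emph{centered empirical-process} term collecting $\influence{\data_i;\thetatarget;\hat\mu^{[-j]};\hat\pi^{[-j]}}-\influence{\data_i;\thetatarget;\Bar{\mu};\Bar{\pi}}$ minus its conditional mean; and (iii) a \emph{von Mises drift} term equal to the fold-average of $\EE{\influence{\data;\thetatarget;\hat\mu^{[-j]};\hat\pi^{[-j]}}\mid\hat{\mathbbmss{P}}^{[-j]}}$.

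For piece (ii), cross-fitting is essential: conditionally on the out-of-fold sample $\hat{\mathbbmss{P}}^{[-j]}$, the summands over $i\in\mathbbmss{P}_{n+N}^{[j]}$ are i.i.d.\ and mean-zero, so I would bound the $\an^{1/2}$-scaled conditional variance by $\an\,\EE{\norm{\influence{\data;\thetatarget;\hat\mu^{[-j]};\hat\pi^{[-j]}}-\influence{\data;\thetatarget;\Bar{\mu};\Bar{\pi}}}{2}^2\mid\hat{\mathbbmss{P}}^{[-j]}}$, which is $o_\mathbbmss{P}(1)$ by the $\mathbbmss{L}^2$ convergence of Assumption~\ref{ass:reg}\textbf{a} together with $\an\le 1$; a conditional Chebyshev argument and the fixed number of folds then make (ii) negligible at the $(n+N)^{1/2}\an^{1/2}$ scale. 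The oracle term (i) is handled exactly as in the known-$\pitarget$ proof of Theorem~\ref{th:ansi_knownpi}: it is a row-wise i.i.d.\ triangular-array sum whose per-observation variance scales like $\an^{-1}$ by Assumption~\ref{ass:reg}\textbf{c}, so $\an^{1/2}$ is the correct normalization, and the Lindeberg condition of Assumption~\ref{ass:reg}\textbf{d} delivers, via the Lindeberg--Feller CLT and the Cram\'er--Wold device, $(n+N)^{1/2}\VV{\influence{\data;\thetatarget;\Bar{\mu};\Bar{\pi}}}^{-1/2}\cdot\frac{1}{n+N}\sum_i\influence{\data_i;\thetatarget;\Bar{\mu};\Bar{\pi}}\dto\Normal{0}{I_q}$.

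The main obstacle, and the only step that differs substantively from Theorem~\ref{th:ansi_knownpi}, is the drift term (iii), where now \emph{both} nuisances are estimated. Here I would exploit the double-robustness (Neyman-orthogonality) of the observed-data influence function of Proposition~\ref{prop:obs-if}: the map $(\mu,\pi)\mapsto\EE{\influence{\data;\thetatarget;\mu;\pi}}$ vanishes at $(\mutarget,\pitarget)$ and has vanishing first-order Gateaux derivatives there, so its first-order parts cancel and the conditional bias of $\hat\theta_{\hat\mu;\hat\pi}$ is exactly the second-order von Mises remainder $\eta^{[j]}$ defined in Assumption~\ref{ass:reg}\textbf{b}. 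Concretely, the AIPW structure of $\influence{\data;\thetatarget}$ yields a remainder of the product form $\EE{(\hat\mu^{[-j]}-\mutarget)(\hat\pi^{[-j]}-\pitarget)/\hat\pi^{[-j]}}$, which is genuinely second-order in the joint nuisance error. Assumption~\ref{ass:reg}\textbf{b} states precisely that $(n+N)^{1/2}\an^{1/2}\sum_j\norm{\eta^{[j]}}{2}=o_\mathbbmss{P}(1)$, so (iii) vanishes at the target scale. The delicate point — and the reason the $\pi$-estimation step is nontrivial rather than a mechanical repetition — is that under decaying overlap the weight $1/\hat\pi^{[-j]}$ blows up precisely where $\pitarget$ is small, so the product must be quantified \emph{jointly} with the $\an$ factor; double robustness is exactly what prevents the $\pi$-error from entering at first order and thereby dominating.

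Assembling the three pieces, $(n+N)^{1/2}\an^{1/2}(\hat\theta_{\hat\mu;\hat\pi}-\thetatarget)$ equals the normalized oracle term plus $o_\mathbbmss{P}(1)$. Since that term is $O_\mathbbmss{P}(1)$ with the variance scaling of Assumption~\ref{ass:reg}\textbf{c}, the rate conclusion $(n+N)^{1/2}\an^{1/2}\norm{\hat\theta_{\hat\mu;\hat\pi}-\thetatarget}{2}=O_\mathbbmss{P}(1)$ follows, using $(n+N)\an\to\infty$ to guarantee a diverging effective sample size. Finally, replacing $\VV{\influence{\data;\thetatarget;\Bar{\mu};\Bar{\pi}}}$ by the feasible $\VV{\influence{\data;\thetatarget;\hat\mu;\hat\pi}}$ (justified by Assumption~\ref{ass:reg}\textbf{a} and a Slutsky argument) converts the oracle CLT into the stated standardized limit $(n+N)^{1/2}\VV{\influence{\data;\thetatarget;\hat\mu;\hat\pi}}^{-1/2}(\hat\theta_{\hat\mu;\hat\pi}-\thetatarget)\dto\Normal{0}{I_q}$.
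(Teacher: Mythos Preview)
Your proposal is correct and follows essentially the same route as the paper: the same three-term von Mises decomposition into an oracle influence-function term (handled via Assumption~\ref{ass:reg}\textbf{c} and the Lindeberg--Feller CLT under Assumption~\ref{ass:reg}\textbf{d}), a cross-fitted empirical-process term (controlled by Assumption~\ref{ass:reg}\textbf{a}), and a remainder term disposed of by Assumption~\ref{ass:reg}\textbf{b}. You add useful detail that the paper leaves implicit --- the $Z$-estimation linearization that connects the estimating-equation solution to the von Mises expansion, the explicit product form of the second-order remainder underlying Assumption~\ref{ass:reg}\textbf{b}, and the Slutsky step passing from $\VV{\influence{\data;\thetatarget;\Bar{\mu};\Bar{\pi}}}$ to the feasible $\VV{\influence{\data;\thetatarget;\hat\mu;\hat\pi}}$ in the standardized limit --- but these are elaborations of the same argument rather than a different one.
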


\begin{remark}[Asymptotic normality under misspecification]
When both the projection model and the propensity score are correctly specified, Theorem~\ref{th:ansi_unknownpi} ensures that $\hat\theta_{\hat\mu;\hat\pi}$ is asymptotically normal. The consistency of the estimator relies on the remainder term $\eta$ vanishing asymptotically, as required by Assumption~\ref{ass:reg}, part~\textbf{b}. Algebraically, this remainder is driven by the product of the errors of the nuisance functions, scaling with $\norm{\hat{\pi} - \pitarget}{2}\norm{\hat{\mu} - \mutarget}{2}$. Therefore, Assumption \textbf{b} implicitly requires that at least one of the two nuisance models is correctly specified (\textit{double robustness}). If both models are misspecified, this condition generally fails, and consistency is lost. Furthermore, if only one component is correctly specified, achieving asymptotic normality is more demanding, as the correctly specified nuisance function must converge at the faster rate of $(n+N)^{-1/2} \an^{-1/2}$, which is typically attainable \textit{only} when its population counterpart belongs to a low-dimensional, parametric model class. Therefore, in complex applications, the relaxation of Assumption \textbf{b} may inhibit asymptotic normality and preserve only consistency. 
\end{remark}

\subsection{Prediction-powered inference in the \ds{} setting}
In this Section, we extend the framework developed so far to incorporate additional information from an off-the-shelf predictive model $f$ that maps the covariate space to the outcome space. We do not impose assumptions on the quality of $f$, treating it as a generic black-box machine learning prediction model. This extension connects naturally to the \textit{prediction-powered inference} (\texttt{PPI}) framework introduced by \citet{angelopoulos2023prediction}, but significantly broadens its applicability in real-world scientific settings.

In particular, while the original \texttt{PPI} framework assumes that labels are missing completely at random (MCAR), our extension relaxes this to the more general missing at random (MAR) setting, where the labeling mechanism may depend on the covariates. Additionally, while the original \texttt{PPI} framework is limited to $M$-estimation problems, our approach supports any target functional satisfying Equation~\ref{eq:ral}.
As a result, our formulation enables the use of predictive models within \texttt{PPI} even under covariate-dependent labeling, allowing the benefits of modern machine learning to be realized in more realistic and challenging scenarios. Notably, standard \texttt{PPI} methods are not valid under MAR.

As in previous Sections, we recast the problem in the language of missing data. The observed data are now given by $\data = (X, f(X), R, RY)$, with $n + N$ realizations. In this context, the observed-data influence function for \texttt{PPI} takes the form:
\begin{equation}
\label{eq:ppiif}
   \ppiinfluence{\data;\thetatarget} = \frac{R}{\pitarget(X, f(X))} \finfluence{\data;\thetatarget} - \frac{R-\pitarget(X,f(X))}{\pitarget(X,f(X))} \EE{\finfluence{\data;\thetatarget}\mid X,f(X)}\,.
\end{equation}

All results presented in the previous Sections extend naturally to this setting. The key distinction is that both the projection model and the propensity score can now be conditional on the additional covariate $f(X)$, which is always observed and may improve the quality of both nuisance estimates. In particular, we can establish consistency and asymptotic normality for the resulting \texttt{PPI} estimator $\hat\theta^\texttt{PPI}$, obtained by solving the associated estimating equation involving the influence function in Equation~\ref{eq:ppiif}, under the \ds{} setting.

\begin{corollary}[\texttt{PPI} asymptotic normality under \ds{} setting] 
\label{cor:an_ppi} 
Assume $(n+N)\an\to\infty$. Under Assumption~\ref{ass:decMAR} (\ds{} setting) and Assumption~\ref{ass:reg} (inference), one has
\begin{equation}
    (n+N)^{1/2} \an^{1/2} \norm{\hat\theta^\texttt{PPI} - \thetatarget}{2} = O_\mathbbmss{P}(1)\,,
\end{equation}
and
\begin{equation}
    (n+N)^{1/2} \VV{\ppiinfluence{\data;\thetatarget;\hat{\mu};\hat\pi}}^{-1/2} \left(\hat\theta^\texttt{PPI} - \thetatarget \right) \dto \Normal{0}{I_q}\,.
\end{equation}
\end{corollary}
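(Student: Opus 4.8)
The plan is to recognize Corollary~\ref{cor:an_ppi} as a direct application of Theorem~\ref{th:ansi_unknownpi} obtained by augmenting the conditioning covariate. First I would introduce the augmented covariate $\tilde{X} = (X, f(X))$ and observe that, because $f$ is a fixed deterministic map, the $\sigma$-algebra generated by $\tilde{X}$ coincides with that generated by $X$ alone. The observed data $\data = (X, f(X), R, RY)$ then fits exactly into the missing-data template of Section~\ref{sec:setup}, with $\tilde{X}$ playing the role of the covariate, so that the $n+N$ realizations form triangular arrays as before.

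The key verification is that every hypothesis of Theorem~\ref{th:ansi_unknownpi} transfers to the augmented model. The most important point is preservation of the labeling assumption: since $f(X)$ is $\sigma(X)$-measurable, $R \indep Y \mid X$ immediately implies $R \indep Y \mid (X, f(X))$, so Assumption~\ref{ass:decMAR}\textbf{a} holds with $\tilde{X}$. Next I would note that the \texttt{PPI} influence function in Equation~\ref{eq:ppiif} is precisely the observed-data influence function of Proposition~\ref{prop:obs-if} evaluated with $\tilde{X}$ in place of $X$, obtained through the substitutions $\pitarget(X) \mapsto \pitarget(X, f(X))$ and $\EE{\finfluence{\data;\thetatarget} \mid X} \mapsto \EE{\finfluence{\data;\thetatarget} \mid X, f(X)}$. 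Redefining the overlap scaling as $\an^{-1} = \EE{{\pitarget}^{-1}(X, f(X))}$ preserves the decaying-overlap structure of Assumption~\ref{ass:decMAR}\textbf{b}, and the cross-fitted nuisance estimators $\hat\mu^{[-j]}, \hat\pi^{[-j]}$ --- now fitted as functions of $\tilde{X}$ --- are assumed to satisfy the regularity conditions of Assumption~\ref{ass:reg} applied to $\ppiinfluence{\data;\thetatarget}$. With these identifications in hand, Theorem~\ref{th:ansi_unknownpi} applies verbatim and delivers both the rate bound $(n+N)^{1/2}\an^{1/2}\norm{\hat\theta^\texttt{PPI} - \thetatarget}{2} = O_\mathbbmss{P}(1)$ and the stated asymptotic normality.

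The main obstacle is conceptual rather than technical. Because $f(X)$ is deterministic given $X$, the population-level nuisances $\pitarget(X, f(X))$ and $\EE{\finfluence{\data;\thetatarget} \mid X, f(X)}$ coincide with their $X$-only counterparts, so $\ppiinfluence{\data;\thetatarget}$ equals $\influence{\data;\thetatarget}$ at the truth and the limiting variance is unchanged: at the level of the asymptotic theory the corollary carries no new content beyond Theorem~\ref{th:ansi_unknownpi}. The genuine value of the $f(X)$ augmentation lies in estimation --- when $X$ is high-dimensional, modeling the propensity and projection through the low-dimensional, informative summary $f(X)$ can make the remainder rate in Assumption~\ref{ass:reg}\textbf{b} substantially easier to attain and improve finite-sample accuracy --- but this does not alter the limiting distribution. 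I would therefore present the proof explicitly as an instance of the general theorem under relabeling, emphasizing that the only steps requiring care are confirming that the MAR structure and the regularity conditions survive the covariate augmentation, both of which follow immediately from the $\sigma(X)$-measurability of $f(X)$.
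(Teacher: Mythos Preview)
Your proposal is correct and takes essentially the same approach as the paper: the paper's proof is a single sentence stating that the corollary is ``a straightforward application of Theorem~\ref{th:ansi_unknownpi}, where $f(X)$ is employed as an additional covariate,'' and you have simply unpacked this in more detail by spelling out why the MAR and regularity conditions transfer under the covariate augmentation. One small caveat: you build your verification around $f$ being deterministic (so that $\sigma(\tilde X)=\sigma(X)$), whereas the paper's framing and the remark following the corollary also entertain stochastic $f$; in that case the $\sigma$-field argument does not apply and MAR with respect to $(X,f(X))$ would need to be assumed directly rather than derived, but this does not affect the reduction to Theorem~\ref{th:ansi_unknownpi}.
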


\begin{remark}
    The utility of incorporating an external predictive model $f$ into the estimation procedure -- as in the \texttt{PPI} framework -- depends critically on the information that $f(X)$ carries relative to the covariates $X$. In many practical applications, $f$ is a deterministic function, such as a black-box machine learning predictor trained externally. In this case, $f(X)$ is entirely determined by $X$, and the conditional expectations in Equation~\ref{eq:if} project the full-data influence function $\finfluence{\data^F;\thetatarget}$ onto the same $\sigma$-field as in the original setting. As a result, the asymptotic variance of the resulting estimator remains unchanged, and the inclusion of $f$ does not yield a theoretical efficiency gain -- though it can still offer great practical advantages by simplifying nuisance model specification.
    More interesting behavior arises when $f$ introduces additional randomness or knowledge beyond $X$. Many modern models (e.g., variational autoencoders, Bayesian methods, and transformers) produce a predictive distribution rather than a single deterministic point estimate; as a result, multiple valid predictions $f(X)$ may be associated with the same covariates $X$. In such cases, the $\sigma$-field onto which the conditional expectations project the full-data influence function may change, and the augmented variable $f(X)$ can reduce the residual variance by capturing information orthogonal to $X$, especially if it correlates with omitted features influencing the outcome.
\end{remark}

Finally, we note that our framework can naturally accommodate the integration of multiple predictive models; that is, we can simultaneously leverage a collection of off-the-shelf models $\{f_1, \dots, f_m\}$ within our estimation procedure, allowing us to flexibly combine information from diverse sources and enhance robustness in practical applications \citep{shan2025sada}.

\subsection{Semiparametric efficiency}
\label{sec:efficiency}
After having established double robustness and asymptotic normality of estimators in the \ds{} setting, the next -- and perhaps most interesting -- question is the study of their optimality. Semiparametric theory offers a rich toolkit to provide an answer. Importantly, our work seems among the first to investigate extensions of semiparametric optimality theory behind the standard i.i.d.~case. In this Section, we adapt Le Cam's theory of convergence of experiments to the \ds{} setting, providing a rigorous benchmark for optimality through the characterization of a convolution theorem for triangular arrays. While more technical than the rest of the paper, this Section sheds light on the optimality of \texttt{AIPW} estimators even when data are generated through a triangular array mechanism. We believe that the following results are of independent interest, and open the venue for more explorations in the realm of semiparametric theory for non-identically distributed or dependent data. 

As briefly introduced in Remark~\ref{rem:triangular}, $\jPtarget$ depends on the sample sizes $n,N$ as $\jPtarget = \mPtarget \cPRtarget \cPtarget$ and $\cPRtarget$ changes with $n,N$. Under Assumption~\ref{ass:decMAR}, the law $ \mPtarget \cPtarget$ is fixed across $n,N$, while the labeling law $\cPRtarget$ varies with $n,N$. To make this dependence more explicit, in this Section we write $\jPtarget_{n,N}$, and we denote its density with respect to some dominating measure $\nu_{n,N}$ as $p^\star_{n,N}$. Similarly, we track the dependence of influence functions on the sample sizes by writing $\ninfluence{\data;\thetatarget}$. The variance of the influence function evaluated at $\mutarget$ and $\pitarget$ is $\Sigma_{n,N} = \VV{\ninfluence{\data;\thetatarget;\mutarget;\pitarget}}$. We can now introduce the set of assumptions required to study efficiency in the \ds{} setting.

\begin{assumption}[Efficiency]
\label{ass:eff}
Assume: 
\begin{enumerate}[label=\textbf{\alph*.}]
    % \item Given $\varepsilon>0$, there exist a measurable function $\pi^\star_0:\RR^p \to (\varepsilon,1-\varepsilon)$ and a deterministic sequence $\an>0$ such that $\pitarget(x) = \an \pi^\star_0(x)$ for all $x\in\RR^p$.
    \item $\varstar = \lim_{n,N\to\infty} \an \Sigma_{n,N}$ exists and is positive definite.
    \item For each fixed $h\in\RR^q$ there exists a sequence of observed-data laws $\jPtarget_{n,N,h}$, all dominated by a common measure $\nu_{n,N}$ with densities $p^\star_{n,N,h}$, such that $\jPtarget_{n,N,0} = \jPtarget_{n,N}$ and the local perturbation
    \begin{equation}
        \ell_{n,N,h}(\data) = (n+N)^{-1/2} \an^{-1/2} h^T \Sigma_{n,N}^{-1} \ninfluence{\data;\thetatarget} 
    \end{equation}
    satisfies the following properties:
    \begin{itemize}
        \item Quadratic mean differentiability along $h$:
        \begin{equation}
            (n+N) \int \left(\sqrt{p^\star_{n,N,h}} - \sqrt{p^\star_{n,N}} - \frac{1}{2} \ell_{n,N,h}  \sqrt{p^\star_{n,N}} \right)^2 \,d\nu_{n,N} \to 0\,.
        \end{equation}
        \item Local shift of the target:
        \begin{equation}
            \sqrt{(n+N)\an} \left(\theta\left( \jPtarget_{n,N,h} \right) -\thetatarget \right) \to h\,.
        \end{equation}
    \end{itemize}
    \item An estimator sequence $\hat\theta_{n,N}$ is called regular at rate $\sqrt{(n+N)\an}$ if, for every fixed $h\in\RR^q$,
    \begin{equation}
        \sqrt{(n+N)\an} \left( \hat\theta_{n,N} - \theta\left( \jPtarget_{n,N,h} \right) \right) 
    \end{equation}
    converges in distribution under $\jPtarget_{n,N,h}$ to a limit law that does not depend on $h$.
\end{enumerate}
\end{assumption}

\begin{remark}
    %Assumption \textbf{a} is a simplification of the theory developed so far, as it implies that the overlap decay is uniform across the covariate space. This assumption describes the setting where the ratio between the number of labeled and unlabeled observations decays independently of the covariate values, and, as such, it is very natural in the semi-supervised regime. 
    Assumption \textbf{a} is required for the existence of a benchmark for optimality, as embedded in the existence of a properly rescaled asymptotic variance. Assumption \textbf{b} introduces the least-favorable triangular-array submodel. In particular, the first assumption controls the QMD remainder along the local sequence, while the second assumption controls the smoothness of the target along the same sequence. Assumption \textbf{c} is a standard extension of the notion of regularity of estimators to a non-standard rate. See Supplementary Section~\ref{supp_sec:eff_examples} for some sufficient verifiable conditions that satisfy Assumption~\ref{ass:eff}.
\end{remark}

We can now provide the central result of this Section, which extends the notion of semiparametric efficiency to the \ds{} setting through a convolution theorem. An immediate corollary implies that \texttt{AIPW} estimators are semiparametrically efficient under the \ds{} setting.

\begin{theorem}[Convolution]
\label{th:convolution}
    Assume $(n+N)\an\to\infty$. Moreover, let Assumptions~\ref{ass:decMAR}, \ref{ass:reg}, and \ref{ass:eff} hold. Then any estimator $\hat\theta_{n,N}$ regular at rate $\sqrt{(n+N)\an}$ satisfies
    \begin{equation}
       (n+N)^{1/2}\an^{1/2} \left(\hat\theta_{n,N} - \thetatarget \right) \dto \Normal{0}{\Sigma^\star} * M
    \end{equation}
    for some probability measure $M$ on $\RR^q$.
\end{theorem}

\begin{corollary}
\label{cor:aipw}
    Under the same Assumptions as Theorem~\ref{th:convolution}, if $\Bar{\mu} = \mutarget$ and $\pibar=\pitarget$, the estimator $\hat{\theta}_{\hat\mu;\hat\pi}$ obtained by solving Eq.~\ref{eq:est_eq_piunknown} is semiparametric efficient.
\end{corollary}

\begin{remark}[\texttt{AIPW} efficiency in non-i.i.d.~setting]
The results in this section formally establish that Augmented Inverse Probability Weighting (\texttt{AIPW}) estimators are not only doubly robust and asymptotically normal but also semiparametrically efficient within the \ds{} framework. This is a significant extension of classical efficiency theory, which typically assumes a fixed data-generating distribution and i.i.d.~observations. By proving a convolution theorem for triangular arrays, we demonstrate that when the nuisance functions (propensity score and outcome projection) are correctly specified or converge at appropriate rates, the \texttt{AIPW} estimator achieves the semiparametric efficiency lower bound $\Sigma^\star$. This confirms that our proposed framework provides an optimal inferential benchmark even under the challenging regime of decaying overlap and non-i.i.d.~data structures.
\end{remark}

% \subsection{Asymptotic variance estimation}
% Here, we provide a strategy to estimate the variance term appearing in the asymptotic normality expressions of Theorems~\ref{th:ansi_knownpi} and~\ref{th:ansi_unknownpi} and Corollary~\ref{cor:an_ppi}. This quantity is pivotal for the construction of confidence intervals for $\thetatarget$, as well as for hypothesis testing. Our proposal is a plug-in estimate of the population variance, namely
% \begin{equation}
% \label{eq:var}
%     \hat V_{n,N}\left(\thetatarget;\hat\mu;\hat\pi\right) = \frac{1}{n+N} \sum_{i=1}^{n+N} \influence{\data_i;\thetatarget;\hat\mu;\hat\pi} \influence{\data_i;\thetatarget;\hat\mu;\hat\pi}^T\,.
% \end{equation}
% If the propensity score is known, then $\hat\pi=\pitarget$ and thus we get back the variance term in Theorem~\ref{th:ansi_knownpi}; if additional information coming from an external predictive model is available, then the influence function in Equation~\ref{eq:var} can be replaced by the \texttt{PPI} version of Corollary~\ref{cor:an_ppi}.

\section{Simulation study}
\label{sec:sim}

In this Section, we instantiate the general theory developed above for two specific target parameters: the \textit{multivariate outcome mean} and the \textit{coefficients of a linear regression model}. For each case, we conduct extensive simulation studies to assess the finite-sample performance of the proposed estimators under varying degrees of overlap and model misspecifications. We compare our estimators to a \textit{naive} estimator computed only on the labeled dataset, %an \textit{oracle} estimator that has access to the true nuisance functions, and 
the \textit{outcome regression} (\texttt{OR}) and the \textit{inverse probability weighting} (\texttt{IPW}) counterparts. The \texttt{OR} influence function is
\begin{equation}
    \orinfluence{\data;\thetatarget} = \EE{\finfluence{\data;\thetatarget}\mid X}\,,
\end{equation}
while the \texttt{IPW} influence function is
\begin{equation}
    \ipwinfluence{\data;\thetatarget} = \frac{R}{\pitarget(X)}\finfluence{\data;\thetatarget}\,.
\end{equation}
We employ cross-fitting to learn nuisance functions involved in the computation of our estimator, \texttt{OR} and \texttt{IPW}, with $J=2$ folds.

We are interested in probing double robustness and measuring actual coverage. Therefore, we assess performance in terms of estimation accuracy, inferential coverage, and inferential power. To measure accuracy, we compute the root mean squared error under the $\mathbbmss{L}_2$-distance between the true $\thetatarget\in\RR^q$ and its estimate $\hat\theta$, i.e.~$\MSE{\hat\theta} = \norm{\hat\theta - \thetatarget}{2}$. To measure coverage, we evaluate the empirical fraction of times $\Delta$ in which each component of $\thetatarget$ falls into the asympotic rectangular region defined by the $1-\alpha$ coordinate-wise confidence intervals of $\hat\theta$. To measure power, we compute the average width of the $1-\alpha$ coordinate-wise confidence intervals for each component of $\thetatarget$. In Supplementary Section~\ref{supp_sec:sim}, we also report simulation results for simultaneous confidence regions.

We mimic and extend the data-generating process in the simulation study conducted by \citet{zhang2023double}. For each $i=1,\dots,n+N$, we first generate $p$-dimensional i.i.d.~Gaussian covariates $X_i\sim\Normal{0}{I_p}$. Then we assign the missingness label according to a Bernoulli law, i.e.~$R_i\mid X_i\sim\text{Ber}(\pitarget(X_i))$, where the propensity score $\pitarget$ is modeled as:
\begin{itemize}
    \item \textit{Decaying MCAR}. The labeled indices are selected uniformly at random, implying $\pitarget(X_i) = n / (n+N)$.
    \item \textit{Decaying logistic}. The selection probabilities are set to $\pitarget(X_i) = \text{expit}\left(\log(n / (n+N)) + X_i^T \gamma \right)$ where $\text{expit}(x) = \exp(x) / (1 + \exp(x))$. The vector of coefficients $\gamma$ is defined as $\gamma=(1,-1,0_{p-2})$. This setting corresponds to a logistic propensity score with \textit{decaying offset} (see Supplementary Figure~\ref{fig:logistic_ps} for a graphical illustration).
\end{itemize}
We then generate outcomes $Y_i\in\RR^k$ according to the following linear model:
\begin{equation}
    Y_i = \beta^TX_i + \varepsilon_i\,,
\end{equation}
where $\beta\in\RR^{p\times k}$, $\beta_\ell\sim\Normal{0}{I_p}$  for each $\ell=1,\dots,k$, and $\varepsilon_i \sim \Normal{0}{0.1\cdot I_k}$.

We model the propensity score using a constant estimator, which essentially corresponds to a MCAR estimator, and an offset logistic regression. Within each fold $j$, the constant estimator computes $\sum_{i\in\hat{\mathbbmss{P}}^{[j]}} R_i/ |\hat{\mathbbmss{P}}^{[j]}|$, while the offset in the logistic regression is set at $\log\left(\sum_{i\in\hat{\mathbbmss{P}}^{[j]}} R_i/ |\hat{\mathbbmss{P}}^{[j]}|\right)$. Similarly, we model the outcome regression using either a constant model (mean) or linear regression. For each simulation scenario, we set $\alpha=0.05$, $p=10$, $n \in \{ 100, 500, 1000\}$, $N \in \{10n, 100n\}$. Each configuration is replicated across $1000$ random seeds.

\subsection{Multivariate outcome mean}
Given full data $\data^F = (X,Y)$, with $X\in\RR^p$ and $Y\in\RR^k$, the $k$-dimensional full-data influence function for the $k$-dimensional target parameter $\thetatarget=\EE{Y}$ is
\begin{equation}
    \finfluence{\data^F;\thetatarget} = Y - \thetatarget\,.
\end{equation}
The observed data are $\data = (X,R,RY)$, and thus the observed-data influence function takes the form
\begin{equation}
    \influence{\data;\thetatarget} = \mutarget(X) + \frac{R}{\pitarget(X)} \left(Y - \mutarget(X)\right) - \thetatarget\,,
\end{equation}
where $\mutarget:\RR^p\to\RR^k$ is equal to $\EE{Y\mid X}$. The previous influence function motivates the well-known \textit{augmented inverse probability weighting} (\texttt{AIPW}) estimator \citep{bang2005doubly}
\begin{equation}
\label{eq:aipw}
    \hat\theta_{\hat\mu;\hat\pi}^\texttt{AIPW} = \frac{1}{n+N}  \sum_{j=1}^J \sum_{i\in\mathcal{K}^{[j]}} \left[ \hat\mu^{[-j]}(X_i) + \frac{R_{i}}{\hat\pi^{[-j]}(X_i)} \left(Y_i - \hat\mu^{[-j]}(X_i)\right) \right]\,.
\end{equation}
%The oracle estimator $\hat\theta^{\texttt{oracle}}$ is computed as the \texttt{AIPW}, substituting the true nuisance functions to their estimated counterparts, that is $\hat\theta^{\texttt{oracle}} = \hat\theta_{\mutarget;\pitarget}^\texttt{AIPW}$. 
Similarly, the naive, \texttt{OR} and \texttt{IPW} \citep{rosenbaum1983central} estimators for the multivariate outcome mean are:
\begin{equation}
    \hat\theta^{\texttt{naive}} = \frac{1}{\sum_{i=1}^{n+N} R_i} \sum_{i=1}^{n+N} R_i Y_i \,,\quad \hat\theta_{\hat\mu}^\texttt{OR} = \frac{1}{n+N} \sum_{j=1}^J \sum_{i\in\mathcal{K}^{[j]}} \hat\mu^{[-j]}(X_i)\,,\quad  \hat\theta_{\hat\pi}^\texttt{IPW} = \frac{1}{n+N}  \sum_{j=1}^J \sum_{i\in\mathcal{K}^{[j]}} \frac{R_{i}Y_i}{\hat\pi^{[-j]}(X_i)}\,.
\end{equation}

Here, we set $k=2$, so that the problem becomes bivariate outcome mean estimation. Figure~\ref{fig:sim_mean} reports results for the decaying logistic scenario with $n = 100$ labeled samples and $N = 1000$ unlabeled samples, the outcome model $\hat\mu$ estimated using linear regression, and propensity score model $\hat\pi$ estimated via constant estimator. All the other simulation results are reported in Supplementary Section~\ref{supp_sec:sim}. The boxplot of RMSE shows that both our \texttt{AIPW} proposal and \texttt{OR} achieve low and stable error across replications, while \texttt{IPW}, using a misspecified nuisance function, exhibits bias. Similarly, the naive estimator, which does not adjust for distribution shift, performs poorly. The central and right panels confirm these trends in terms of coverage: our approach and \texttt{OR} attain near-nominal coverage (around 95\%) with ``narrow'' confidence intervals, while \texttt{IPW} and the naive estimator suffer from substantial undercoverage despite the wider confidence intervals. These results highlight the robustness and efficiency of \texttt{AIPW} under the \ds{} regime. Similar conclusions can be drawn by analyzing other simulation scenarios in Supplementary Section~\ref{supp_sec:sim}.

\begin{figure}
    \centering
    \includegraphics[width=\linewidth]{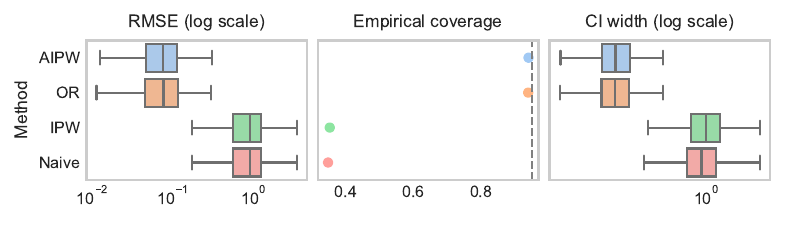}
    \caption{Multivariate outcome mean simulation results. The simulation is run $1000$ times under the decaying logistic scenario with $n = 100$ labeled samples and $N = 1000$ unlabeled samples; the outcome model $\hat\mu$ is estimated using linear regression, and propensity score model $\hat\pi$ is estimated via constant estimator. The left panel displays boxplots summarizing the RMSE distribution over different seeds of the considered estimators. The central panel shows empirical coverage. The right panel displays the distribution of the width of the confidence intervals over different seeds.}
    \label{fig:sim_mean}
\end{figure}

\subsection{Linear regression coefficients}
Given full data $\data^F = (X,Y)$, with $X\in\RR^p$ and $Y\in\RR$, the $p$-dimensional full-data influence function for the $p$-dimensional target parameter $\thetatarget=\argmin_\theta\EE{(Y-X^T\theta)^2}$ is
\begin{equation}
    \finfluence{\data^F;\thetatarget} = \Sigma_X^{-1} X \left(Y - X^T\thetatarget \right)\,,
\end{equation}
where $\Sigma_X = \EE{XX^T} \in \RR^{p\times p}$. The observed data are $\data = (X,R,RY)$, and thus the observed-data influence function takes the form
\begin{equation}
\label{eq:aipw_lm}
    \influence{\data;\thetatarget} = \Sigma_X^{-1}X\mutarget(X) + \frac{R}{\pitarget(X)} \Sigma_X^{-1}X\left(Y - \mutarget(X)\right) -\Sigma_X^{-1}XX^T\thetatarget\,,
\end{equation}
where $\mutarget:\RR^p\to\RR$ is equal to $\EE{Y\mid X}$. 
We estimate the nuisance $\EE{XX^T}$ at a parametric rate, i.e.~$(n+N)^{-1/2}$, using the covariates in both the labeled and unlabeled datasets via the sample covariance $\hat\Sigma = (n+N)^{-1}\sum_{i=1}^{n+N} X_i X_i^T$.%a cross-fitted version of the Ledoit-Wolf shrinkage estimator on the covariates in both the labeled and unlabeled datasets \citep{ledoit2004well}, which reads as:
% \begin{equation}
%     \hat\Sigma^{[-j]} = \left(1 - \frac{1}{p}\right) I_p + \frac{1}{p} \left(\frac{1}{n+N - \left| \mathbbmss{P}_{n+N}^{[j]} \right|} \sum_{j'\neq j} \sum_{i\in\mathbbmss{P}_{n+N}^{[j']}} X_i X_i^T\right) \,.
% \end{equation}
The previous influence function motivates the \textit{augmented inverse probability weighting} (\texttt{AIPW}) estimator
\begin{equation}
\label{eq:ols}
    \hat\theta_{\hat\mu;\hat\pi}^\texttt{AIPW} = \hat\Sigma^{-1} \frac{1}{n+N} \sum_{j=1}^J \sum_{i\in\mathcal{K}^{[j]}} X_i \left[\hat\mu^{[-j]}(X_i) + \frac{R_{i}}{\hat\pi^{[-j]}(X_i)} \left(Y_i - \hat\mu^{[-j]}(X_i)\right) \right]\,.
\end{equation}
%The oracle estimator $\hat\theta^{\texttt{oracle}}$ is computed as the \texttt{AIPW}, substituting the true nuisance functions to their estimated counterparts, that is $\hat\theta^{\texttt{oracle}} = \hat\theta_{\mutarget;\pitarget}^\texttt{AIPW}$. 
Similarly, the naive, \texttt{OR} and \texttt{IPW} estimators for the linear regression coefficients are:
\begin{equation}
    \begin{gathered}
    \hat\theta^{\texttt{naive}} = \left(\frac{1}{\sum_{i=1}^{n+N} R_i} \sum_{i=1}^{n+N} R_i X_i X_i^T \right)^{-1} \frac{1}{\sum_{i=1}^{n+N} R_i} \sum_{i=1}^{n+N}  X_i R_i Y_i \,,\quad \hat\theta_{\hat\mu}^\texttt{OR} = \hat\Sigma^{-1} \frac{1}{n+N} \sum_{j=1}^J \sum_{i\in\mathcal{K}^{[j]}} X_i \hat\mu^{[-j]}(X_i) \,,\\
    \hat\theta_{\hat\pi}^\texttt{IPW} = \hat\Sigma^{-1} \frac{1}{n+N} \sum_{j=1}^J \sum_{i\in\mathcal{K}^{[j]}} X_i \frac{R_{i}Y_i}{\hat\pi^{[-j]}(X_i)}\,.
    \end{gathered}
\end{equation}

Figure~\ref{fig:sim_coef} presents results for the decaying logistic scenario ($n = 100$, $N = 1000$) where the outcome model $\hat\mu$ is estimated via linear regression and the propensity score $\hat\pi$ via a constant estimator. In terms of estimation accuracy, the RMSE boxplots show that all estimators except \texttt{IPW} achieve low and stable error. The \texttt{IPW} estimator, relying on a misspecified propensity model, performs poorly, exhibiting both higher bias and variability. The analysis of inferential properties (central and right panels) reveals a critical limitation of pure outcome regression in this setting. The \texttt{OR} estimator displays extremely low coverage despite its high accuracy; because $\hat\mu$ is well-specified, the residuals vanish, resulting in artificially narrow confidence intervals that fail to account for the intrinsic noise of the outcome. Conversely, the biased \texttt{IPW} estimator approximately attains correct coverage because its substantial variance inflates the confidence interval widths. The proposed \texttt{AIPW} estimator effectively resolves this trade-off, inheriting the high estimation precision of \texttt{OR} while recovering the valid inferential coverage of \texttt{IPW}. We also note that the naive estimator remains valid in this setting, as the conditional distribution of $Y$ given $X$ is invariant under MAR \citep{little1992regression, moon2025augmented}. These findings underscore the robustness of \texttt{AIPW} under decaying overlap; additional scenarios supporting these conclusions are detailed in Supplementary Section~\ref{supp_sec:sim}.

\begin{figure}
    \centering
    \includegraphics[width=\linewidth]{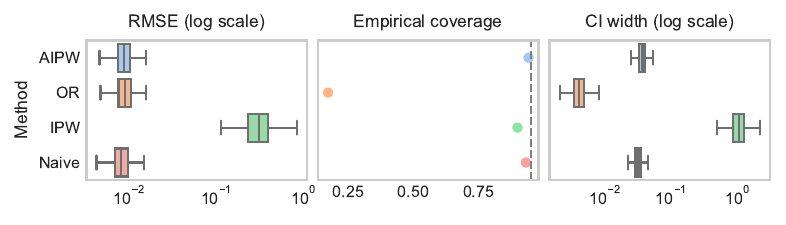}
    \caption{Linear regression coefficients simulation results. The simulation is run $1000$ times under the decaying logistic scenario with $n = 100$ labeled samples and $N = 1000$ unlabeled samples; the outcome model $\hat\mu$ is estimated using linear regression, and propensity score model $\hat\pi$ is estimated via constant estimator. The left panel displays boxplots summarizing the RMSE distribution over different seeds of the considered estimators. The central panel shows empirical coverage. The right panel displays the distribution of the width of the confidence intervals over different seeds.}
    \label{fig:sim_coef}
\end{figure}

\section{Real-data applications}
\label{sec:app}
\subsection{\texttt{BLE-RSSI} study}
To demonstrate the practical utility of our approach, we first apply it to the \texttt{BLE-RSSI} dataset \citep{mohammadi2017semisupervised}. This dataset was collected in a library using Received Signal Strength Indicator (RSSI) measurements obtained from an array of $13$ Bluetooth Low Energy (BLE) iBeacons. Data acquisition was performed using a smartphone during the library’s operational hours. The primary objective motivating this dataset is to enable indoor localization, that is, to accurately estimate the spatial position of a mobile device based on wireless signal strength measurements, a key problem in many real-world applications including navigation, resource management, and smart building systems.

The dataset comprises two components: a labeled subset containing $n = 1420$ observations and an unlabeled subset containing $N = 5191$ observations. For each labeled observation, the recorded features include a location identifier and the RSSI readings from the 13 iBeacons. Location identifiers combine a letter (representing longitude) and a number (representing latitude) within the physical layout of the floor. A schematic diagram illustrating the arrangement of the iBeacons and the labeled locations is provided in Supplementary Figure~\ref{fig:room}. Each location label is then mapped to a two-dimensional real-valued coordinate, representing the longitude and latitude of the device at the time of measurement.

The inferential goal in this application is to estimate the mean spatial position of the device across the observed population. Thus, the problem can be formulated as one of bivariate outcome mean estimation, where the target parameter is the mean of a two-dimensional random vector. Formally, let $Y_i \in \mathbb{R}^2$ denote the bivariate spatial coordinates (longitude and latitude) for observation $i$; $R_{i} \in \{0, 1\}$ indicate whether the location label is observed; and $X_i \in \mathbb{R}^p$ represent the vector of $p=13$ RSSI readings from the iBeacons, used as auxiliary covariates. The observed data take the form $\mathcal{D}_i = (X_i, R_{i}, R_{i} Y_i)$, for $i = 1, \dots, n+N$.

We compare two estimation strategies: the naive estimator, which only uses labeled observations and ignores selection bias, and the \texttt{AIPW} estimator defined in Equation~\ref{eq:aipw}, which corrects for the semi-supervised structure. For the \texttt{AIPW} estimator, the outcome regression function $\hat\mu$ is estimated using random forests \citep{breiman2001random}, with RSSI measurements as predictors. The propensity score $\hat\pi$ is estimated using logistic regression with offset, also based on RSSI predictors. To mitigate overfitting and ensure valid asymptotic properties, both nuisance functions are estimated using cross-fitting with $J = 5$ folds.

\begin{figure}
    \centering
    \includegraphics[width=\linewidth]{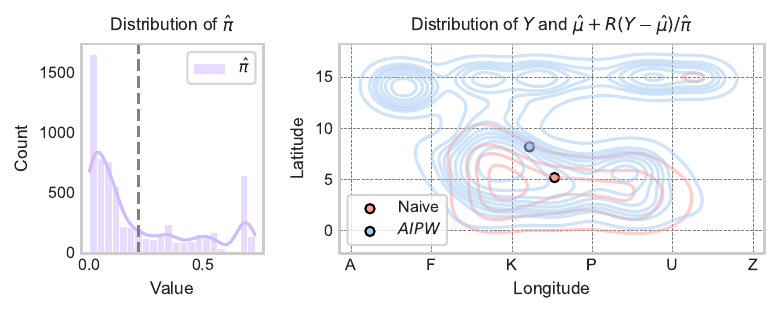}
    \caption{\texttt{BLE-RSSI} application results. Left panel: estimated propensity scores across observations, along with the proportion of labeled data $n / (n + N)$, shown as a vertical dashed line. The heterogeneity in the estimated propensity scores may suggest a missing-at-random (MAR) labeling mechanism. Right panel: joint bivariate distribution over the floor bivariate grid of the observed position of the device $Y$ (red) compared to the distribution of pseudo-outcomes (blue), that is augmented predicted positions from the estimated outcome model using signal recorded by remote sensors. The discrepancy between observed and predicted outcomes may suggest a missing-at-random (MAR) labeling mechanism. Dots represent bivariate means computed using our \texttt{AIPW} approach and the naive approach.}
    \label{fig:app_spatial}
\end{figure}

Results are presented in Figure~\ref{fig:app_spatial}. The heterogeneous distribution of the estimated propensity scores indicates that the probability of label observation varies across the covariate space, supporting the plausibility of a missing-at-random (MAR) assumption conditional on the RSSI features. Furthermore, the distribution of the pseudo-outcomes, i.e.~the spatial coordinates $\hat\mu+R(Y-\hat\mu)/\hat\pi$ shows a marked shift compared to the labeled outcomes $Y$, moving upward and to the left in the two-dimensional grid. This discrepancy between the labeled and imputed outcome distributions highlights the necessity of properly accounting for the semi-supervised structure of the data. Ignoring the unlabeled data or treating the labeled set as representative would likely introduce substantial bias in the estimation of the mean device position. Overall, this example demonstrates how the proposed methodology can successfully leverage partially labeled data to produce robust and unbiased inference in complex real-world settings.

\subsection{\texttt{NHEFS} study}
We further apply our method to the \textit{National Health and Nutrition Examination Survey Data I Epidemiologic Follow-up Study} (\texttt{NHEFS}). The \texttt{NHEFS} was jointly initiated by the National Center for Health Statistics and the National Institute on Aging to investigate relationships between clinical, nutritional, and behavioral factors. 

Following \citet{hernan2010causal, ertefaie2023nonparametric, zhang2023decaying}, we consider a subset of the \texttt{NHEFS} which consists of 1561 smokers aged 25-74, who had a first baseline visit and a second follow-up visit approximately 10 years later. Our goal is to estimate the mean for two outcomes of interest: weight change and smoking intensity change. We thus formulate this problem as bivariate outcome mean estimation. To simulate a challenging semi-supervised setting with distribution shift, we define the labeled set based on alcohol consumption. Specifically, we treat observations as labeled ($R_i=1$) only if the participant abstains from alcohol, while drinkers are treated as unlabeled observations ($R_i=0$). This creates a scenario where the labeled mechanism is sparse (approximately $12.5\%$ of the data are labeled, i.e.~$n=195$ and $N=1366$) and likely biased, as abstainers may differ systematically from the general population in terms of health behaviors. When observed, the response variable $Y_i \in \RR^2$ contains weight change and smoking intensity change; while the always observed $X_i \in \RR^{p}$ contains $p=9$ demographic and health covariates, including baseline weight, age, sex, race, education, exercise and activity habits, years of smoking, and whether the individual has quitted smoking between the two visits. We compare the naive estimator (sample mean of labeled data) against the \texttt{AIPW} estimator defined in Equation~\ref{eq:aipw}. As in the previous application, we estimate the nuisance function $\hat{\mu}$ using random forests \citep{breiman2001random} and the propensity score $\hat{\pi}$ using logistic regression with an offset, employing 5-fold cross-fitting.

\begin{figure}
    \centering
    \includegraphics[width=\linewidth]{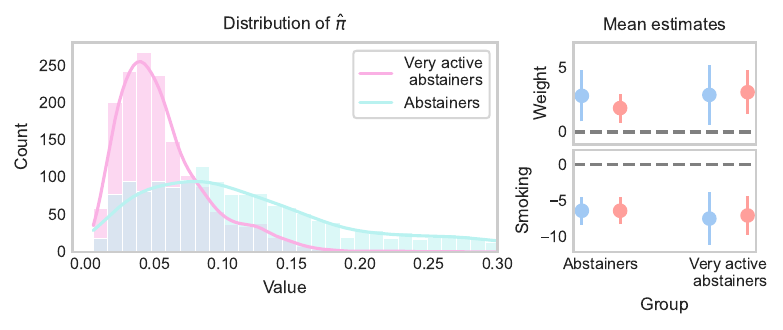}
    \caption{\texttt{NHEFS} application results. Left panel: estimated propensity scores across observations, for various missingness indicators (very active, moderately active, marginal). The heterogeneity in the estimated propensity scores may suggest a missing-at-random (MAR) labeling mechanism. Right panel: estimated means from \texttt{AIPW} (blue) and the naive estimator based solely on labeled data (red). Confidence intervals are at $\alpha=0.05$ significance level.}
    \label{fig:nhefs}
\end{figure}

Results are presented in Figure~\ref{fig:nhefs}. On the labeled dataset, the naive estimator suggests a mean weight gain of $1.88kg$. Although not significantly different, the \texttt{AIPW} estimator appears to correct for the distribution shift, estimating a higher population mean weight gain of $2.34kg$. This suggests that the labeled sub-population (alcohol abstainers) may experience less weight gain than the general population, a bias our method identifies and corrects. The results for the change in smoking intensity seem to agree between the two estimators, with \texttt{AIPW} estimating a reduction of $-6.36$ cigarettes per day.

To empirically validate our theoretical findings regarding decaying overlap, we further stratify the dataset by activity level. In particular, we only consider the cohort of ``very active'' individuals as labeled. This subset comprises only $n=89$ subjects (approximately 5.7\% of the entire dataset). In this regime, the \texttt{AIPW} estimator remains stable with reasonable standard errors. In fact, the estimated weight change is $2.91kg$, compared to the naive estimate of $3.12kg$. Similarly, for the change in smoking intensity, the \texttt{AIPW} estimator reports a reduction of $-7.44$ cigarettes per day, while the naive estimator suggests a reduction of $-7.01$. The left panel of Figure~\ref{fig:nhefs} illustrates the distribution of the estimated propensity scores for this subgroup, which are concentrated near zero (mostly $<0.15$), reflecting the severe sparsity of the labeled data. Despite this challenging regime ($n/N\approx0.057$), the estimator yields informative 95\% confidence intervals -- $[0.58,5.24]$ for weight gain and $[-11.05,-3.82]$ for smoking reduction -- confirming that our method maintains inferential validity even under significant label scarcity. The divergence between the naive and \texttt{AIPW} estimates indicates that the ``very active'' labeled cohort is systematically different from the general population; specifically, the naive estimator appears to overestimate weight gain and underestimate smoking reduction, a bias that our approach detects and corrects by properly weighting the unlabeled data.

% To empirically validate our theoretical findings regarding decaying overlap, we further stratify the dataset by activity level. This stratification naturally creates regimes of diminishing sample size and shrinking overlap. In particular, the cohort of very active individuals comprises $n=89$ subjects (approximately 5.7\% of the entire dataset). In this regime, the \texttt{AIPW} estimator remains stable with reasonable standard errors. In contrast, the group of moderately active individuals ($n=86$) presents a more extreme propensity score distribution despite the similar sample size. Here, while the naive estimator remains spuriously precise with a standard error of $0.83$ for weight change, the \texttt{AIPW} estimator correctly reflects the loss of information due to vanishing overlap: the standard error increases to $2.09$, and the confidence intervals widen significantly. This confirms that our inference procedure remains valid even in extreme regimes; rather than yielding a biased estimate with narrow confidence intervals (like the naive approach), the \texttt{AIPW} estimator captures the inherent uncertainty due to the geometry of the decaying overlap, preventing false positive conclusions.

\section{Conclusions}
\label{sec:end}

In this work, we have developed a general semiparametric framework for efficient estimation and inference in semi-supervised learning problems where outcome labels are missing at random (MAR) and the overlap decays with sample size. Our approach accommodates arbitrary statistical targets and remains valid under flexible modeling of high-dimensional nuisance functions. Leveraging the tools of semiparametric theory, we have derived efficient influence functions under decaying overlap and shown that \texttt{AIPW} estimators preserve double robustness, asymptotic normality, and semiparametric efficiency. 

A key insight of our analysis is that overlap decay imposes fundamental adaptations of inference, invalidating classical root-$n$ asymptotics and requiring a redefinition of the effective sample size. We formally characterized this rate through a generalization of Lindeberg conditions and provided practical \texttt{AIPW} estimators that remain consistent in this more challenging regime. We further extended our methodology to incorporate black-box prediction models in the spirit of prediction-powered inference (\texttt{PPI}), broadening its applicability to MAR labeling mechanisms.

Our simulation studies confirmed the theoretical properties of \texttt{AIPW} estimators in the \ds{} setting, including double robustness, asymptotic normality, and valid coverage under finite samples and severe overlap decay. These findings suggest that our framework is both theoretically sound and practically useful, especially in settings where labeled data are scarce and distribution shift is non-negligible.

Despite these contributions, several limitations remain. First, while we allow for decaying overlap and general targets, our current framework assumes that the MAR assumption holds, i.e., distribution shift depends only on covariates and not on unobserved variables. Relaxing this assumption toward a missing-not-at-random (MNAR) setting would require substantially different identification strategies. Second, the performance of \texttt{AIPW} estimators under strong model misspecification remains to be fully understood. Lastly, while we allow for high-dimensional covariates and flexible machine learning models, our theoretical guarantees rely on sufficient regularity and convergence rates that may be difficult to verify or achieve in practice.

Future research directions include extending our theory to MNAR mechanisms and exploring adaptive procedures that optimize the bias-variance trade-off in regions of low overlap. Another important direction is the integration of these techniques into large-scale machine learning pipelines for high-throughput scientific applications, such as genomic association studies, fairness-aware recommendation systems, and policy evaluation in digital platforms.

By relaxing traditional assumptions while preserving rigorous inferential guarantees, our work provides a foundation for reliable statistical inference in the increasingly common regime of biased labeling and partial supervision.

\section*{Acknowledgments}
L.T.~wishes to thank Francesca Chiaromonte and Edward H.~Kennedy for very helpful discussions. This project was funded by National Institute of Mental Health (NIMH) grant R01MH123184.

\bibliographystyle{plainnat}
\bibliography{bib}

\clearpage
\setcounter{page}{1}
\appendix
% \section*{Supplementary Material}
\section*{Supplementary Material of ``Semiparametric semi-supervised learning for general targets under distribution shift and decaying overlap''}

\renewcommand\thefigure{\thesection.\arabic{figure}}
\renewcommand\thetable{\thesection.\arabic{table}}
\setcounter{figure}{0} 
\setcounter{table}{0}

\section{Proofs of main statements}
\label{supp_sec:proofs}

We denote the empirical average operator as $\mathbbmss{P}_{n+N}$. We also write $\shortfif$ to indicate the full-data influence function for notational simplicity.

\subsection{Derivation of Equation~\ref{eq:if}}
Given a full-data influence function $\shortfif$, by Theorem 7.2 in \citet{tsiatis2006semiparametric}, we know that the space of associated observed-data influence functions $\Lambda^\perp$ is given by 
\begin{equation}
\label{eq:class_oif}
    \Lambda^\perp = \left\{\frac{R}{\pitarget(X)} \shortfif \oplus \Lambda_2 \right\}\,,
\end{equation}
where $ \Lambda_2 = \left\{ L_2(\data) \,:\, \EE{L_2(\data)\mid \data^F} = 0 \right\}$. 

By Theorem 10.1 in \citet{tsiatis2006semiparametric}, for a fixed $\shortfif$, the optimal observed-data influence function among the class in Equation~\ref{eq:class_oif} is obtained by choosing
\begin{equation}
    L_2(\data) = - \Pi\left(\frac{R}{\pitarget(X)} \shortfif \mid \Lambda_2 \right)\,,
\end{equation}
where the operator $\Pi$ projects the element $R\shortfif/\pitarget(X)$ onto the space $\Lambda_2$ -- see Theorem 2.1 in \citet{tsiatis2006semiparametric}. 

Finally, by Theorem 10.2 in \citet{tsiatis2006semiparametric}, we know that 
\begin{equation}
    \Pi\left(\frac{R}{\pitarget(X)} \shortfif \mid \Lambda_2 \right) = \left(\frac{R - \pitarget(X)}{\pitarget(X)} \right) h_2(X) \in \Lambda_2\,,
\end{equation}
where $h_2(X) = \EE{\shortfif\mid X}$. This concludes the derivation.

\subsection{Proof of Theorem~\ref{th:ansi_knownpi}}
\begin{proof} 
For simplicity, we show the result for a single cross-fitting fold. In particular, we denote the distribution where $\hat\mu$ is trained as $\hat{\mathbbmss{P}}$ and the distribution where the influence functions are approximated as $\mathbbmss{P}_{n+N}$. Similarly, we denote as $\Bar{\mu}$ the population limit of $\hat\mu$. The propensity score is known, so that $\hat\pi=\pibar=\pitarget$. By \textit{Von Mises} expansion, we have
\begin{equation}
\begin{split}
    \hat\theta_{\hat\mu} - \thetatarget &= \theta\left(\hat{\mathbbmss{P}}\right) + \mathbbmss{P}_{n+N}\left[\influence{\data;\thetatarget;\hat\mu;\pitarget} \right] - \theta(\jPtarget) \\
    &= \left(\mathbbmss{P}_{n+N} - \mathbbmss{E}\right)\left[\influence{\data;\thetatarget;\hat\mu;\pitarget} \right] + \eta(\hat\mu,\mutarget) \\
    &=\left(\mathbbmss{P}_{n+N} - \mathbbmss{E}\right)\left[\influence{\data;\thetatarget;\Bar{\mu};\pitarget}\right] + \left(\mathbbmss{P}_{n+N} - \mathbbmss{E}\right) \left[ \influence{\data;\thetatarget;\hat\mu;\pitarget} - \influence{\data;\thetatarget;\Bar{\mu};\pitarget} \right] + \eta(\hat\mu,\mutarget) \\
    &=\mathbbmss{P}_{n+N}\left[\influence{\data;\thetatarget;\Bar{\mu};\pitarget}\right] + \left(\mathbbmss{P}_{n+N} - \mathbbmss{E}\right) \left[ \influence{\data;\thetatarget;\hat\mu;\pitarget} - \influence{\data;\thetatarget;\Bar{\mu};\pitarget} \right] + \eta(\hat\mu,\mutarget) \,,
\end{split}
\end{equation}
where $\eta(\hat\mu,\mutarget) = \theta\left(\hat{\mathbbmss{P}}\right) - \theta(\jPtarget) + \EE{\influence{\data;\thetatarget;\hat{\mu};\pitarget}}$ by definition. We refer to the three elements on the right-hand side respectively as the \textit{influence function term},  the \textit{empirical process term} and the \textit{remainder term}. 

\textbf{Consistency.} To prove consistency, we need to show that the norm of each term on the right-hand side is $O_\mathbbmss{P}\left((n+N)^{-1/2}\an^{-1/2}\right)$. We do so by analyzing each term separately. 

\textit{First term (influence function).} By definition of influence function, $\EE{\influence{\data;\thetatarget;\Bar{\mu};\pitarget}}=0$, and thus the first term has mean zero. We now analyze the variance. Define $\varbar = \VV{\influence{\data;\thetatarget;\Bar{\mu};\pitarget}} = \EE{\influence{\data;\thetatarget;\Bar{\mu};\pitarget} \influence{\data;\thetatarget;\Bar{\mu};\pitarget}^T}$. By Assumption~\ref{ass:reg}, \textbf{c}, we have $\maxeig{\varbar} \asymp \an^{-1}$. Therefore, we have
\begin{equation}
    \EE{\left((n+N)^{1/2}\an^{1/2} \norm{\mathbbmss{P}_{n+N} \left[\influence{\data;\thetatarget;\Bar{\mu};\pitarget} \right]}{2} \right)^2} = \an \norm{\varbar}{2}^2 \leq \an q^2 \maxeig{\varbar} = O_\mathbbmss{P}(1)\,,
\end{equation}
which in turn implies $\norm{\mathbbmss{P}_{n+N} \left[\influence{\data;\thetatarget;\Bar{\mu};\pitarget} \right]}{2} = O_\mathbbmss{P}\left((n+N)^{-1/2}\an^{-1/2}\right)$.

\textit{Second term (Empirical process).} We need to show that the norm of $\left(\mathbbmss{P}_{n+N} - \mathbbmss{E}\right) \left[ \influence{\data;\thetatarget;\hat\mu;\pitarget} - \influence{\data;\thetatarget;\Bar{\mu};\pitarget} \right]$ is of order $O_{\mathbbmss{P}}\left((n+N)^{-1/2}\an^{-1/2}\right)$. Define $\delta(\data) = \influence{\data;\thetatarget;\hat\mu;\pitarget} - \influence{\data;\thetatarget;\Bar{\mu};\pitarget}\in\RR^q$. Here, we show the stronger result 
\begin{equation}
    (n+N)^{1/2}\an^{1/2} \norm{\left(\mathbbmss{P}_{n+N} - \mathbbmss{E}\right) \left[ \delta(\data) \right]}{2} = o_{\mathbbmss{P}}\left(1\right)\,.
\end{equation}
First, we have:
\begin{equation}
    \delta(\data) = \left(1-\frac{R}{\pitarget(X)}\right) \left(\hat\mu(X) - \Bar\mu(X)\right)\,.
\end{equation}
Conditional on the training fold $\hat{\mathbbmss{P}}$ (so $\hat{\mu}$ is fixed), this term is a mean zero vector. We now calculate the expected squared Euclidean norm of the scaled empirical process vector $(n+N)^{1/2} \an^{1/2} \mathbbmss{P}_{n+N}\left[\delta(\data) \right]$ conditional on $\hat{\mathbbmss{P}}$:
\begin{equation}
\begin{split}
    \EE{\norm{(n+N)^{1/2} \an^{1/2} \mathbbmss{P}_{n+N}\left[\delta(\data) \right]}{2}^2\mid \hat{\mathbbmss{P}}}
    &= \frac{\an}{n+N} \sum_{i=1}^{n+N} \EE{\norm{\delta(\data_i)}{2}^2 \mid \hat{\mathbbmss{P}}} \\
    &= \an \EE{\norm{\delta(\data)}{2}^2 \mid \hat{\mathbbmss{P}}} \\
    &= \an \EE{ \left( 1 - \frac{R}{\pitarget(X)} \right)^2 \norm{\hat{\mu}(X) - \Bar{\mu}(X)}{2}^2 \mid \hat{\mathbbmss{P}}} \\
    &= \an \EE{ \left(\frac{1}{\pitarget(X)} - 1\right) \norm{\hat{\mu}(X) - \Bar{\mu}(X)}{2}^2 \mid \hat{\mathbbmss{P}}} \,,
\end{split}
\end{equation}
where the last equality follows from Lemma~\ref{lem:simple_pi}. Notice that this term goes to 0 with $n$ going to infinity under Assumption~\ref{ass:reg}, \textbf{a}. We now apply conditional Chebyshev inequality:
\begin{equation}
    \PP{\norm{(n+N)^{1/2} \an^{1/2} \mathbbmss{P}_{n+N}\left[\delta(\data) \right]}{2} \geq \varepsilon \mid \hat{\mathbbmss{P}}} \leq \frac{\EE{\norm{(n+N)^{1/2} \an^{1/2} \mathbbmss{P}_{n+N}\left[\delta(\data) \right]}{2}^2\mid \hat{\mathbbmss{P}}}}{\varepsilon^2} \to 0\,.
\end{equation}
By taking marginal expectations over the training set, one obtains the desired result.

\textit{Third term (Remainder).} Again, we need to show that the norm of $\eta(\hat\mu,\mutarget)$ equals $O_{\mathbbmss{P}}\left((n+N)^{-1/2}\an^{-1/2}\right)$. This directly follows from the fact the propensity score is known. In fact, by direct evaluation, in this case the remainder term is exactly zero:
\begin{equation}
\begin{split}
    \eta(\hat\mu,\mutarget) &= \theta\left(\hat{\mathbbmss{P}}\right) - \thetatarget + \EE{\hat\mu(X) + \frac{R}{\pitarget(X)}\left( \finfluence{\data;\theta\left(\hat{\mathbbmss{P}}\right)} - \hat\mu(X)\right)} \\
    &= \theta\left(\hat{\mathbbmss{P}}\right) - \thetatarget +\EE{\finfluence{\data;\theta\left(\hat{\mathbbmss{P}}\right)}} = 0\,.
\end{split}
\end{equation}

The previous results, combined, imply that $\norm{\hat\theta_{\hat\mu} - \thetatarget}{2} = O_{\mathbbmss{P}}\left((n+N)^{-1/2}\an^{-1/2}\right)$.

\textbf{Central Limit Theorem.} We now turn to the asymptotic normality of $\hat\theta_{\hat\mu}$ by showing that the first term on the right-hand side satisfies a form of Central Limit Theorem. This is enough to prove asymptotic normality of the estimator, as the remaining terms are already of order $o_\mathbbmss{P}((n+N)^{-1/2}\an^{-1/2})$.

By Assumption~\ref{ass:reg}, \textbf{c}, we have $\mineig{\varbar} \asymp \an^{-1}$. This implies:
\begin{equation}
\label{eq:eigen}
    \norm{\varbar^{-1/2}}{2}^2 = \left(\maxeig{\varbar^{-1/2}}\right)^2 = \left(\mineig{\varbar}\right)^{-1} \asymp \an\,.
\end{equation}

By the tail condition \textbf{d} in Assumption~\ref{ass:reg}, we have that
\begin{equation}
    \begin{split}
        &\EE{\norm{\varbar^{-1/2}\influence{\data;\thetatarget;\Bar{\mu};\pitarget}}{2}^2 \onea{\norm{\varbar^{-1/2}\influence{\data;\thetatarget;\Bar{\mu};\pitarget}}{2} > \varepsilon \sqrt{n+N}}} \\
        &\leq \EE{\norm{\varbar^{-1/2}}{2}^2\norm{\influence{\data;\thetatarget;\Bar{\mu};\pitarget}}{2}^2 \onea{\norm{\varbar^{-1/2}}{2}\norm{\influence{\data;\thetatarget;\Bar{\mu};\pitarget}}{2} > \varepsilon \sqrt{n+N}}} \\
        &\leq \EE{C\an\norm{\influence{\data;\thetatarget;\Bar{\mu};\pitarget}}{2}^2 \onea{\norm{\sqrt{\an}\influence{\data;\thetatarget;\Bar{\mu};\pitarget}}{2} > \varepsilon \sqrt{n+N}}} \to 0\quad\text{as}\,n,N\to\infty\,,
    \end{split}
\end{equation}
where the first inequality is due to triangle inequality and the second inequality, for some constant $C>0$, is due to the chain of equalities in Equation~\ref{eq:eigen}. Therefore the following Lindeberg condition also holds
\begin{equation}
    (n+N)^{-1} \sum_{i=1}^{n+N}\EE{\norm{\varbar^{-1/2}\influence{\data_i;\thetatarget;\Bar{\mu};\pitarget}}{2}^2 \onea{\norm{\varbar^{-1/2}\influence{\data_i;\thetatarget;\Bar{\mu};\pitarget}}{2} > \varepsilon \sqrt{n+N}}} \to 0\quad\text{as}\,n,N\to\infty\,,
\end{equation}
which implies, by Lindeberg-Feller Central Limit Theorem \citep{van2000asymptotic}, that
\begin{equation}
    (n+N)^{-1/2} \varbar^{-1/2} \sum_{i=1}^{n+N} \influence{\data_i;\thetatarget;\Bar{\mu};\pitarget} \dto \Normal{0}{I_q}\,.
\end{equation}
The last step is to show that:
\begin{equation}
    \left(\VV{\influence{\data;\thetatarget;\hat\mu;\pitarget}}\right)^{-1/2}= \varbar^{-1/2} + o_\mathbbmss{P}(\an^{1/2}) \,.
\end{equation}
This can be shown by bounding the difference between the two variance matrices. By expanding the $\mathbbmss{L}_2$ norm and applying the Cauchy-Schwarz inequality, the difference between the second moments is dominated by the cross-term:
\begin{equation}
    \begin{split}
        \norm{\VV{\influence{\data;\thetatarget;\hat\mu;\pitarget}} - \varbar}{2} &\le \EE{\norm{\influence{\data;\thetatarget;\hat\mu;\pitarget} - \influence{\data;\thetatarget;\bar\mu;\pitarget}}{2}^2} \\
        &+ 2 \sqrt{\EE{\norm{\influence{\data;\thetatarget;\bar\mu;\pitarget}}{2}^2}} \sqrt{\EE{\norm{\influence{\data;\thetatarget;\hat\mu;\pitarget} - \influence{\data;\thetatarget;\bar\mu;\pitarget}}{2}^2}} \\
        &\leq o_\mathbbmss{P}(\an^{-1/2})\,,
    \end{split}
\end{equation}
where the last step follows from Assumptions~\ref{ass:reg}, \textbf{a} and \textbf{c}. This means:
\begin{equation}
    \an\ \VV{\influence{\data;\thetatarget;\hat\mu;\pitarget}} \pto \an \varbar\,.
\end{equation}
Because Assumption~\ref{ass:reg}, \textbf{c}, guarantees that the scaled variance $\an \varbar$ is a positive definite matrix with eigenvalues bounded away from zero, we can apply the continuous mapping theorem to the inverse square root function:
\begin{equation}
    \left(\an \VV{\influence{\data;\thetatarget;\hat\mu;\pitarget}} \right)^{-1/2} = (\an \varbar)^{-1/2} + o_\mathbbmss{P}(1)\,,
\end{equation}
which delivers the result. In summary, we have
\begin{equation}
    (n+N)^{1/2} \VV{\influence{\data;\thetatarget;\hat\mu;\pitarget}}^{-1/2} \left(\hat\theta_{\hat\mu} - \thetatarget\right) =  (n+N)^{-1/2} \varbar^{-1/2} \sum_{i=1}^{n+N} \influence{\data_i;\thetatarget;\Bar{\mu};\pitarget} + o_\mathbbmss{P}(1)\,.
\end{equation}
\end{proof}

\subsection{Proof of Theorem~\ref{th:ansi_unknownpi}}
\begin{proof} For simplicity, we show the result for a single cross-fitting fold. In particular, we denote the distribution where $\hat\mu$ and $\hat\pi$ are trained as $\hat{\mathbbmss{P}}$ and the distribution where the influence functions are approximated as $\mathbbmss{P}_{n+N}$. Similarly, we denote as $\Bar{\mu}$ and $\pibar$ the population limit of $\hat\mu$ and $\hat\pi$, respectively. By \textit{Von Mises} expansion, we have
\begin{equation}
\begin{split}
    \hat\theta_{\hat\mu;\hat\pi} - \thetatarget &= \theta\left(\hat{\mathbbmss{P}}\right) + \mathbbmss{P}_{n+N}\left[\influence{\data;\thetatarget;\hat\mu;\hat\pi} \right] - \theta(\jPtarget) \\
    &= \left(\mathbbmss{P}_{n+N} - \mathbbmss{E}\right)\left[\influence{\data;\thetatarget;\hat\mu;\hat\pi} \right] + \eta\left((\hat\mu,\hat\pi),(\mutarget,\pitarget)\right) \\
    &=\left(\mathbbmss{P}_{n+N} - \mathbbmss{E}\right)\left[\influence{\data;\thetatarget;\Bar{\mu};\pibar}\right] + \left(\mathbbmss{P}_{n+N} - \mathbbmss{E}\right) \left[ \influence{\data;\thetatarget;\hat\mu;\hat\pi} - \influence{\data;\thetatarget;\Bar{\mu};\pibar} \right] + \eta\left((\hat\mu,\hat\pi),(\mutarget,\pitarget)\right) \\
    &=\mathbbmss{P}_{n+N}\left[\influence{\data;\thetatarget;\Bar{\mu};\pibar}\right] + \left(\mathbbmss{P}_{n+N} - \mathbbmss{E}\right) \left[ \influence{\data;\thetatarget;\hat\mu;\hat\pi} - \influence{\data;\thetatarget;\Bar{\mu};\pibar} \right] + \eta\left((\hat\mu,\hat\pi),(\mutarget,\pitarget)\right) \,,
\end{split}
\end{equation}
where $\eta((\hat\mu, \hat\pi),(\mutarget,\pitarget)) = \theta\left(\hat{\mathbbmss{P}}\right) - \theta(\jPtarget) + \EE{\influence{\data;\thetatarget;\hat{\mu};\hat\pi}}$ by definition. We refer to the three elements on the right-hand side respectively as the \textit{influence function term},  the \textit{empirical process term} and the \textit{remainder term}. 

\textbf{Consistency.} To prove consistency, we need to show that the norm of each term on the right-hand side is $O_\mathbbmss{P}\left((n+N)^{-1/2}\an^{-1/2}\right)$. We do so by analyzing each term separately. 

\textit{First term (influence function).} By definition of influence function, $\EE{\influence{\data;\thetatarget;\Bar{\mu};\pibar}}=0$, and thus the first term has mean zero. We now analyze the variance. Define $\varbar = \VV{\influence{\data;\thetatarget;\Bar{\mu};\pibar}} = \EE{\influence{\data;\thetatarget;\Bar{\mu};\pibar} \influence{\data;\thetatarget;\Bar{\mu};\pibar}^T}$. 
By Assumption~\ref{ass:reg}, \textbf{c}, we have $\maxeig{\varbar} \asymp \an^{-1}$. Therefore, we have
\begin{equation}
    \EE{\left((n+N)^{1/2}\an^{1/2} \norm{\mathbbmss{P}_{n+N} \left[\influence{\data;\thetatarget;\Bar{\mu};\Bar\pi} \right]}{2} \right)^2} = \an \norm{\varbar}{2}^2 \leq \an q^2 \maxeig{\varbar} = O_\mathbbmss{P}(1)\,,
\end{equation}
which in turn implies $\norm{\mathbbmss{P}_{n+N} \left[\influence{\data;\thetatarget;\Bar{\mu};\pibar} \right]}{2} = O_\mathbbmss{P}\left((n+N)^{-1/2}\an^{-1/2}\right)$.

\textit{Second term (Empirical process).} We need to show that the norm of $\left(\mathbbmss{P}_{n+N} - \mathbbmss{E}\right) \left[ \influence{\data;\thetatarget;\hat\mu;\hat\pi} - \influence{\data;\thetatarget;\Bar{\mu};\pibar} \right]$ is of order $O_{\mathbbmss{P}}\left((n+N)^{-1/2}\an^{-1/2}\right)$. Define $\delta(\data) = \influence{\data;\thetatarget;\hat\mu;\hat\pi} - \influence{\data;\thetatarget;\Bar{\mu};\pibar}\in\RR^q$. Here, we show the stronger result 
\begin{equation}
    (n+N)^{1/2}\an^{1/2} \norm{\left(\mathbbmss{P}_{n+N} - \mathbbmss{E}\right) \left[ \delta(\data) \right]}{2} = o_{\mathbbmss{P}}\left(1\right)\,.
\end{equation}
By definition of the empirical process operator $\left(\mathbbmss{P}_{n+N} - \mathbbmss{E}\right)$, the term $\left(\mathbbmss{P}_{n+N} - \mathbbmss{E}\right)\left[ \delta(\data) \right]$ is centered. We need to focus on the scaled conditional variance, i.e., the scaled expected squared Euclidean norm conditional on $\hat{\mathbbmss{P}}$:
\begin{equation}
\begin{split}
    \EE{\norm{(n+N)^{1/2} \an^{1/2} \left(\mathbbmss{P}_{n+N} - \mathbbmss{E}\right)\left[\delta(\data) \right]}{2}^2\mid \hat{\mathbbmss{P}}}
    &= \frac{\an}{n+N} \sum_{i=1}^{n+N} \EE{\norm{\delta(\data_i) - \EE{\delta(\data)}}{2}^2 \mid \hat{\mathbbmss{P}}} \\
    &= \an \EE{\norm{\delta(\data) - \EE{\delta(\data)}}{2}^2 \mid \hat{\mathbbmss{P}}} \\
    &\leq \an \EE{\norm{\delta(\data)}{2}^2 \mid \hat{\mathbbmss{P}}} \,.
\end{split}
\end{equation}
Notice that this term goes to 0 with $n$ going to infinity under Assumption~\ref{ass:reg}, \textbf{a}. We now apply conditional Chebyshev inequality:
\begin{equation}
    \PP{\norm{(n+N)^{1/2} \an^{1/2} \left(\mathbbmss{P}_{n+N} - \mathbbmss{E}\right)\left[\delta(\data) \right]}{2} \geq \varepsilon \mid \hat{\mathbbmss{P}}} \leq \frac{\EE{\norm{(n+N)^{1/2} \an^{1/2} \left(\mathbbmss{P}_{n+N} - \mathbbmss{E}\right)\left[\delta(\data) \right]}{2}^2\mid \hat{\mathbbmss{P}}}}{\varepsilon^2} \to 0\,.
\end{equation}
By taking marginal expectations over the training set, one obtains the desired result.

\textit{Third term (Remainder).} Again, we need to show that the norm of $\eta((\hat\mu,\hat\pi),(\mutarget,\pitarget))$ equals $O_{\mathbbmss{P}}\left((n+N)^{-1/2}\an^{-1/2}\right)$. This directly follows from Assumption~\ref{ass:reg}, \textbf{b}. 

The previous results, combined, imply that $\norm{\hat\theta_{\hat\mu;\hat\pi} - \thetatarget}{2} = O_{\mathbbmss{P}}\left((n+N)^{-1/2}\an^{-1/2}\right)$.

\textbf{Central Limit Theorem.} We now turn to the asymptotic normality of $\hat\theta_{\hat\mu;\hat\pi}$ by showing that the first term on the right-hand side satisfies a form of Central Limit Theorem. This is enough to prove asymptotic normality of the estimator, as the remaining terms are already of order $o_\mathbbmss{P}((n+N)^{-1/2}\an^{-1/2})$.

By Assumption~\ref{ass:reg}, \textbf{c}, we have $\mineig{\varbar} \asymp \an^{-1}$. This implies:
\begin{equation}
\label{eq:eigen2}
    \norm{\varbar^{-1/2}}{2}^2 = \left(\maxeig{\varbar^{-1/2}}\right)^2 = \left(\mineig{\varbar}\right)^{-1} \asymp \an\,.
\end{equation}
By the tail condition \textbf{d} in Assumption~\ref{ass:reg}, we have that
\begin{equation}
    \begin{split}
        &\EE{\norm{\varbar^{-1/2}\influence{\data;\thetatarget;\Bar{\mu};\pibar}}{2}^2 \onea{\norm{\varbar^{-1/2}\influence{\data;\thetatarget;\Bar{\mu};\pibar}}{2} > \varepsilon \sqrt{n+N}}} \\
        &\leq \EE{\norm{\varbar^{-1/2}}{2}^2\norm{\influence{\data;\thetatarget;\Bar{\mu};\pibar}}{2}^2 \onea{\norm{\varbar^{-1/2}}{2}\norm{\influence{\data;\thetatarget;\Bar{\mu};\pibar}}{2} > \varepsilon \sqrt{n+N}}} \\
        &\leq \EE{C\an\norm{\influence{\data;\thetatarget;\Bar{\mu};\pibar}}{2}^2 \onea{\norm{\sqrt{\an}\influence{\data;\thetatarget;\Bar{\mu};\pibar}}{2} > \varepsilon \sqrt{n+N}}} \to 0\quad\text{as}\,n,N\to\infty\,,
    \end{split}
\end{equation}
where the first inequality is due to triangle inequality and the second inequality, for some constant $C>0$, is due to the chain of equalities in Equation~\ref{eq:eigen2}. Therefore the following Lindeberg condition also holds
\begin{equation}
    (n+N)^{-1} \sum_{i=1}^{n+N}\EE{\norm{\varbar^{-1/2}\influence{\data_i;\thetatarget;\Bar{\mu};\pibar}}{2}^2 \onea{\norm{\varbar^{-1/2}\influence{\data_i;\thetatarget;\Bar{\mu};\bar\pi}}{2} > \varepsilon \sqrt{n+N}}} \to 0\quad\text{as}\,n,N\to\infty\,,
\end{equation}
which implies, by Lindeberg-Feller Central Limit Theorem \citep{van2000asymptotic}, that
\begin{equation}
    (n+N)^{-1/2} \varbar^{-1/2} \sum_{i=1}^{n+N} \influence{\data_i;\thetatarget;\Bar{\mu};\Bar\pi} \dto \Normal{0}{I_q}\,.
\end{equation}

The last step is to show that:
\begin{equation}
    \left(\VV{\influence{\data;\thetatarget;\hat\mu;\hat\pi}}\right)^{-1/2}= \varbar^{-1/2} + o_\mathbbmss{P}(\an^{1/2}) \,.
\end{equation}
This can be shown by bounding the difference between the two variance matrices. By expanding the $\mathbbmss{L}_2$ norm and applying the Cauchy-Schwarz inequality, the difference between the second moments is dominated by the cross-term:
\begin{equation}
    \begin{split}
        \norm{\VV{\influence{\data;\thetatarget;\hat\mu;\hat\pi}} - \varbar}{2} &\le \EE{\norm{\influence{\data;\thetatarget;\hat\mu;\hat\pi} - \influence{\data;\thetatarget;\bar\mu;\pibar}}{2}^2} \\
        &+ 2 \sqrt{\EE{\norm{\influence{\data;\thetatarget;\bar\mu;\pibar}}{2}^2}} \sqrt{\EE{\norm{\influence{\data;\thetatarget;\hat\mu;\hat\pi} - \influence{\data;\thetatarget;\bar\mu;\pibar}}{2}^2}} \\
        &\leq o_\mathbbmss{P}(\an^{-1/2})\,,
    \end{split}
\end{equation}
where the last step follows from Assumptions~\ref{ass:reg}, \textbf{a} and \textbf{c}. This means:
\begin{equation}
    \an\ \VV{\influence{\data;\thetatarget;\hat\mu;\hat\pi}} \pto \an \varbar\,.
\end{equation}
Because Assumption~\ref{ass:reg}, \textbf{c}, guarantees that the scaled variance $\an \varbar$ is a positive definite matrix with eigenvalues bounded away from zero, we can apply the continuous mapping theorem to the inverse square root function:
\begin{equation}
    \left(\an \VV{\influence{\data;\thetatarget;\hat\mu;\hat\pi}} \right)^{-1/2} = (\an \varbar)^{-1/2} + o_\mathbbmss{P}(1)\,,
\end{equation}
which delivers the result. In summary, we have
\begin{equation}
    (n+N)^{1/2} \VV{\influence{\data;\thetatarget;\hat\mu;\hat\pi}}^{-1/2} \left(\hat\theta_{\hat\mu;\hat\pi} - \thetatarget\right) =  (n+N)^{-1/2} \varbar^{-1/2} \sum_{i=1}^{n+N} \influence{\data_i;\thetatarget;\Bar{\mu};\pibar} + o_\mathbbmss{P}(1)\,.
\end{equation}
\end{proof}

\subsection{Proof of Corollary~\ref{cor:an_ppi}}
\begin{proof}
    This is a straightforward application of Theorem~\ref{th:ansi_unknownpi}, where $f(X)$ is employed as an additional covariate. 
\end{proof}

\subsection{Proof of Theorem~\ref{th:convolution}}
\begin{proof}
    If the sequence of local experiments $\{\jPtarget_{n,N,h}:\, h\in\RR^q \}$ converges to a Gaussian shift experiment with covariance $\Sigma^\star$, then the regularity condition in Assumption~\ref{ass:eff}, \textbf{c}, implies the multivariate Hájek-Le Cam convolution theorem \citep[Theorem 8.8]{van2000asymptotic}. Therefore, for any estimator $\hat\theta_{n,N}$ regular at rate $\sqrt{(n+N)\an}$, there exists a probability measure $M$ such that
    \begin{equation}
       (n+N)^{1/2}\an^{1/2} \left(\hat\theta_{n,N} - \thetatarget \right) \dto \Normal{0}{\Sigma^\star} * M
    \end{equation}
    under the central law. The burden of the proof is thus to show \textit{local asymptotic normality} (LAN) of the sequence of local experiments. 

    Define $\delta_{n,N,h}(\data) = \left(\sqrt{p^\star_{n,N,h}(\data)} - \sqrt{p^\star_{n,N}(\data)}\right) / \sqrt{p^\star_{n,N}(\data)}$. Then:
    \begin{equation}
        \sqrt{p^\star_{n,N,h}(\data)} = \sqrt{p^\star_{n,N}(\data)} \left( 1 + \delta_{n,N,h}(\data) \right)\,,\quad \frac{p^\star_{n,N,h}(\data)}{p^\star_{n,N}(\data)} =  \left( 1 + \delta_{n,N,h}(\data) \right)^2\,.
    \end{equation}
    Therefore we can write the log-likelihood ratio as:
    \begin{equation}
        \Lambda_{n,N}(h) = \sum_{i=1}^{n+N} \log\frac{p^\star_{n,N,h}}{p^\star_{n,N}}(\data_i) = 2 \sum_{i=1}^{n+N} \log(1 + \delta_{n,N,h} (\data_i))\,.
    \end{equation}
    The goal is to show that the expansion
    \begin{equation}
        \Lambda_{n,N}(h) = \sum_{i=1}^{n+N} \ell_{n,N,h}(\data_i) - \frac{1}{2} (n+N) \EE{\ell_{n,N,h}^2(\data)} + o_{\jPtarget_{n,N}}(1)
    \end{equation}
    holds. By Taylor expansion, we have
     \begin{equation}
        \Lambda_{n,N}(h) = 2 \sum_{i=1}^{n+N} \log(1 + \delta_{n,N,h} (\data_i)) = 2 \sum_{i=1}^{n+N} \delta_{n,N,h} (\data_i) - \sum_{i=1}^{n+N} \delta_{n,N,h}^2 (\data_i) + \sum_{i=1}^{n+N} r(\delta_{n,N,h}) \,,
    \end{equation}
    where for every fixed $\varepsilon\in(0,1/2)$ there is a constant $C_\varepsilon$ such that $|\delta_{n,N,h}| \leq \varepsilon \implies |r(\delta_{n,N,h})| \leq C_\varepsilon |\delta_{n,N,h}|^3$.

    \textbf{The centered linear term equals the score sum.} Let $\zeta_{n,N,h}(\data) = \delta_{n,N,h}(\data) - \ell_{n,N,h}(\data)/2$. Then:
    \begin{equation}
        2 \sum_{i=1}^{n+N} \left( \delta_{n,N,h} (\data_i) - \EE{\delta_{n,N,h} (\data)}\right) = \sum_{i=1}^{n+N} \ell_{n,N,h} (\data_i) + 2 \sum_{i=1}^{n+N} \left(\zeta_{n,N,h}(\data_i) - \EE{\zeta_{n,N,h}(\data)} \right)\,.
    \end{equation}
    The second term is negligible because
    \begin{equation}
    \begin{split}
        \VV{2 \sum_{i=1}^{n+N} \left(\zeta_{n,N,h}(\data_i) - \EE{\zeta_{n,N,h}(\data)} \right)} &= 4(n+N)\VV{\zeta_{n,N,h}(\data)} \\
        &\leq 4(n+N)\EE{\zeta_{n,N,h}^2(\data)} \\
        &\to 0
    \end{split}
    \end{equation}
    by Lemma~\ref{lem:dqm_cons}. Therefore:
    \begin{equation}
        2 \sum_{i=1}^{n+N} \left( \delta_{n,N,h} (\data_i) - \EE{\delta_{n,N,h} (\data)}\right) = \sum_{i=1}^{n+N} \ell_{n,N,h} (\data_i) + o_{\jPtarget_{n,N}}(1)\,.
    \end{equation}
    
    \textbf{The square term is asymptotically deterministic.} Expanding, one gets:
    \begin{equation}
        \delta_{n,N,h}^2 (\data) = \left(\zeta_{n,N,h}(\data) + \frac{\ell_{n,N,h}(\data)}{2}\right)^2 = \zeta_{n,N,h}^2(\data) + \zeta_{n,N,h}(\data) \ell_{n,N,h}(\data) + \frac{\ell_{n,N,h}^2(\data)}{4}\,,
    \end{equation}
    and by summing over $i$:
    \begin{equation}
        \sum_{i=1}^{n+N} \delta_{n,N,h}^2 (\data_i) =  \sum_{i=1}^{n+N} \zeta_{n,N,h}^2(\data_i) +  \sum_{i=1}^{n+N} \zeta_{n,N,h}(\data_i) \ell_{n,N,h}(\data_i)  + \sum_{i=1}^{n+N} \frac{\ell_{n,N,h}^2(\data_i)}{4}\,.
    \end{equation}    
    By Lemma~\ref{lem:dqm_cons}, the first term is $o_{\jPtarget_{n,N}}(1)$ because its expectation is $(n+N)\EE{\zeta_{n,N,h}^2(\data)}\to0$. The middle term is again $o_{\jPtarget_{n,N}}(1)$. In fact, by Cauchy-Schwarz:
    \begin{equation}
        \EE{\left|\sum_{i=1}^{n+N} \zeta_{n,N,h}(\data_i) \ell_{n,N,h}(\data_i) \right|} \leq  \left((n+N)\EE{\zeta_{n,N,h}^2(\data)}\right)^{1/2} \left((n+N)\EE{\ell_{n,N,h}^2(\data)} \right)^{1/2} \to 0\,,
    \end{equation}
    where the convergence holds by Lemma~\ref{lem:dqm_cons} and Lemma~\ref{lem:l2_good}. Hence:
    \begin{equation}
        \sum_{i=1}^{n+N} \delta_{n,N,h}^2 (\data_i) = \sum_{i=1}^{n+N} \frac{\ell_{n,N,h}^2(\data_i)}{4} + o_{\jPtarget_{n,N}}(1)\,.
    \end{equation}
    By Lemma~\ref{lem:conv_to_constant}, $\sum_{i=1}^{n+N} \ell_{n,N,h}^2(\data_i) = (n+N) \EE{\ell_{n,N,h}^2(\data)} + o_{\jPtarget_{n,N}}(1)$. Therefore:
    \begin{equation}
        \sum_{i=1}^{n+N} \delta_{n,N,h}^2 (\data_i) = \frac{(n+N)\EE{\ell_{n,N,h}^2(\data)}}{4} + o_{\jPtarget_{n,N}}(1)\,.
    \end{equation}

    \textbf{The Taylor reminder is negligible.} Fix $\varepsilon\in(0,1/2)$, and define the event
    \begin{equation}
        E_{n,N,\varepsilon} = \left\{ \max_{1\leq i \leq n+N} | \delta_{n,N,h}(\data_i) | \leq \varepsilon \right\}\,.
    \end{equation}
    By Lemma~\ref{lem:delta_control}, $\PP{E_{n,N,\varepsilon}}\to1$. On this event, it holds that
    \begin{equation}
        \sum_{i=1}^{n+N} |r(\delta_{n,N,h})| \leq C_\varepsilon \sum_{i=1}^{n+N} |\delta_{n,N,h}|^3 \leq C_\varepsilon \left(\max_{1\leq i \leq n+N} | \delta_{n,N,h}(\data_i) |\right) \sum_{i=1}^{n+N} \delta_{n,N,h}^2 = o_{\jPtarget_{n,N}}(1)\,.
    \end{equation}

    \textbf{Final CLT.} By combining the previous results, one gets
    \begin{equation}
        \Lambda_{n,N}(h) = \sum_{i=1}^{n+N} \ell_{n,N,h}(\data_i) - \frac{1}{2} (n+N) \EE{\ell_{n,N,h}^2(\data)} + o_{\jPtarget_{n,N}}(1)\,.
    \end{equation}
    Finally, by Theorem~\ref{th:ansi_unknownpi}, we have
    \begin{equation}
        \sum_{i=1}^{n+N} \ell_{n,N,h}(\data_i) \dto \Normal{0}{h^T \left(\Sigma^\star\right)^{-1} h}\,, \quad (n+N) \EE{\ell_{n,N,h}^2(\data)} \to h^T \left(\Sigma^\star\right)^{-1} h\,,
    \end{equation}
    which in turn imply
    \begin{equation}
        \Lambda_{n,N}(h) \dto \Normal{-\frac{1}{2}h^T \left(\Sigma^\star\right)^{-1} h}{h^T \left(\Sigma^\star\right)^{-1} h}\,.
    \end{equation}
\end{proof}

\subsection{Proof of Corollary~\ref{cor:aipw}}
\begin{proof}
    When $\hat\mu$ and $\hat\pi$ converge at rates satisfying Assumption~\ref{ass:reg}, \textbf{b}, to $\bar\mu=\mutarget$ and $\pibar=\pitarget$, respectively, the asymptotic variance of $\ninfluence{\data;\thetatarget;\mutarget;\pitarget}$ is $\Sigma^\star$, and thus the estimator $\hat\theta_{\hat\mu;\hat\pi}$ attains the convolution bound. 
\end{proof}

\section{Additional theoretical results}
\label{supp_sec:highass}
Here, we provide some high-level conditions that imply Assumption~\ref{ass:reg}, \textbf{c}, for the multivariate outcome mean target. We assume that the propensity score $\pitarget$ is known, so that $\hat\pi=\pibar=\pitarget$. We first provide a simple yet useful Lemma.
\begin{lemma}
\label{lem:simple_pi}
    Assume $\hat\pi=\pibar=\pitarget$. Then
    \begin{equation}
        \EE{\left(\frac{R}{\hat\pi(X)} - 1\right)^2\mid X} = \frac{1}{\hat\pi(X)} - 1\,.
    \end{equation}
\end{lemma}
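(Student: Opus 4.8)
The plan is to prove the identity by a direct moment computation, exploiting the two structural facts that make the right-hand side collapse. Under Assumption~\ref{ass:decMAR}, conditionally on $X$ the label indicator satisfies $R \mid X \sim \text{Ber}(\pitarget(X))$, and by hypothesis $\hat\pi(X) = \Bar{\pi}(X) = \pitarget(X)$. Two consequences are all we need: first, $R$ is binary, so $R^2 = R$; second, $\EE{R \mid X} = \pitarget(X) = \hat\pi(X)$, and $\hat\pi(X)$ is $X$-measurable, hence it factors out of any conditional expectation given $X$.

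First I would expand the square inside the conditional expectation, writing $\left(\frac{R}{\hat\pi(X)} - 1\right)^2 = \frac{R^2}{\hat\pi(X)^2} - \frac{2R}{\hat\pi(X)} + 1$. Then I would take $\EE{\cdot \mid X}$ of each term, pulling the deterministic factors $\hat\pi(X)^{-2}$ and $\hat\pi(X)^{-1}$ outside the conditional expectation since they are functions of $X$ alone. Substituting $R^2 = R$ and $\EE{R \mid X} = \hat\pi(X)$ gives $\frac{\hat\pi(X)}{\hat\pi(X)^2} - \frac{2\hat\pi(X)}{\hat\pi(X)} + 1 = \frac{1}{\hat\pi(X)} - 2 + 1$, which simplifies to the claimed $\frac{1}{\hat\pi(X)} - 1$.

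There is no genuine obstacle here: the only bookkeeping subtlety is to treat $\hat\pi(X)$ as $X$-measurable so it can be moved through the conditional expectation, and to invoke $R^2 = R$, which holds precisely because $R$ is an indicator. The statement is isolated as a Lemma not for its difficulty but because it is the computational workhorse used later to verify the variance-scaling condition in Assumption~\ref{ass:reg}, \textbf{c}, for the outcome-mean target, where $\frac{1}{\hat\pi(X)} - 1$ is exactly the conditional second moment of the inverse-propensity reweighting factor that governs how the asymptotic variance scales like $\an^{-1}$.
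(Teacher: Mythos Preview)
Your proof is correct and follows essentially the same approach as the paper's: expand the square, use $R^2 = R$ for the binary indicator, pull $\hat\pi(X)$ through the conditional expectation as $X$-measurable, and substitute $\EE{R\mid X} = \hat\pi(X)$ to collapse the expression to $\hat\pi(X)^{-1} - 1$. The paper writes the expansion with a common denominator first, whereas you expand term by term, but the algebra and the two key facts invoked are identical.
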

\begin{proof}
The following chain holds:
\begin{equation}
    \begin{split}
        \EE{\left(\frac{R}{\hat\pi(X)} - 1\right)^2\mid X} &= \EE{\frac{R^2 + \hat\pi^2(X) - 2R\hat\pi(X)}{\hat\pi^2(X)} \mid X} \\
        &=\EE{\frac{R}{\hat\pi^2(X)} \mid X} + 1 - 2\EE{\frac{R}{\hat\pi(X)} \mid X} \\
        &=\frac{\EE{R\mid X}}{\hat\pi^2(X)} + 1 - 2\frac{\EE{R \mid X}}{\hat\pi(X)}  \\
        &=\frac{1}{\hat\pi(X)} - 1 \,,
    \end{split}
\end{equation}
where we are using the facts $R^2=R$ and $\hat\pi(X)=\pitarget(X)$.
\end{proof}

\begin{proposition}[High-level conditions for multivariate outcome mean target]
\label{prop:high_level}
    Let $\thetatarget=\EE{Y}$ and $\hat\pi=\pibar=\pitarget$. Assume that, for all $\ell=1,\dots,k$, $\EE{\left(Y_\ell-\mutarget_\ell(X)\right)^2 \mid X} \geq \sigma^2_\ell$, $\EE{\left(Y_\ell-\Bar{\mu}_\ell(X)\right)^2\mid X} \leq \Bar{\sigma}^2_\ell$ and $\VV{Y_\ell}\leq\Bar{\sigma}^2_\ell$. Then $\mineig{\VV{\influence{\data;\thetatarget;\Bar{\mu};\pitarget}}} \asymp \an^{-1}$. 
\end{proposition}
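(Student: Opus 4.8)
The plan is to derive a closed-form expression for $\VV{\influence{\data;\thetatarget;\Bar{\mu};\pitarget}}$ and read off its eigenvalue scaling. Write $w = R/\pitarget(X)$, $g(X) = \Bar{\mu}(X) - \thetatarget$, $\delta = Y - \Bar{\mu}(X)$, and $b(X) = \mutarget(X) - \Bar{\mu}(X)$, so that (suppressing arguments) $\varphi = g + w\delta$. I would expand $\EE{\varphi\varphi^T}$ using $R \indep Y \mid X$ together with $\EE{w\mid X} = 1$ and $\EE{w^2\mid X} = \pitarget(X)^{-1}$ (the latter is exactly Lemma~\ref{lem:simple_pi}), and then simplify via the algebraic identity $g + b = \mutarget(X) - \thetatarget$. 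This yields
\begin{equation*}
\VV{\influence{\data;\thetatarget;\Bar{\mu};\pitarget}} = \EE{\pitarget(X)^{-1} V_Y(X)} + \VV{\mutarget(X)} + \EE{\left(\pitarget(X)^{-1}-1\right) b(X) b(X)^T},
\end{equation*}
where $V_Y(X) = \operatorname{Cov}(Y \mid X)$ is the true conditional covariance of the outcome. The structural point that makes the argument work is that all three summands are positive semidefinite: since $\pitarget \le 1$ we have $\pitarget^{-1} - 1 \ge 0$, while $V_Y(X)$, $\VV{\mutarget(X)}$, and $b b^T$ are PSD by construction.

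For the upper bound $\mineig{\VV{\varphi}} \le C\an^{-1}$, I would use that the smallest eigenvalue of a PSD matrix is bounded by any of its diagonal entries, $\mineig{\VV{\varphi}} \le \VV{\varphi_\ell}$ for a fixed coordinate $\ell$. The $\ell$-th diagonal entry of the decomposition is $\EE{\pitarget^{-1}\operatorname{Var}(Y_\ell\mid X)} + \VV{\mutarget_\ell(X)} + \EE{(\pitarget^{-1}-1) b_\ell^2}$. Here $\operatorname{Var}(Y_\ell\mid X) = \EE{(Y_\ell - \mutarget_\ell)^2\mid X} \le \EE{(Y_\ell-\Bar{\mu}_\ell)^2\mid X}\le\Bar{\sigma}_\ell^2$ (the conditional mean minimizes conditional MSE), the bias obeys $b_\ell(X)^2 = \EE{(Y_\ell-\Bar{\mu}_\ell)^2\mid X} - \operatorname{Var}(Y_\ell\mid X) \le \Bar{\sigma}_\ell^2 - \sigma_\ell^2$ pointwise, and $\VV{\mutarget_\ell(X)} \le \VV{Y_\ell} \le \Bar{\sigma}_\ell^2$ by the law of total variance. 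Since $\an^{-1} = \EE{\pitarget^{-1}(X)} \to \infty$, these combine to give $\VV{\varphi_\ell} \le C\an^{-1}$.

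For the lower bound $\mineig{\VV{\varphi}} \ge c\an^{-1}$, I would discard the last two PSD summands using Loewner monotonicity, $\VV{\varphi} \succeq \EE{\pitarget^{-1} V_Y(X)}$, so that $\mineig{\VV{\varphi}} \ge \min_{\|u\|=1}\EE{\pitarget(X)^{-1}\, u^T V_Y(X) u}$; if $\mineig{V_Y(X)} \ge \underline{\sigma}^2 := \min_\ell \sigma_\ell^2$ almost surely, then $u^T V_Y(X) u \ge \underline{\sigma}^2$ and the bound becomes $\underline{\sigma}^2\,\EE{\pitarget^{-1}} = \underline{\sigma}^2\an^{-1}$. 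This last step is the main obstacle: the hypothesis $\EE{(Y_\ell-\mutarget_\ell)^2\mid X}\ge\sigma_\ell^2$ controls only the diagonal of $V_Y(X)$, and diagonal lower bounds do not in general bound the smallest eigenvalue — two conditionally perfectly correlated coordinates create a null direction while leaving all diagonal variances positive. The per-coordinate bounds do yield $\mineig{V_Y(X)}\ge\underline{\sigma}^2$ precisely when the coordinates of $Y$ are conditionally uncorrelated given $X$ (so that $V_Y(X)$ is diagonal), which holds in the linear-Gaussian data-generating process of Section~\ref{sec:sim}, where $V_Y(X) = I_k$. I would therefore either strengthen the statement with an explicit $\mineig{V_Y(X)}\ge\underline{\sigma}^2$ condition or restrict to conditionally uncorrelated outcomes; the remainder of the proof is routine bookkeeping of the three-term decomposition.
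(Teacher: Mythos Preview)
Your route differs from the paper's. The paper works coordinate by coordinate: it expands $\varbar[\ell,\ell]$ directly (reaching the same expression you would obtain from the $\ell$-th diagonal of your three-term decomposition), bounds it above and below by $\Bar{\sigma}_\ell^2\an^{-1}$ and $\sigma_\ell^2\an^{-1}$ via Lemma~\ref{lem:simple_pi}, then controls off-diagonals by Cauchy--Schwarz $|\varbar[\ell_1,\ell_2]|\le\sqrt{\varbar[\ell_1,\ell_1]\varbar[\ell_2,\ell_2]}$, and concludes from ``all entries are $\asymp\an^{-1}$'' that $\mineig{\varbar}\asymp\an^{-1}$. Your approach instead produces a clean PSD splitting of the full matrix and uses Loewner monotonicity for the lower bound.

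The obstacle you flag is real, and it is a gap the paper's argument shares. Knowing that every entry of a PSD matrix is $O(\an^{-1})$ and every diagonal entry is $\Omega(\an^{-1})$ does \emph{not} give $\mineig{\varbar}\gtrsim\an^{-1}$; your perfectly-correlated-coordinates counterexample breaks the paper's final step just as it breaks the naive eigenvalue reading of your decomposition. Your Loewner route $\varbar\succeq\EE{\pitarget^{-1}V_Y(X)}$ makes the missing ingredient explicit --- one needs control of $\mineig{V_Y(X)}$, not just of its diagonal --- and your proposed strengthening (either assume $\mineig{V_Y(X)}\ge\underline{\sigma}^2$ or restrict to conditionally uncorrelated outcomes, as in the simulation design) is exactly the right patch. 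With that additional hypothesis your argument is complete and, in my view, cleaner than the paper's entry-wise calculation.
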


\begin{proof}
    We first derive additional uniform lower bounds for $\EE{\left(Y_\ell-\Bar\mu_\ell(X)\right)^2 \mid X}$ and $\VV{Y_\ell}$. By iterated expectation, we have:
    \begin{equation}
        \begin{split}
            \EE{\left(Y_\ell-\Bar\mu_\ell(X)\right)^2 \mid X} &= \EE{\left(Y_\ell-\mutarget_\ell(X) + \mutarget_\ell(X) - \Bar\mu_\ell(X)\right)^2 \mid X} \\
            &= \EE{\left(Y_\ell-\mutarget_\ell(X)\right)^2 \mid X} + \EE{ \left(\mutarget_\ell(X) - \Bar\mu_\ell(X)\right)^2 \mid X} \\
            &+ 2 \EE{\left(Y_\ell-\mutarget_\ell(X)\right) \left(\mutarget_\ell(X) - \Bar\mu_\ell(X)\right) \mid X} \\
            &= \EE{\left(Y_\ell-\mutarget_\ell(X)\right)^2 \mid X} + \EE{ \left(\mutarget_\ell(X) - \Bar\mu_\ell(X)\right)^2 \mid X} \\
            &\geq \sigma^2_\ell\,.
        \end{split}
    \end{equation}
    Similarly, we can show that:
    \begin{equation}
        \begin{split}
            \VV{Y_\ell} &= \EE{\left(Y_\ell - \EE{Y_\ell}\right)^2} \\
            &= \EE{\EE{\left(Y_\ell-\mutarget_\ell(X)\right)^2 \mid X} + \EE{\left(\mutarget_\ell(X) - \EE{Y_\ell}\right)^2 \mid X} } \\
            &\geq \sigma^2_\ell\,.
        \end{split}
    \end{equation}
    We now show that each diagonal element of $\varbar$ is of the same rate as $\an^{-1}$. In particular, for each $\ell=1,\dots,k$ we have:
    \begin{equation}
    \begin{split}
          \varbar[\ell,\ell] &= \EE{\influence{\data;\thetatarget;\Bar{\mu};\pitarget}_{\ell}^2} \\
          &=\EE{\left(\Bar\mu_\ell(X) + \frac{R\left(Y_\ell - \Bar\mu_\ell(X) \right)}{\pitarget(X)} - \EE{Y_\ell} \right)^2} \\
          &=\EE{\left(\frac{\left(R - \pitarget(X)\right) \left(Y_\ell - \Bar\mu_\ell(X)\right)}{\pitarget(X)} + Y_\ell - \EE{Y_\ell} \right)^2} \\
          &= \EE{\frac{\left(R - \pitarget(X)\right)^2 }{\left(\pitarget(X)\right)^2} \left(Y_\ell - \Bar\mu_\ell(X)\right)^2} + \VV{Y_\ell} \\
          &= \EE{\frac{\left(1 - \pitarget(X)\right)}{\pitarget(X)} \left(Y_\ell - \Bar\mu_\ell(X)\right)^2} + \VV{Y_\ell}\,,
    \end{split}
    \end{equation}
    where we are using Lemma~\ref{lem:simple_pi}. Therefore we have that
    \begin{equation}
        \an\varbar[\ell,\ell] \geq \an \left(\left(\frac{1}{\pitarget(X)} - 1\right)  \sigma^2_\ell +  \sigma^2_\ell \right) = \sigma^2_\ell > 0\,,
    \end{equation}
    and similarly we can show that
    \begin{equation}
        \an\varbar[\ell,\ell] \leq \an \left(\left(\frac{1}{\pitarget(X)} - 1\right)  \Bar{\sigma}^2_\ell +  \Bar{\sigma}^2_\ell \right) = \Bar{\sigma}^2_\ell < \infty\,.
    \end{equation}
    The last two inequalities imply that $\varbar[\ell,\ell] \asymp \an^{-1}$. We can now exploit Cauchy-Schwarz to expand the previous results to off-diagonal elements of $\varbar$. In fact:
    \begin{equation}
        |\varbar[\ell_1,\ell_2]| \leq \ \sqrt{\varbar[\ell_1,\ell_1] \varbar[\ell_2, \ell_2]} \asymp \an^{-1} \,.
    \end{equation}
    Therefore, for all $\ell_1=1,\dots,k$ and $\ell_2=1,\dots,k$, it holds that
    \begin{equation}
        \varbar[\ell_1,\ell_2] \asymp \an^{-1}\,,
    \end{equation}
    which concludes the proof.
\end{proof}

\begin{lemma}[Direct consequence of QMD]
\label{lem:dqm_cons}
Under Assumption~\ref{ass:eff}, \textbf{b}, it holds that 
\begin{equation}
    (n+N)\EE{\left( \delta_{n,N,h}(\data) - \frac{1}{2}\ell_{n,N,h}(\data) \right)^2} \to 0\,.
\end{equation}
\end{lemma}
\begin{proof}
    The claim is just a manipulation of Assumption~\ref{ass:eff} \textbf{b}:
    \begin{equation}
    \begin{split}
        (n+N)\EE{\left( \delta_{n,N,h}(\data) - \frac{1}{2}\ell_{n,N,h}(\data) \right)^2} &= (n+N) \int \left( \frac{\sqrt{p^\star_{n,N,h}(\data)} - \sqrt{p^\star_{n,N}(\data)}}{\sqrt{p^\star_{n,N}(\data)}} - \frac{1}{2}\ell_{n,N,h}(\data) \right)^2 p^\star_{n,N}(\data) \, d\nu_{n,N} \\
        &= (n+N) \int \left( \sqrt{p^\star_{n,N,h}(\data)} - \sqrt{p^\star_{n,N}(\data)} - \frac{1}{2}\ell_{n,N,h}(\data)\sqrt{p^\star_{n,N}(\data)} \right)^2 \, d\nu_{n,N} \\
        &\to 0\,.
    \end{split}
    \end{equation}
\end{proof}

\begin{lemma}
\label{lem:l2_good}
    Under Assumption~\ref{ass:eff}, \textbf{a}, it holds that
    \begin{equation}
        (n+N) \EE{\ell_{n,N,h}^2(\data)} \to h^T \left(\Sigma^\star\right)^{-1} h\,.
    \end{equation}
\end{lemma}
\begin{proof}
Direct evaluation implies:
    \begin{equation}
    \begin{split}
        (n+N)\EE{\ell_{n,N,h}^2(\data)} &= (n+N) \EE{\left((n+N)^{-1/2} \an^{-1/2} h^T \Sigma_{n,N}^{-1} \ninfluence{\data;\thetatarget}\right)^2} \\
        &= \frac{1}{\an} h^T \Sigma_{n,N}^{-1} \VV{\ninfluence{\data;\thetatarget}} \Sigma_{n,N}^{-1} h \\
        &= h^T \left(\an\Sigma_{n,N}\right)^{-1} h \\
        &\to h^T \left(\Sigma^\star\right)^{-1} h \,,
    \end{split}
    \end{equation}
    where the convergence holds under Assumption~\ref{ass:eff}, \textbf{a}.
\end{proof}

\begin{lemma}
    \label{lem:conv_to_constant}
    Under Assumption~\ref{ass:reg}, \textbf{d}, and Assumption~\ref{ass:eff}, \textbf{a}, it holds that
    \begin{equation}
        \sum_{i=1}^{n+N} \ell_{n,N,h}^2(\data_i) - (n+N) \EE{\ell_{n,N,h}^2(\data)} \pto 0\,.
    \end{equation}
\end{lemma}
\begin{proof}
    Let $S_{n,N} =  \sum_{i=1}^{n+N} \left(\ell_{n,N,h}^2(\data_i) -  \EE{\ell_{n,N,h}^2(\data)}\right)$. Set $\varepsilon>0$ and decompose $\ell_{n,N,h}^2(\data_i) = U^{(\varepsilon)}_{n,N} + V^{(\varepsilon)}_{n,N}$ where $ U^{(\varepsilon)}_{n,N} = \ell_{n,N,h}^2 \onea{|\ell_{n,N,h}|\leq \varepsilon}$ and $ V^{(\varepsilon)}_{n,N}=  \ell_{n,N,h}^2 \onea{|\ell_{n,N,h}|> \varepsilon}$. Then 
    \begin{equation}
        S_{n,N} = \sum_{i=1}^{n+N} \left( U^{(\varepsilon)}_{n,N}(\data_i) - \EE{U^{(\varepsilon)}_{n,N}(\data)} \right) + \sum_{i=1}^{n+N} \left( V^{(\varepsilon)}_{n,N}(\data_i) - \EE{V^{(\varepsilon)}_{n,N}(\data)} \right)\,.
    \end{equation}
    For the truncated part, by independence we have
    \begin{equation}
    \begin{split}
         \VV{\sum_{i=1}^{n+N} \left( U^{(\varepsilon)}_{n,N}(\data_i) - \EE{U^{(\varepsilon)}_{n,N}(\data)} \right)} &= (n+N) \VV{U^{(\varepsilon)}_{n,N}(\data)} \\
         &\leq (n+N) \EE{\left(U^{(\varepsilon)}_{n,N}(\data)\right)^2} \\
         &\leq (n+N) \varepsilon^2 \EE{\ell_{n,N,h}^2(\data)}\,.
    \end{split}
    \end{equation}
    By Lemma~\ref{lem:l2_good}, the right-hand side converges to the constant $\varepsilon^2 h^T \left(\Sigma^\star\right)^{-1} h$. Hence, by Chebyshev,
    \begin{equation}
        \sum_{i=1}^{n+N} U^{(\varepsilon)}_{n,N}(\data_i) - \EE{U^{(\varepsilon)}_{n,N}(\data)} = O_{\jPtarget_{n,N}}(\varepsilon)\,.
    \end{equation}
    For the tail part, given that $V^{(\varepsilon)}_{n,N}$ is non-negative, we have
    \begin{equation}
    \begin{split}
        \EE{ \left| \sum_{i=1}^{n+N} \left( V^{(\varepsilon)}_{n,N}(\data_i) - \EE{V^{(\varepsilon)}_{n,N}(\data)} \right) \right|} &\leq 2(n+N) \EE{V^{(\varepsilon)}_{n,N}(\data)} \\
        &\leq 2(n+N) \EE{\ell_{n,N,h}^2 \onea{|\ell_{n,N,h}|> \varepsilon}} \\
        &\to 0\,,
    \end{split}
    \end{equation}
    by Assumption~\ref{ass:reg}, \textbf{d}. Summarizing, $S_{n,N} = O_{\jPtarget_{n,N}}(\varepsilon) + o_{\jPtarget_{n,N}}(1)$. By letting $\varepsilon\to0$, we get $S_{n,N} = o_{\jPtarget_{n,N}}(1)$.
\end{proof}

\begin{lemma}[Control of $\delta_{n,N,h}$]
\label{lem:delta_control}
Under Assumption~\ref{ass:reg}, \textbf{d}, and Assumption~\ref{ass:eff}, \textbf{a} and \textbf{b}, the following hold:
\begin{enumerate}
    \item $\max_{1\leq i \leq n+N} | \delta_{n,N,h}(\data_i) | \pto 0\,;$
    \item $\sum_{i=1}^{n+N} \delta_{n,N,h}^2(\data_i) = O_{\jPtarget_{n,N}}(1)\,.$
\end{enumerate}
\end{lemma}
\begin{proof}
    Define $\zeta_{n,N,h}(\data) = \delta_{n,N,h}(\data) - \ell_{n,N,h}(\data)/2$. Then, for any $\varepsilon>0$, we have
    \begin{equation}
        \PP{\max_{1\leq i \leq n+N} |\zeta_{n,N,h}(\data_i)| > \varepsilon} \leq \frac{(n+N) \EE{\zeta_{n,N,h}^2(\data)}}{\varepsilon^2} \to 0
    \end{equation}
    by Lemma~\ref{lem:dqm_cons}. Moreover, fixed a $\varepsilon>0$, by union bound and Markov, we have
    \begin{equation}
         \PP{\max_{1\leq i \leq n+N} |\ell_{n,N,h}(\data_i)| > \varepsilon} \leq (n+N) \PP{|\ell_{n,N,h}(\data_i)| > \varepsilon} \leq  \frac{(n+N)\EE{\ell_{n,N,h}^2 \onea{|\ell_{n,N,h}|> \varepsilon}}}{\varepsilon^2} \to 0
    \end{equation}
    by Assumption~\ref{ass:reg}, \textbf{d}. These two facts together imply
    \begin{equation}
        \max_{1\leq i \leq n+N} |\delta_{n,N,h}(\data_i)| \leq \frac{1}{2} \max_{1\leq i \leq n+N} |\ell_{n,N,h}(\data_i)| + \max_{1\leq i \leq n+N} |\zeta_{n,N,h}(\data_i)| \pto 0\,.
    \end{equation}
    For the second claim, notice that
    \begin{equation}
    \begin{split}
        \sum_{i=1}^{n+N} \delta_{n,N,h}^2(\data_i) &= (n+N) \EE{\delta_{n,N,h}^2(\data)} \\
        &= (n+N) \EE{\zeta_{n,N,h}^2(\data) + \zeta_{n,N,h}(\data) \ell_{n,N,h}(\data) + \frac{\ell_{n,N,h}^2(\data)}{4}}\,.
    \end{split}
    \end{equation}
    By Lemma~\ref{lem:dqm_cons}, the first term in the expectation is negligible. By Cauchy-Schwarz, the second term is again negligible. In fact:
    \begin{equation}
        |(n+N)\zeta_{n,N,h}(\data) \ell_{n,N,h}(\data)| \leq \left((n+N) \EE{\zeta_{n,N,h}^2(\data)} \right)^{1/2}  \left((n+N) \EE{\ell_{n,N,h}^2(\data)} \right)^{1/2} \to 0
    \end{equation}
    by the previous point and Lemma~\ref{lem:l2_good}. Therefore, by Lemma~\ref{lem:l2_good} again, we get
    \begin{equation}
        \sum_{i=1}^{n+N} \delta_{n,N,h}^2(\data_i) \to \frac{1}{4} h^T \left(\Sigma^\star\right)^{-1} h\,.
    \end{equation}
    Hence $\sum_{i=1}^{n+N} \delta_{n,N,h}^2(\data_i) = O_{\jPtarget_{n,N}}(1)$ by Markov.
\end{proof}

\section{Practical considerations}

\subsection{Examples of decaying models}
\label{supp_sec:dec_models}
In the semi‐supervised setting, decaying overlap means the probability of
seeing a label ($R=1$) tends to zero across the covariate space as $n,N\to\infty$. Here, we illustrate explicit models that generate this behavior. 

\subsubsection{Logistic regression with decaying offset}
A simple example is a logistic selection model with an intercept that grows large and
negative. This problem has received considerable attention: \citet{zhang2023decaying} focuses on logistic regression with decaying offset; \citet{owen2007infinitely} and \citet{wang2020logistic} study logistic regression with diverging intercept. The general model reads:
\begin{equation}
    \pitarget(X) = \text{expit}\left(\log\left(\frac{n}{n+N}\right) + X^T \gamma \right)\,,
\end{equation}
where $\text{expit}(x) = \exp(x) / (1 + \exp(x))$.

\subsubsection{Decaying MCAR}
\citet{zhang2023decaying} present a second model for the decaying propensity score, which consists in a simple decaying sequence independent from the covariates:
\begin{equation}
    \pitarget(X) = \frac{n}{n+N}\,.
\end{equation}
Equivalently, this model is a decaying \textit{missing completely at random} propensity score model.

\subsubsection{Nonparametric model with uniform decay}
Trying to move away from strict parametric models, here we present a third model for the decaying propensity score that decouples the decaying component and the dependence on the covariates. By doing so, the functional form of the propensity score as a function of $X$ is free to be arbitrarily complex, allowing for non-parametric modeling. The model reads as:
\begin{equation}
\label{eq:pinonparam}
    \pitarget(X) = \an \pi^\star_0(X)\,,
\end{equation}
where, by Assumption~\ref{ass:decMAR}, $\EE{\left(\pi^\star_0(X)\right)^{-1}}=1$. This model is fairly general (it is a generalization of the previous two), and captures most of the nuisances of the \ds{} setting.

\subsection{Sufficient and verifiable conditions for Assumption~\ref{ass:eff}}
\label{supp_sec:eff_examples}

Here, we provide concrete, verifiable sufficient conditions for Assumption~\ref{ass:eff} under the canonical example of the outcome mean target $\thetatarget = \EE{Y}$, with
$Y \in \RR^k$.

Throughout this Section we work under Assumption~\ref{ass:decMAR} and we
let $\pitarget(X) = \an \pi^\star_0(X)$, where $\pi^\star_0(X)$ satisfies $\EE{\left(\pi^\star_0(X)\right)^{-1}} = 1$. This is the nonparametric uniform-decay model of Appendix~\ref{supp_sec:dec_models}, which subsumes both the decaying MCAR model ($\pi^\star_0 = 1$) and the logistic model with decaying offset.
We write $\sigma^2_\ell(X) = \EE{\left(Y_\ell-\mutarget_\ell(X)\right)^2 \mid X}$ for the conditional variance of the $\ell$-th outcome coordinate.

Condition~\ref{ass:eff}, \textbf{a}, regards the existence of the limit $\Sigma^\star$. This requires that $\an\Sigma_{n,N}$ converges. From Proposition~\ref{prop:high_level} and Assumption~\ref{ass:reg}, \textbf{c}, we already know that $\Sigma_{n,N} \asymp \an^{-1}$ for each coordinate $\ell$. So $\an\Sigma_{n,N}$ converges to a finite positive limit whenever $\sigma^2_\ell(X) > 0$.

Condition~\ref{ass:eff}, \textbf{b}, is QMD. For the outcome mean, the local perturbation reduces to tilting the outcome distribution by a score proportional to $Y-\thetatarget$. A sufficient verifiable condition is that the outcome $Y$ has a sub-Gaussian or bounded distribution conditionally on $X$, which ensures the QMD remainder in Assumption~\ref{ass:eff}, \textbf{b}, vanishes at the required rate. More explicitly, if $\EE{Y^4\mid X} \leq C < \infty$ (finite fourth moments) then the QMD condition holds. The local shift condition then follows from a standard Taylor expansion of the functional $\theta(\jPtarget_{n,N,h})$ around $\jPtarget_{n,N}$, using the fact that the influence function for the mean is simply $Y - \thetatarget$.

Condition~\ref{ass:eff}, \textbf{c}, is regularity. This is the weakest of the three; it essentially requires that estimators do not have asymptotic distributions that depend on the local drift $h$, which holds for any RAL estimator.

\section{Further simulation results}
\label{supp_sec:sim}

\begin{figure}[h]
    \centering
    \includegraphics[width=\linewidth]{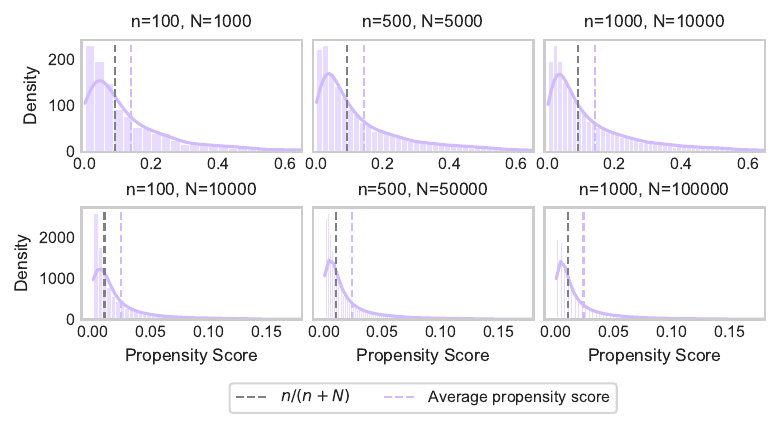}
    \caption{True propensity score distribution across simulation parameters when the missingness mechanism is governed by a logistic model with offset at $n/(n+N)$. Vertical dashed lines represent empirical average (purple) and theoretical one (grey).}
    \label{fig:logistic_ps}
\end{figure}

To compute joint coverage, we evaluate the empirical fraction of times $\Delta$ in which $\thetatarget\in\RR^q$ falls into the asympotic $1-\alpha$ confidence region of $\hat\theta$. In particular, we employ Hotelling one sample $T^2$ test, which rejects the null hypothesis $\hat\theta = \thetatarget$ at $1-\alpha$ level if 
\begin{equation}
    \left(\hat\theta - \thetatarget\right)^T\hat{V}_{n,N}^{-1}  \left(\hat\theta - \thetatarget\right) \geq \frac{q}{n+N-q} F_{q,n+N-q,1-\alpha}\,,
\end{equation}
where $\hat{V}_{n,N}$ is an estimate of the covariance matrix and $F_{q,n+N-q,1-\alpha}$ is the $1-\alpha$ quantile of a $F$-distribution with $q$ and $n+N-q$ degrees of freedom. 

\begin{table}[ht]
\centering
\begin{tabular}{rrlll|cccc}
\toprule
\textbf{n} & \textbf{N} & \textbf{Setting} & $\mu$ \textbf{model} & $\pi$ \textbf{model} & \texttt{AIPW} & \texttt{OR} & \texttt{IPW} & \texttt{Naive} \\
\midrule
100    & 1000   & Decaying MCAR & Constant   & Constant   & 0.282  & 0.282  & 0.282  & 0.282  \\
100    & 1000   & Decaying MCAR & Constant   & Logistic   & 0.233  & 0.282  & 0.216  & 0.282  \\
100    & 1000   & Decaying MCAR & Linear     & Constant   & 0.081  & 0.081  & 0.282  & 0.282  \\
100    & 1000   & Decaying MCAR & Linear     & Logistic   & 0.082  & 0.081  & 0.216  & 0.282  \\
100    & 1000   & Decaying logistic & Constant   & Constant   & 0.910  & 0.910  & 0.910  & 0.910  \\
100    & 1000   & Decaying logistic & Constant   & Logistic   & 0.683  & 0.910  & 0.582  & 0.910  \\
100    & 1000   & Decaying logistic & Linear     & Constant   & 0.081  & 0.081  & 0.910  & 0.910  \\
100    & 1000   & Decaying logistic & Linear     & Logistic   & 0.086  & 0.081  & 0.582  & 0.910  \\
100    & 10000  & Decaying MCAR & Constant   & Constant   & 0.267  & 0.267  & 0.267  & 0.267  \\
100    & 10000  & Decaying MCAR & Constant   & Logistic   & 0.200  & 0.267  & 0.179  & 0.267  \\
100    & 10000  & Decaying MCAR & Linear     & Constant   & 0.029  & 0.029  & 0.267  & 0.267  \\
100    & 10000  & Decaying MCAR & Linear     & Logistic   & 0.032  & 0.029  & 0.179  & 0.267  \\
100    & 10000  & Decaying logistic & Constant   & Constant   & 1.158  & 1.158  & 1.158  & 1.158  \\
100    & 10000  & Decaying logistic & Constant   & Logistic   & 0.585  & 1.158  & 0.492  & 1.158  \\
100    & 10000  & Decaying logistic & Linear     & Constant   & 0.028  & 0.028  & 1.158  & 1.158  \\
100    & 10000  & Decaying logistic & Linear     & Logistic   & 0.033  & 0.028  & 0.492  & 1.158  \\
500    & 5000   & Decaying MCAR & Constant   & Constant   & 0.123  & 0.123  & 0.123  & 0.123  \\
500    & 5000   & Decaying MCAR & Constant   & Logistic   & 0.045  & 0.123  & 0.046  & 0.123  \\
500    & 5000   & Decaying MCAR & Linear     & Constant   & 0.036  & 0.036  & 0.123  & 0.123  \\
500    & 5000   & Decaying MCAR & Linear     & Logistic   & 0.036  & 0.036  & 0.046  & 0.123  \\
500    & 5000   & Decaying logistic & Constant   & Constant   & 0.868  & 0.868  & 0.868  & 0.868  \\
500    & 5000   & Decaying logistic & Constant   & Logistic   & 0.242  & 0.868  & 0.217  & 0.868  \\
500    & 5000   & Decaying logistic & Linear     & Constant   & 0.036  & 0.036  & 0.868  & 0.868  \\
500    & 5000   & Decaying logistic & Linear     & Logistic   & 0.036  & 0.036  & 0.217  & 0.868  \\
500    & 50000  & Decaying MCAR & Constant   & Constant   & 0.120  & 0.120  & 0.120  & 0.120  \\
500    & 50000  & Decaying MCAR & Constant   & Logistic   & 0.032  & 0.120  & 0.031  & 0.120  \\
500    & 50000  & Decaying MCAR & Linear     & Constant   & 0.013  & 0.013  & 0.120  & 0.120  \\
500    & 50000  & Decaying MCAR & Linear     & Logistic   & 0.014  & 0.013  & 0.031  & 0.120  \\
500    & 50000  & Decaying logistic & Constant   & Constant   & 1.131  & 1.131  & 1.131  & 1.131  \\
500    & 50000  & Decaying logistic & Constant   & Logistic   & 0.228  & 1.131  & 0.198  & 1.131  \\
500    & 50000  & Decaying logistic & Linear     & Constant   & 0.013  & 0.014  & 1.131  & 1.131  \\
500    & 50000  & Decaying logistic & Linear     & Logistic   & 0.014  & 0.014  & 0.198  & 1.131  \\
1000   & 10000  & Decaying MCAR & Constant   & Constant   & 0.087  & 0.087  & 0.087  & 0.087  \\
1000   & 10000  & Decaying MCAR & Constant   & Logistic   & 0.029  & 0.087  & 0.030  & 0.087  \\
1000   & 10000  & Decaying MCAR & Linear     & Constant   & 0.026  & 0.026  & 0.087  & 0.087  \\
1000   & 10000  & Decaying MCAR & Linear     & Logistic   & 0.026  & 0.026  & 0.030  & 0.087  \\
1000   & 10000  & Decaying logistic & Constant   & Constant   & 0.866  & 0.866  & 0.866  & 0.866  \\
1000   & 10000  & Decaying logistic & Constant   & Logistic   & 0.157  & 0.866  & 0.142  & 0.866  \\
1000   & 10000  & Decaying logistic & Linear     & Constant   & 0.026  & 0.026  & 0.866  & 0.866  \\
1000   & 10000  & Decaying logistic & Linear     & Logistic   & 0.026  & 0.026  & 0.142  & 0.866  \\
1000   & 100000 & Decaying MCAR & Constant   & Constant   & 0.085  & 0.085  & 0.085  & 0.085  \\
1000   & 100000 & Decaying MCAR & Constant   & Logistic   & 0.017  & 0.085  & 0.016  & 0.085  \\
1000   & 100000 & Decaying MCAR & Linear     & Constant   & 0.009  & 0.009  & 0.085  & 0.085  \\
1000   & 100000 & Decaying MCAR & Linear     & Logistic   & 0.009  & 0.009  & 0.016  & 0.085  \\
1000   & 100000 & Decaying logistic & Constant   & Constant   & 1.133  & 1.133  & 1.133  & 1.133  \\
1000   & 100000 & Decaying logistic & Constant   & Logistic   & 0.158  & 1.133  & 0.137  & 1.133  \\
1000   & 100000 & Decaying logistic & Linear     & Constant   & 0.009  & 0.009  & 1.133  & 1.133  \\
1000   & 100000 & Decaying logistic & Linear     & Logistic   & 0.010  & 0.009  & 0.137  & 1.133  \\
\bottomrule
\end{tabular}
\caption{Multivariate outcome mean simulation results. RMSE is reported for four different estimators. Each row corresponds to a specific configuration of labeled and unlabeled sample sizes, with $n \in \{100, 500, 1000\}$ and $N \in \{10n, 100n\}$, setting, and models for the nuisance functions.} 
\label{tab:sim_results_multivariate_rmse}
\end{table}

\begin{table}[ht]
\centering
\begin{tabular}{rrlll|cccc}
\toprule
\textbf{n} & \textbf{N} & \textbf{Setting} & $\mu$ \textbf{model} & $\pi$ \textbf{model} & \texttt{AIPW} & \texttt{OR} & \texttt{IPW} & \texttt{Naive} \\
\midrule
100    & 1000   & Decaying MCAR & Constant   & Constant   & 0.941  & 0.037  & 0.942  & 0.938  \\
100    & 1000   & Decaying MCAR & Constant   & Logistic   & 1.000  & 0.037  & 0.999  & 0.938  \\
100    & 1000   & Decaying MCAR & Linear     & Constant   & 0.945  & 0.946  & 0.942  & 0.938  \\
100    & 1000   & Decaying MCAR & Linear     & Logistic   & 0.948  & 0.946  & 0.999  & 0.938  \\
100    & 1000   & Decaying logistic & Constant   & Constant   & 0.349  & 0.011  & 0.352  & 0.347  \\
100    & 1000   & Decaying logistic & Constant   & Logistic   & 0.982  & 0.011  & 0.974  & 0.347  \\
100    & 1000   & Decaying logistic & Linear     & Constant   & 0.940  & 0.939  & 0.352  & 0.347  \\
100    & 1000   & Decaying logistic & Linear     & Logistic   & 0.940  & 0.939  & 0.974  & 0.347  \\
100    & 10000  & Decaying MCAR & Constant   & Constant   & 0.953  & 0.012  & 0.952  & 0.948  \\
100    & 10000  & Decaying MCAR & Constant   & Logistic   & 1.000  & 0.012  & 1.000  & 0.948  \\
100    & 10000  & Decaying MCAR & Linear     & Constant   & 0.947  & 0.933  & 0.952  & 0.948  \\
100    & 10000  & Decaying MCAR & Linear     & Logistic   & 0.943  & 0.933  & 1.000  & 0.948  \\
100    & 10000  & Decaying logistic & Constant   & Constant   & 0.221  & 0.004  & 0.222  & 0.220  \\
100    & 10000  & Decaying logistic & Constant   & Logistic   & 0.965  & 0.004  & 0.961  & 0.220  \\
100    & 10000  & Decaying logistic & Linear     & Constant   & 0.940  & 0.931  & 0.222  & 0.220  \\
100    & 10000  & Decaying logistic & Linear     & Logistic   & 0.956  & 0.931  & 0.961  & 0.220  \\
500    & 5000   & Decaying MCAR & Constant   & Constant   & 0.945  & 0.015  & 0.945  & 0.945  \\
500    & 5000   & Decaying MCAR & Constant   & Logistic   & 1.000  & 0.015  & 1.000  & 0.945  \\
500    & 5000   & Decaying MCAR & Linear     & Constant   & 0.957  & 0.957  & 0.945  & 0.945  \\
500    & 5000   & Decaying MCAR & Linear     & Logistic   & 0.957  & 0.957  & 1.000  & 0.945  \\
500    & 5000   & Decaying logistic & Constant   & Constant   & 0.165  & 0.002  & 0.165  & 0.164  \\
500    & 5000   & Decaying logistic & Constant   & Logistic   & 0.976  & 0.002  & 0.970  & 0.164  \\
500    & 5000   & Decaying logistic & Linear     & Constant   & 0.958  & 0.956  & 0.165  & 0.164  \\
500    & 5000   & Decaying logistic & Linear     & Logistic   & 0.959  & 0.956  & 0.970  & 0.164  \\
500    & 50000  & Decaying MCAR & Constant   & Constant   & 0.953  & 0.005  & 0.953  & 0.952  \\
500    & 50000  & Decaying MCAR & Constant   & Logistic   & 1.000  & 0.005  & 1.000  & 0.952  \\
500    & 50000  & Decaying MCAR & Linear     & Constant   & 0.935  & 0.921  & 0.953  & 0.952  \\
500    & 50000  & Decaying MCAR & Linear     & Logistic   & 0.933  & 0.921  & 1.000  & 0.952  \\
500    & 50000  & Decaying logistic & Constant   & Constant   & 0.093  & 0.000  & 0.093  & 0.093  \\
500    & 50000  & Decaying logistic & Constant   & Logistic   & 0.966  & 0.000  & 0.959  & 0.093  \\
500    & 50000  & Decaying logistic & Linear     & Constant   & 0.932  & 0.927  & 0.093  & 0.093  \\
500    & 50000  & Decaying logistic & Linear     & Logistic   & 0.941  & 0.927  & 0.959  & 0.093  \\
1000   & 10000  & Decaying MCAR & Constant   & Constant   & 0.942  & 0.010  & 0.941  & 0.941  \\
1000   & 10000  & Decaying MCAR & Constant   & Logistic   & 1.000  & 0.010  & 1.000  & 0.941  \\
1000   & 10000  & Decaying MCAR & Linear     & Constant   & 0.952  & 0.951  & 0.941  & 0.941  \\
1000   & 10000  & Decaying MCAR & Linear     & Logistic   & 0.954  & 0.951  & 1.000  & 0.941  \\
1000   & 10000  & Decaying logistic & Constant   & Constant   & 0.110  & 0.001  & 0.110  & 0.109  \\
1000   & 10000  & Decaying logistic & Constant   & Logistic   & 0.975  & 0.001  & 0.971  & 0.109  \\
1000   & 10000  & Decaying logistic & Linear     & Constant   & 0.953  & 0.952  & 0.110  & 0.109  \\
1000   & 10000  & Decaying logistic & Linear     & Logistic   & 0.954  & 0.952  & 0.971  & 0.109  \\
1000   & 100000 & Decaying MCAR & Constant   & Constant   & 0.959  & 0.003  & 0.960  & 0.959  \\
1000   & 100000 & Decaying MCAR & Constant   & Logistic   & 1.000  & 0.003  & 1.000  & 0.959  \\
1000   & 100000 & Decaying MCAR & Linear     & Constant   & 0.949  & 0.933  & 0.960  & 0.959  \\
1000   & 100000 & Decaying MCAR & Linear     & Logistic   & 0.950  & 0.933  & 1.000  & 0.959  \\
1000   & 100000 & Decaying logistic & Constant   & Constant   & 0.076  & 0.000  & 0.076  & 0.076  \\
1000   & 100000 & Decaying logistic & Constant   & Logistic   & 0.963  & 0.000  & 0.963  & 0.076  \\
1000   & 100000 & Decaying logistic & Linear     & Constant   & 0.940  & 0.934  & 0.076  & 0.076  \\
1000   & 100000 & Decaying logistic & Linear     & Logistic   & 0.954  & 0.934  & 0.963  & 0.076  \\
\bottomrule
\end{tabular}
\caption{Multivariate outcome mean simulation results. Empirical coverage is reported for four different estimators. Each row corresponds to a specific configuration of labeled and unlabeled sample sizes, with $n \in \{100, 500, 1000\}$ and $N \in \{10n, 100n\}$, setting, and models for the nuisance functions.} 
\label{tab:sim_results_multivariate_coverage}
\end{table}

\begin{table}[ht]
\centering
\begin{tabular}{rrlll|cccc}
\toprule
\textbf{n} & \textbf{N} & \textbf{Setting} & $\mu$ \textbf{model} & $\pi$ \textbf{model} & \texttt{AIPW} & \texttt{OR} & \texttt{IPW} & \texttt{Naive} \\
\midrule
100    & 1000   & Decaying MCAR & Constant   & Constant   & 1.224  & 0.029  & 1.213  & 1.207  \\
100    & 1000   & Decaying MCAR & Constant   & Logistic   & 1.800  & 0.029  & 1.723  & 1.207  \\
100    & 1000   & Decaying MCAR & Linear     & Constant   & 0.366  & 0.364  & 1.213  & 1.207  \\
100    & 1000   & Decaying MCAR & Linear     & Logistic   & 0.369  & 0.364  & 1.723  & 1.207  \\
100    & 1000   & Decaying logistic & Constant   & Constant   & 0.957  & 0.023  & 0.993  & 0.949  \\
100    & 1000   & Decaying logistic & Constant   & Logistic   & 3.135  & 0.023  & 2.727  & 0.949  \\
100    & 1000   & Decaying logistic & Linear     & Constant   & 0.365  & 0.364  & 0.993  & 0.949  \\
100    & 1000   & Decaying logistic & Linear     & Logistic   & 0.382  & 0.364  & 2.727  & 0.949  \\
100    & 10000  & Decaying MCAR & Constant   & Constant   & 1.225  & 0.010  & 1.213  & 1.207  \\
100    & 10000  & Decaying MCAR & Constant   & Logistic   & 1.795  & 0.010  & 1.717  & 1.207  \\
100    & 10000  & Decaying MCAR & Linear     & Constant   & 0.129  & 0.121  & 1.213  & 1.207  \\
100    & 10000  & Decaying MCAR & Linear     & Logistic   & 0.137  & 0.121  & 1.717  & 1.207  \\
100    & 10000  & Decaying logistic & Constant   & Constant   & 0.782  & 0.006  & 0.837  & 0.777  \\
100    & 10000  & Decaying logistic & Constant   & Logistic   & 2.636  & 0.006  & 2.201  & 0.777  \\
100    & 10000  & Decaying logistic & Linear     & Constant   & 0.124  & 0.121  & 0.837  & 0.777  \\
100    & 10000  & Decaying logistic & Linear     & Logistic   & 0.147  & 0.121  & 2.201  & 0.777  \\
500    & 5000   & Decaying MCAR & Constant   & Constant   & 0.537  & 0.006  & 0.536  & 0.536  \\
500    & 5000   & Decaying MCAR & Constant   & Logistic   & 0.576  & 0.006  & 0.571  & 0.536  \\
500    & 5000   & Decaying MCAR & Linear     & Constant   & 0.163  & 0.162  & 0.536  & 0.536  \\
500    & 5000   & Decaying MCAR & Linear     & Logistic   & 0.163  & 0.162  & 0.571  & 0.536  \\
500    & 5000   & Decaying logistic & Constant   & Constant   & 0.426  & 0.005  & 0.443  & 0.425  \\
500    & 5000   & Decaying logistic & Constant   & Logistic   & 1.160  & 0.005  & 1.024  & 0.425  \\
500    & 5000   & Decaying logistic & Linear     & Constant   & 0.162  & 0.162  & 0.443  & 0.425  \\
500    & 5000   & Decaying logistic & Linear     & Logistic   & 0.166  & 0.162  & 1.024  & 0.425  \\
500    & 50000  & Decaying MCAR & Constant   & Constant   & 0.542  & 0.002  & 0.541  & 0.541  \\
500    & 50000  & Decaying MCAR & Constant   & Logistic   & 0.585  & 0.002  & 0.579  & 0.541  \\
500    & 50000  & Decaying MCAR & Linear     & Constant   & 0.057  & 0.054  & 0.541  & 0.541  \\
500    & 50000  & Decaying MCAR & Linear     & Logistic   & 0.057  & 0.054  & 0.579  & 0.541  \\
500    & 50000  & Decaying logistic & Constant   & Constant   & 0.348  & 0.001  & 0.373  & 0.348  \\
500    & 50000  & Decaying logistic & Constant   & Logistic   & 1.073  & 0.001  & 0.912  & 0.348  \\
500    & 50000  & Decaying logistic & Linear     & Constant   & 0.055  & 0.054  & 0.373  & 0.348  \\
500    & 50000  & Decaying logistic & Linear     & Logistic   & 0.062  & 0.054  & 0.912  & 0.348  \\
1000   & 10000  & Decaying MCAR & Constant   & Constant   & 0.379  & 0.003  & 0.379  & 0.379  \\
1000   & 10000  & Decaying MCAR & Constant   & Logistic   & 0.393  & 0.003  & 0.391  & 0.379  \\
1000   & 10000  & Decaying MCAR & Linear     & Constant   & 0.115  & 0.114  & 0.379  & 0.379  \\
1000   & 10000  & Decaying MCAR & Linear     & Logistic   & 0.115  & 0.114  & 0.391  & 0.379  \\
1000   & 10000  & Decaying logistic & Constant   & Constant   & 0.300  & 0.002  & 0.313  & 0.300  \\
1000   & 10000  & Decaying logistic & Constant   & Logistic   & 0.772  & 0.002  & 0.684  & 0.300  \\
1000   & 10000  & Decaying logistic & Linear     & Constant   & 0.115  & 0.114  & 0.313  & 0.300  \\
1000   & 10000  & Decaying logistic & Linear     & Logistic   & 0.116  & 0.114  & 0.684  & 0.300  \\
1000   & 100000 & Decaying MCAR & Constant   & Constant   & 0.383  & 0.001  & 0.382  & 0.382  \\
1000   & 100000 & Decaying MCAR & Constant   & Logistic   & 0.398  & 0.001  & 0.396  & 0.382  \\
1000   & 100000 & Decaying MCAR & Linear     & Constant   & 0.040  & 0.038  & 0.382  & 0.382  \\
1000   & 100000 & Decaying MCAR & Linear     & Logistic   & 0.040  & 0.038  & 0.396  & 0.382  \\
1000   & 100000 & Decaying logistic & Constant   & Constant   & 0.246  & 0.001  & 0.264  & 0.246  \\
1000   & 100000 & Decaying logistic & Constant   & Logistic   & 0.756  & 0.001  & 0.643  & 0.246  \\
1000   & 100000 & Decaying logistic & Linear     & Constant   & 0.039  & 0.038  & 0.264  & 0.246  \\
1000   & 100000 & Decaying logistic & Linear     & Logistic   & 0.043  & 0.038  & 0.643  & 0.246  \\
\bottomrule
\end{tabular}
\caption{Multivariate outcome mean simulation results. Average width of component-wise confidence intervals is reported for four different estimators. Each row corresponds to a specific configuration of labeled and unlabeled sample sizes, with $n \in \{100, 500, 1000\}$ and $N \in \{10n, 100n\}$, setting, and models for the nuisance functions.} 
\label{tab:sim_results_multivariate_width}
\end{table}

\begin{table}[ht]
\centering
\begin{tabular}{rrlll|cccc}
\toprule
\textbf{n} & \textbf{N} & \textbf{Setting} & $\mu$ \textbf{model} & $\pi$ \textbf{model} & \texttt{AIPW} & \texttt{OR} & \texttt{IPW} & \texttt{Naive} \\
\midrule
100    & 1000   & Decaying MCAR & Constant   & Constant   & 0.932  & 0.495  & 0.940  & 0.934  \\
100    & 1000   & Decaying MCAR & Constant   & Logistic   & 1.000  & 0.495  & 1.000  & 0.934  \\
100    & 1000   & Decaying MCAR & Linear     & Constant   & 0.956  & 0.953  & 0.940  & 0.934  \\
100    & 1000   & Decaying MCAR & Linear     & Logistic   & 0.950  & 0.953  & 1.000  & 0.934  \\
100    & 1000   & Decaying logistic & Constant   & Constant   & 0.139  & 0.486  & 0.141  & 0.211  \\
100    & 1000   & Decaying logistic & Constant   & Logistic   & 0.970  & 0.486  & 0.963  & 0.211  \\
100    & 1000   & Decaying logistic & Linear     & Constant   & 0.947  & 0.950  & 0.141  & 0.211  \\
100    & 1000   & Decaying logistic & Linear     & Logistic   & 0.945  & 0.950  & 0.963  & 0.211  \\
100    & 10000  & Decaying MCAR & Constant   & Constant   & 0.945  & 0.486  & 0.949  & 0.948  \\
100    & 10000  & Decaying MCAR & Constant   & Logistic   & 1.000  & 0.486  & 1.000  & 0.948  \\
100    & 10000  & Decaying MCAR & Linear     & Constant   & 0.956  & 0.924  & 0.949  & 0.948  \\
100    & 10000  & Decaying MCAR & Linear     & Logistic   & 0.942  & 0.924  & 1.000  & 0.948  \\
100    & 10000  & Decaying logistic & Constant   & Constant   & 0.065  & 0.478  & 0.066  & 0.134  \\
100    & 10000  & Decaying logistic & Constant   & Logistic   & 0.951  & 0.478  & 0.947  & 0.134  \\
100    & 10000  & Decaying logistic & Linear     & Constant   & 0.940  & 0.928  & 0.066  & 0.134  \\
100    & 10000  & Decaying logistic & Linear     & Logistic   & 0.950  & 0.928  & 0.947  & 0.134  \\
500    & 5000   & Decaying MCAR & Constant   & Constant   & 0.940  & 0.459  & 0.940  & 0.941  \\
500    & 5000   & Decaying MCAR & Constant   & Logistic   & 1.000  & 0.459  & 1.000  & 0.941  \\
500    & 5000   & Decaying MCAR & Linear     & Constant   & 0.957  & 0.955  & 0.940  & 0.941  \\
500    & 5000   & Decaying MCAR & Linear     & Logistic   & 0.958  & 0.955  & 1.000  & 0.941  \\
500    & 5000   & Decaying logistic & Constant   & Constant   & 0.030  & 0.515  & 0.030  & 0.047  \\
500    & 5000   & Decaying logistic & Constant   & Logistic   & 0.977  & 0.515  & 0.970  & 0.047  \\
500    & 5000   & Decaying logistic & Linear     & Constant   & 0.958  & 0.956  & 0.030  & 0.047  \\
500    & 5000   & Decaying logistic & Linear     & Logistic   & 0.961  & 0.956  & 0.970  & 0.047  \\
500    & 50000  & Decaying MCAR & Constant   & Constant   & 0.955  & 0.471  & 0.956  & 0.948  \\
500    & 50000  & Decaying MCAR & Constant   & Logistic   & 1.000  & 0.471  & 1.000  & 0.948  \\
500    & 50000  & Decaying MCAR & Linear     & Constant   & 0.933  & 0.895  & 0.956  & 0.948  \\
500    & 50000  & Decaying MCAR & Linear     & Logistic   & 0.937  & 0.895  & 1.000  & 0.948  \\
500    & 50000  & Decaying logistic & Constant   & Constant   & 0.008  & 0.511  & 0.008  & 0.025  \\
500    & 50000  & Decaying logistic & Constant   & Logistic   & 0.956  & 0.511  & 0.950  & 0.025  \\
500    & 50000  & Decaying logistic & Linear     & Constant   & 0.923  & 0.912  & 0.008  & 0.025  \\
500    & 50000  & Decaying logistic & Linear     & Logistic   & 0.938  & 0.912  & 0.950  & 0.025  \\
1000   & 10000  & Decaying MCAR & Constant   & Constant   & 0.943  & 0.505  & 0.945  & 0.945  \\
1000   & 10000  & Decaying MCAR & Constant   & Logistic   & 1.000  & 0.505  & 1.000  & 0.945  \\
1000   & 10000  & Decaying MCAR & Linear     & Constant   & 0.959  & 0.959  & 0.945  & 0.945  \\
1000   & 10000  & Decaying MCAR & Linear     & Logistic   & 0.961  & 0.959  & 1.000  & 0.945  \\
1000   & 10000  & Decaying logistic & Constant   & Constant   & 0.013  & 0.498  & 0.013  & 0.021  \\
1000   & 10000  & Decaying logistic & Constant   & Logistic   & 0.974  & 0.498  & 0.970  & 0.021  \\
1000   & 10000  & Decaying logistic & Linear     & Constant   & 0.959  & 0.958  & 0.013  & 0.021  \\
1000   & 10000  & Decaying logistic & Linear     & Logistic   & 0.959  & 0.958  & 0.970  & 0.021  \\
1000   & 100000 & Decaying MCAR & Constant   & Constant   & 0.965  & 0.503  & 0.965  & 0.960  \\
1000   & 100000 & Decaying MCAR & Constant   & Logistic   & 1.000  & 0.503  & 1.000  & 0.960  \\
1000   & 100000 & Decaying MCAR & Linear     & Constant   & 0.956  & 0.924  & 0.965  & 0.960  \\
1000   & 100000 & Decaying MCAR & Linear     & Logistic   & 0.958  & 0.924  & 1.000  & 0.960  \\
1000   & 100000 & Decaying logistic & Constant   & Constant   & 0.007  & 0.490  & 0.007  & 0.014  \\
1000   & 100000 & Decaying logistic & Constant   & Logistic   & 0.957  & 0.490  & 0.963  & 0.014  \\
1000   & 100000 & Decaying logistic & Linear     & Constant   & 0.934  & 0.921  & 0.007  & 0.014  \\
1000   & 100000 & Decaying logistic & Linear     & Logistic   & 0.952  & 0.921  & 0.963  & 0.014  \\
\bottomrule
\end{tabular}
\caption{Multivariate outcome mean simulation results. Joint empirical coverage is reported for four different estimators. Each row corresponds to a specific configuration of labeled and unlabeled sample sizes, with $n \in \{100, 500, 1000\}$ and $N \in \{10n, 100n\}$, setting, and models for the nuisance functions.}
\label{tab:sim_results_multivariate_joint_coverage}
\end{table}

\begin{table}[ht]
\centering
\begin{tabular}{rrlll|cccc}
\toprule
\textbf{n} & \textbf{N} & \textbf{Setting} & $\mu$ \textbf{model} & $\pi$ \textbf{model} & \texttt{AIPW} & \texttt{OR} & \texttt{IPW} & \texttt{Naive} \\
\midrule
100    & 1000   & Decaying MCAR & Constant   & Constant   & 0.300  & 0.974  & 0.297  & 0.010  \\
100    & 1000   & Decaying MCAR & Constant   & Logistic   & 0.549  & 0.974  & 0.509  & 0.010  \\
100    & 1000   & Decaying MCAR & Linear     & Constant   & 0.011  & 0.012  & 0.297  & 0.010  \\
100    & 1000   & Decaying MCAR & Linear     & Logistic   & 0.015  & 0.012  & 0.509  & 0.010  \\
100    & 1000   & Decaying logistic & Constant   & Constant   & 0.244  & 0.974  & 0.315  & 0.008  \\
100    & 1000   & Decaying logistic & Constant   & Logistic   & 0.975  & 0.974  & 0.850  & 0.008  \\
100    & 1000   & Decaying logistic & Linear     & Constant   & 0.009  & 0.009  & 0.315  & 0.008  \\
100    & 1000   & Decaying logistic & Linear     & Logistic   & 0.026  & 0.009  & 0.850  & 0.008  \\
100    & 10000  & Decaying MCAR & Constant   & Constant   & 0.313  & 0.976  & 0.310  & 0.010  \\
100    & 10000  & Decaying MCAR & Constant   & Logistic   & 0.554  & 0.976  & 0.515  & 0.010  \\
100    & 10000  & Decaying MCAR & Linear     & Constant   & 0.011  & 0.011  & 0.310  & 0.010  \\
100    & 10000  & Decaying MCAR & Linear     & Logistic   & 0.015  & 0.011  & 0.515  & 0.010  \\
100    & 10000  & Decaying logistic & Constant   & Constant   & 0.204  & 0.976  & 0.462  & 0.006  \\
100    & 10000  & Decaying logistic & Constant   & Logistic   & 0.720  & 0.976  & 0.611  & 0.006  \\
100    & 10000  & Decaying logistic & Linear     & Constant   & 0.007  & 0.007  & 0.462  & 0.006  \\
100    & 10000  & Decaying logistic & Linear     & Logistic   & 0.019  & 0.007  & 0.611  & 0.006  \\
500    & 5000   & Decaying MCAR & Constant   & Constant   & 0.134  & 0.969  & 0.134  & 0.004  \\
500    & 5000   & Decaying MCAR & Constant   & Logistic   & 0.154  & 0.969  & 0.150  & 0.004  \\
500    & 5000   & Decaying MCAR & Linear     & Constant   & 0.004  & 0.005  & 0.134  & 0.004  \\
500    & 5000   & Decaying MCAR & Linear     & Logistic   & 0.005  & 0.005  & 0.150  & 0.004  \\
500    & 5000   & Decaying logistic & Constant   & Constant   & 0.127  & 0.969  & 0.219  & 0.003  \\
500    & 5000   & Decaying logistic & Constant   & Logistic   & 0.305  & 0.969  & 0.274  & 0.003  \\
500    & 5000   & Decaying logistic & Linear     & Constant   & 0.004  & 0.004  & 0.219  & 0.003  \\
500    & 5000   & Decaying logistic & Linear     & Logistic   & 0.008  & 0.004  & 0.274  & 0.003  \\
500    & 50000  & Decaying MCAR & Constant   & Constant   & 0.139  & 0.972  & 0.139  & 0.004  \\
500    & 50000  & Decaying MCAR & Constant   & Logistic   & 0.160  & 0.972  & 0.156  & 0.004  \\
500    & 50000  & Decaying MCAR & Linear     & Constant   & 0.004  & 0.004  & 0.139  & 0.004  \\
500    & 50000  & Decaying MCAR & Linear     & Logistic   & 0.005  & 0.004  & 0.156  & 0.004  \\
500    & 50000  & Decaying logistic & Constant   & Constant   & 0.096  & 0.972  & 0.410  & 0.003  \\
500    & 50000  & Decaying logistic & Constant   & Logistic   & 0.300  & 0.972  & 0.257  & 0.003  \\
500    & 50000  & Decaying logistic & Linear     & Constant   & 0.003  & 0.003  & 0.410  & 0.003  \\
500    & 50000  & Decaying logistic & Linear     & Logistic   & 0.008  & 0.003  & 0.257  & 0.003  \\
1000   & 10000  & Decaying MCAR & Constant   & Constant   & 0.094  & 0.963  & 0.094  & 0.003  \\
1000   & 10000  & Decaying MCAR & Constant   & Logistic   & 0.100  & 0.963  & 0.099  & 0.003  \\
1000   & 10000  & Decaying MCAR & Linear     & Constant   & 0.003  & 0.003  & 0.094  & 0.003  \\
1000   & 10000  & Decaying MCAR & Linear     & Logistic   & 0.003  & 0.003  & 0.099  & 0.003  \\
1000   & 10000  & Decaying logistic & Constant   & Constant   & 0.100  & 0.963  & 0.198  & 0.002  \\
1000   & 10000  & Decaying logistic & Constant   & Logistic   & 0.206  & 0.963  & 0.184  & 0.002  \\
1000   & 10000  & Decaying logistic & Linear     & Constant   & 0.003  & 0.003  & 0.198  & 0.002  \\
1000   & 10000  & Decaying logistic & Linear     & Logistic   & 0.006  & 0.003  & 0.184  & 0.002  \\
1000   & 100000 & Decaying MCAR & Constant   & Constant   & 0.098  & 0.969  & 0.098  & 0.003  \\
1000   & 100000 & Decaying MCAR & Constant   & Logistic   & 0.105  & 0.969  & 0.104  & 0.003  \\
1000   & 100000 & Decaying MCAR & Linear     & Constant   & 0.003  & 0.003  & 0.098  & 0.003  \\
1000   & 100000 & Decaying MCAR & Linear     & Logistic   & 0.003  & 0.003  & 0.104  & 0.003  \\
1000   & 100000 & Decaying logistic & Constant   & Constant   & 0.072  & 0.969  & 0.379  & 0.002  \\
1000   & 100000 & Decaying logistic & Constant   & Logistic   & 0.222  & 0.969  & 0.192  & 0.002  \\
1000   & 100000 & Decaying logistic & Linear     & Constant   & 0.002  & 0.002  & 0.379  & 0.002  \\
1000   & 100000 & Decaying logistic & Linear     & Logistic   & 0.006  & 0.002  & 0.192  & 0.002  \\
\bottomrule
\end{tabular}
\caption{Linear regression coefficients simulation results. RMSE is reported for four different estimators. Each row corresponds to a specific configuration of labeled and unlabeled sample sizes, with $n \in \{100, 500, 1000\}$ and $N \in \{10n, 100n\}$, setting, and models for the nuisance functions.} 
\label{tab:sim_results_lm_rmse}
\end{table} 
    
\begin{table}[ht]
\centering
\begin{tabular}{rrlll|cccc}
\toprule
\textbf{n} & \textbf{N} & \textbf{Setting} & $\mu$ \textbf{model} & $\pi$ \textbf{model} & \texttt{AIPW} & \texttt{OR} & \texttt{IPW} & \texttt{Naive} \\
\midrule
100    & 1000   & Decaying MCAR & Constant   & Constant   & 0.954  & 0.017  & 0.954  & 0.920  \\
100    & 1000   & Decaying MCAR & Constant   & Logistic   & 0.942  & 0.017  & 0.950  & 0.920  \\
100    & 1000   & Decaying MCAR & Linear     & Constant   & 0.934  & 0.162  & 0.954  & 0.920  \\
100    & 1000   & Decaying MCAR & Linear     & Logistic   & 0.959  & 0.162  & 0.950  & 0.920  \\
100    & 1000   & Decaying logistic & Constant   & Constant   & 0.952  & 0.039  & 0.899  & 0.932  \\
100    & 1000   & Decaying logistic & Constant   & Logistic   & 0.950  & 0.039  & 0.956  & 0.932  \\
100    & 1000   & Decaying logistic & Linear     & Constant   & 0.942  & 0.172  & 0.899  & 0.932  \\
100    & 1000   & Decaying logistic & Linear     & Logistic   & 0.948  & 0.172  & 0.956  & 0.932  \\
100    & 10000  & Decaying MCAR & Constant   & Constant   & 0.951  & 0.006  & 0.952  & 0.928  \\
100    & 10000  & Decaying MCAR & Constant   & Logistic   & 0.947  & 0.006  & 0.954  & 0.928  \\
100    & 10000  & Decaying MCAR & Linear     & Constant   & 0.941  & 0.057  & 0.952  & 0.928  \\
100    & 10000  & Decaying MCAR & Linear     & Logistic   & 0.961  & 0.057  & 0.954  & 0.928  \\
100    & 10000  & Decaying logistic & Constant   & Constant   & 0.961  & 0.017  & 0.835  & 0.944  \\
100    & 10000  & Decaying logistic & Constant   & Logistic   & 0.939  & 0.017  & 0.945  & 0.944  \\
100    & 10000  & Decaying logistic & Linear     & Constant   & 0.955  & 0.062  & 0.835  & 0.944  \\
100    & 10000  & Decaying logistic & Linear     & Logistic   & 0.950  & 0.062  & 0.945  & 0.944  \\
500    & 5000   & Decaying MCAR & Constant   & Constant   & 0.956  & 0.004  & 0.956  & 0.946  \\
500    & 5000   & Decaying MCAR & Constant   & Logistic   & 0.950  & 0.004  & 0.953  & 0.946  \\
500    & 5000   & Decaying MCAR & Linear     & Constant   & 0.950  & 0.079  & 0.956  & 0.946  \\
500    & 5000   & Decaying MCAR & Linear     & Logistic   & 0.956  & 0.079  & 0.953  & 0.946  \\
500    & 5000   & Decaying logistic & Constant   & Constant   & 0.917  & 0.017  & 0.833  & 0.944  \\
500    & 5000   & Decaying logistic & Constant   & Logistic   & 0.951  & 0.017  & 0.950  & 0.944  \\
500    & 5000   & Decaying logistic & Linear     & Constant   & 0.948  & 0.089  & 0.833  & 0.944  \\
500    & 5000   & Decaying logistic & Linear     & Logistic   & 0.949  & 0.089  & 0.950  & 0.944  \\
500    & 50000  & Decaying MCAR & Constant   & Constant   & 0.955  & 0.002  & 0.955  & 0.946  \\
500    & 50000  & Decaying MCAR & Constant   & Logistic   & 0.951  & 0.002  & 0.953  & 0.946  \\
500    & 50000  & Decaying MCAR & Linear     & Constant   & 0.948  & 0.024  & 0.955  & 0.946  \\
500    & 50000  & Decaying MCAR & Linear     & Logistic   & 0.955  & 0.024  & 0.953  & 0.946  \\
500    & 50000  & Decaying logistic & Constant   & Constant   & 0.956  & 0.007  & 0.798  & 0.951  \\
500    & 50000  & Decaying logistic & Constant   & Logistic   & 0.943  & 0.007  & 0.944  & 0.951  \\
500    & 50000  & Decaying logistic & Linear     & Constant   & 0.959  & 0.028  & 0.798  & 0.951  \\
500    & 50000  & Decaying logistic & Linear     & Logistic   & 0.950  & 0.028  & 0.944  & 0.951  \\
1000   & 10000  & Decaying MCAR & Constant   & Constant   & 0.957  & 0.002  & 0.957  & 0.946  \\
1000   & 10000  & Decaying MCAR & Constant   & Logistic   & 0.955  & 0.002  & 0.956  & 0.946  \\
1000   & 10000  & Decaying MCAR & Linear     & Constant   & 0.947  & 0.054  & 0.957  & 0.946  \\
1000   & 10000  & Decaying MCAR & Linear     & Logistic   & 0.952  & 0.054  & 0.956  & 0.946  \\
1000   & 10000  & Decaying logistic & Constant   & Constant   & 0.889  & 0.011  & 0.817  & 0.947  \\
1000   & 10000  & Decaying logistic & Constant   & Logistic   & 0.947  & 0.011  & 0.946  & 0.947  \\
1000   & 10000  & Decaying logistic & Linear     & Constant   & 0.950  & 0.056  & 0.817  & 0.947  \\
1000   & 10000  & Decaying logistic & Linear     & Logistic   & 0.953  & 0.056  & 0.946  & 0.947  \\
1000   & 100000 & Decaying MCAR & Constant   & Constant   & 0.957  & 0.001  & 0.958  & 0.953  \\
1000   & 100000 & Decaying MCAR & Constant   & Logistic   & 0.956  & 0.001  & 0.957  & 0.953  \\
1000   & 100000 & Decaying MCAR & Linear     & Constant   & 0.952  & 0.018  & 0.958  & 0.953  \\
1000   & 100000 & Decaying MCAR & Linear     & Logistic   & 0.954  & 0.018  & 0.957  & 0.953  \\
1000   & 100000 & Decaying logistic & Constant   & Constant   & 0.944  & 0.004  & 0.790  & 0.950  \\
1000   & 100000 & Decaying logistic & Constant   & Logistic   & 0.946  & 0.004  & 0.947  & 0.950  \\
1000   & 100000 & Decaying logistic & Linear     & Constant   & 0.958  & 0.021  & 0.790  & 0.950  \\
1000   & 100000 & Decaying logistic & Linear     & Logistic   & 0.955  & 0.021  & 0.947  & 0.950  \\
\bottomrule
\end{tabular}
\caption{Linear regression coefficients simulation results. Component-wise empirical coverage is reported for four different estimators. Each row corresponds to a specific configuration of labeled and unlabeled sample sizes, with $n \in \{100, 500, 1000\}$ and $N \in \{10n, 100n\}$, setting, and models for the nuisance functions.}
\label{tab:sim_results_lm_coverage}
\end{table}

\begin{table}[ht]
\centering
\begin{tabular}{rrlll|cccc}
\toprule
\textbf{n} & \textbf{N} & \textbf{Setting} & $\mu$ \textbf{model} & $\pi$ \textbf{model} & \texttt{AIPW} & \texttt{OR} & \texttt{IPW} & \texttt{Naive} \\
\midrule
100    & 1000   & Decaying MCAR & Constant   & Constant   & 1.247  & 0.046  & 1.237  & 0.039  \\
100    & 1000   & Decaying MCAR & Constant   & Logistic   & 1.879  & 0.046  & 1.803  & 0.039  \\
100    & 1000   & Decaying MCAR & Linear     & Constant   & 0.045  & 0.005  & 1.237  & 0.039  \\
100    & 1000   & Decaying MCAR & Linear     & Logistic   & 0.063  & 0.005  & 1.803  & 0.039  \\
100    & 1000   & Decaying logistic & Constant   & Constant   & 0.991  & 0.101  & 1.014  & 0.030  \\
100    & 1000   & Decaying logistic & Constant   & Logistic   & 3.450  & 0.101  & 3.016  & 0.030  \\
100    & 1000   & Decaying logistic & Linear     & Constant   & 0.035  & 0.004  & 1.014  & 0.030  \\
100    & 1000   & Decaying logistic & Linear     & Logistic   & 0.095  & 0.004  & 3.016  & 0.030  \\
100    & 10000  & Decaying MCAR & Constant   & Constant   & 1.305  & 0.015  & 1.296  & 0.039  \\
100    & 10000  & Decaying MCAR & Constant   & Logistic   & 1.950  & 0.015  & 1.872  & 0.039  \\
100    & 10000  & Decaying MCAR & Linear     & Constant   & 0.045  & 0.002  & 1.296  & 0.039  \\
100    & 10000  & Decaying MCAR & Linear     & Logistic   & 0.065  & 0.002  & 1.872  & 0.039  \\
100    & 10000  & Decaying logistic & Constant   & Constant   & 0.884  & 0.042  & 0.946  & 0.024  \\
100    & 10000  & Decaying logistic & Constant   & Logistic   & 2.653  & 0.042  & 2.275  & 0.024  \\
100    & 10000  & Decaying logistic & Linear     & Constant   & 0.028  & 0.001  & 0.946  & 0.024  \\
100    & 10000  & Decaying logistic & Linear     & Logistic   & 0.069  & 0.001  & 2.275  & 0.024  \\
500    & 5000   & Decaying MCAR & Constant   & Constant   & 0.558  & 0.009  & 0.558  & 0.017  \\
500    & 5000   & Decaying MCAR & Constant   & Logistic   & 0.604  & 0.009  & 0.599  & 0.017  \\
500    & 5000   & Decaying MCAR & Linear     & Constant   & 0.018  & 0.001  & 0.558  & 0.017  \\
500    & 5000   & Decaying MCAR & Linear     & Logistic   & 0.019  & 0.001  & 0.599  & 0.017  \\
500    & 5000   & Decaying logistic & Constant   & Constant   & 0.444  & 0.042  & 0.457  & 0.014  \\
500    & 5000   & Decaying logistic & Constant   & Logistic   & 1.168  & 0.042  & 1.043  & 0.014  \\
500    & 5000   & Decaying logistic & Linear     & Constant   & 0.015  & 0.001  & 0.457  & 0.014  \\
500    & 5000   & Decaying logistic & Linear     & Logistic   & 0.032  & 0.001  & 1.043  & 0.014  \\
500    & 50000  & Decaying MCAR & Constant   & Constant   & 0.584  & 0.003  & 0.583  & 0.018  \\
500    & 50000  & Decaying MCAR & Constant   & Logistic   & 0.634  & 0.003  & 0.628  & 0.018  \\
500    & 50000  & Decaying MCAR & Linear     & Constant   & 0.018  & 0.000  & 0.583  & 0.018  \\
500    & 50000  & Decaying MCAR & Linear     & Logistic   & 0.019  & 0.000  & 0.628  & 0.018  \\
500    & 50000  & Decaying logistic & Constant   & Constant   & 0.394  & 0.018  & 0.424  & 0.011  \\
500    & 50000  & Decaying logistic & Constant   & Logistic   & 1.156  & 0.018  & 0.989  & 0.011  \\
500    & 50000  & Decaying logistic & Linear     & Constant   & 0.012  & 0.000  & 0.424  & 0.011  \\
500    & 50000  & Decaying logistic & Linear     & Logistic   & 0.029  & 0.000  & 0.989  & 0.011  \\
1000   & 10000  & Decaying MCAR & Constant   & Constant   & 0.393  & 0.005  & 0.393  & 0.012  \\
1000   & 10000  & Decaying MCAR & Constant   & Logistic   & 0.409  & 0.005  & 0.407  & 0.012  \\
1000   & 10000  & Decaying MCAR & Linear     & Constant   & 0.013  & 0.000  & 0.393  & 0.012  \\
1000   & 10000  & Decaying MCAR & Linear     & Logistic   & 0.013  & 0.000  & 0.407  & 0.012  \\
1000   & 10000  & Decaying logistic & Constant   & Constant   & 0.311  & 0.029  & 0.321  & 0.010  \\
1000   & 10000  & Decaying logistic & Constant   & Logistic   & 0.800  & 0.029  & 0.712  & 0.010  \\
1000   & 10000  & Decaying logistic & Linear     & Constant   & 0.010  & 0.000  & 0.321  & 0.010  \\
1000   & 10000  & Decaying logistic & Linear     & Logistic   & 0.021  & 0.000  & 0.712  & 0.010  \\
1000   & 100000 & Decaying MCAR & Constant   & Constant   & 0.412  & 0.001  & 0.411  & 0.012  \\
1000   & 100000 & Decaying MCAR & Constant   & Logistic   & 0.429  & 0.001  & 0.427  & 0.012  \\
1000   & 100000 & Decaying MCAR & Linear     & Constant   & 0.013  & 0.000  & 0.411  & 0.012  \\
1000   & 100000 & Decaying MCAR & Linear     & Logistic   & 0.013  & 0.000  & 0.427  & 0.012  \\
1000   & 100000 & Decaying logistic & Constant   & Constant   & 0.278  & 0.012  & 0.297  & 0.008  \\
1000   & 100000 & Decaying logistic & Constant   & Logistic   & 0.864  & 0.012  & 0.745  & 0.008  \\
1000   & 100000 & Decaying logistic & Linear     & Constant   & 0.009  & 0.000  & 0.297  & 0.008  \\
1000   & 100000 & Decaying logistic & Linear     & Logistic   & 0.021  & 0.000  & 0.745  & 0.008  \\
\bottomrule
\end{tabular}
\caption{Linear regression coefficients simulation results. Average width of component-wise confidence intervals is reported for four different estimators. Each row corresponds to a specific configuration of labeled and unlabeled sample sizes, with $n \in \{100, 500, 1000\}$ and $N \in \{10n, 100n\}$, setting, and models for the nuisance functions.}
\label{tab:sim_results_lm_width}
\end{table}

\begin{table}[ht]
\centering
\begin{tabular}{rrlll|cccc}
\toprule
\textbf{n} & \textbf{N} & \textbf{Setting} & $\mu$ \textbf{model} & $\pi$ \textbf{model} & \texttt{AIPW} & \texttt{OR} & \texttt{IPW} & \texttt{Naive} \\
\midrule
100    & 1000   & Decaying MCAR & Constant   & Constant   & 0.828  & 0.000  & 0.827  & 0.853  \\
100    & 1000   & Decaying MCAR & Constant   & Logistic   & 0.910  & 0.000  & 0.910  & 0.853  \\
100    & 1000   & Decaying MCAR & Linear     & Constant   & 0.736  & 0.000  & 0.827  & 0.853  \\
100    & 1000   & Decaying MCAR & Linear     & Logistic   & 0.835  & 0.000  & 0.910  & 0.853  \\
100    & 1000   & Decaying logistic & Constant   & Constant   & 0.864  & 0.000  & 0.680  & 0.993  \\
100    & 1000   & Decaying logistic & Constant   & Logistic   & 0.521  & 0.000  & 0.600  & 0.993  \\
100    & 1000   & Decaying logistic & Linear     & Constant   & 0.817  & 0.000  & 0.680  & 0.993  \\
100    & 1000   & Decaying logistic & Linear     & Logistic   & 0.722  & 0.000  & 0.600  & 0.993  \\
100    & 10000  & Decaying MCAR & Constant   & Constant   & 0.762  & 0.000  & 0.758  & 0.849  \\
100    & 10000  & Decaying MCAR & Constant   & Logistic   & 0.864  & 0.000  & 0.876  & 0.849  \\
100    & 10000  & Decaying MCAR & Linear     & Constant   & 0.743  & 0.000  & 0.758  & 0.849  \\
100    & 10000  & Decaying MCAR & Linear     & Logistic   & 0.847  & 0.000  & 0.876  & 0.849  \\
100    & 10000  & Decaying logistic & Constant   & Constant   & 0.902  & 0.000  & 0.404  & 1.000  \\
100    & 10000  & Decaying logistic & Constant   & Logistic   & 0.350  & 0.000  & 0.450  & 1.000  \\
100    & 10000  & Decaying logistic & Linear     & Constant   & 0.880  & 0.000  & 0.404  & 1.000  \\
100    & 10000  & Decaying logistic & Linear     & Logistic   & 0.717  & 0.000  & 0.450  & 1.000  \\
500    & 5000   & Decaying MCAR & Constant   & Constant   & 0.934  & 0.000  & 0.934  & 0.936  \\
500    & 5000   & Decaying MCAR & Constant   & Logistic   & 0.942  & 0.000  & 0.946  & 0.936  \\
500    & 5000   & Decaying MCAR & Linear     & Constant   & 0.931  & 0.000  & 0.934  & 0.936  \\
500    & 5000   & Decaying MCAR & Linear     & Logistic   & 0.941  & 0.000  & 0.946  & 0.936  \\
500    & 5000   & Decaying logistic & Constant   & Constant   & 0.797  & 0.000  & 0.372  & 0.997  \\
500    & 5000   & Decaying logistic & Constant   & Logistic   & 0.612  & 0.000  & 0.664  & 0.997  \\
500    & 5000   & Decaying logistic & Linear     & Constant   & 0.922  & 0.000  & 0.372  & 0.997  \\
500    & 5000   & Decaying logistic & Linear     & Logistic   & 0.866  & 0.000  & 0.664  & 0.997  \\
500    & 50000  & Decaying MCAR & Constant   & Constant   & 0.907  & 0.000  & 0.901  & 0.926  \\
500    & 50000  & Decaying MCAR & Constant   & Logistic   & 0.939  & 0.000  & 0.939  & 0.926  \\
500    & 50000  & Decaying MCAR & Linear     & Constant   & 0.912  & 0.000  & 0.901  & 0.926  \\
500    & 50000  & Decaying MCAR & Linear     & Logistic   & 0.926  & 0.000  & 0.939  & 0.926  \\
500    & 50000  & Decaying logistic & Constant   & Constant   & 0.938  & 0.000  & 0.194  & 1.000  \\
500    & 50000  & Decaying logistic & Constant   & Logistic   & 0.580  & 0.000  & 0.620  & 1.000  \\
500    & 50000  & Decaying logistic & Linear     & Constant   & 0.949  & 0.000  & 0.194  & 1.000  \\
500    & 50000  & Decaying logistic & Linear     & Logistic   & 0.894  & 0.000  & 0.620  & 1.000  \\
1000   & 10000  & Decaying MCAR & Constant   & Constant   & 0.941  & 0.000  & 0.938  & 0.922  \\
1000   & 10000  & Decaying MCAR & Constant   & Logistic   & 0.946  & 0.000  & 0.948  & 0.922  \\
1000   & 10000  & Decaying MCAR & Linear     & Constant   & 0.920  & 0.000  & 0.938  & 0.922  \\
1000   & 10000  & Decaying MCAR & Linear     & Logistic   & 0.934  & 0.000  & 0.948  & 0.922  \\
1000   & 10000  & Decaying logistic & Constant   & Constant   & 0.670  & 0.000  & 0.294  & 0.997  \\
1000   & 10000  & Decaying logistic & Constant   & Logistic   & 0.703  & 0.000  & 0.735  & 0.997  \\
1000   & 10000  & Decaying logistic & Linear     & Constant   & 0.939  & 0.000  & 0.294  & 0.997  \\
1000   & 10000  & Decaying logistic & Linear     & Logistic   & 0.931  & 0.000  & 0.735  & 0.997  \\
1000   & 100000 & Decaying MCAR & Constant   & Constant   & 0.943  & 0.000  & 0.943  & 0.949  \\
1000   & 100000 & Decaying MCAR & Constant   & Logistic   & 0.950  & 0.000  & 0.953  & 0.949  \\
1000   & 100000 & Decaying MCAR & Linear     & Constant   & 0.932  & 0.000  & 0.943  & 0.949  \\
1000   & 100000 & Decaying MCAR & Linear     & Logistic   & 0.948  & 0.000  & 0.953  & 0.949  \\
1000   & 100000 & Decaying logistic & Constant   & Constant   & 0.918  & 0.000  & 0.139  & 1.000  \\
1000   & 100000 & Decaying logistic & Constant   & Logistic   & 0.675  & 0.000  & 0.694  & 1.000  \\
1000   & 100000 & Decaying logistic & Linear     & Constant   & 0.946  & 0.000  & 0.139  & 1.000  \\
1000   & 100000 & Decaying logistic & Linear     & Logistic   & 0.948  & 0.000  & 0.694  & 1.000  \\
\bottomrule
\end{tabular}
\caption{Linear regression coefficients simulation results. Joint empirical coverage is reported for four different estimators. Each row corresponds to a specific configuration of labeled and unlabeled sample sizes, with $n \in \{100, 500, 1000\}$ and $N \in \{10n, 100n\}$, setting, and models for the nuisance functions. The significant undercoverage in small samples is due to the fact that the parameter we are targeting here is 10-dimensional.}
\label{tab:sim_results_lm_joint_coverage}
\end{table}

\clearpage

\section{Further application details}

\begin{figure}[ht!]
    \centering
    \includegraphics[width=0.7\linewidth]{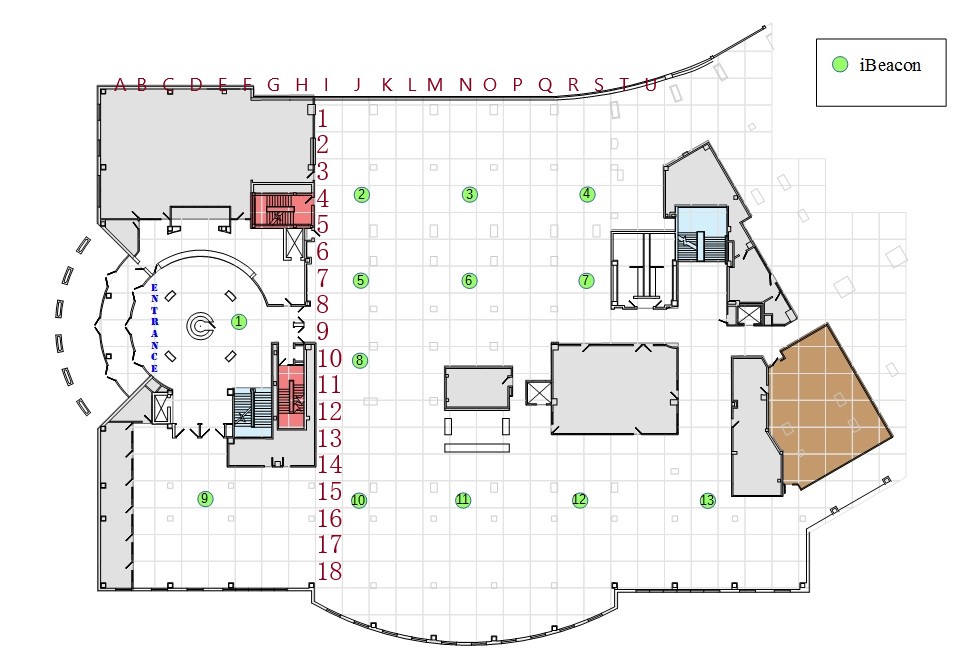}
    \caption{Layout of library room in \texttt{BLE-RSSI} study. Retrieved from \citet{mohammadi2017semisupervised}.}
    \label{fig:room}
\end{figure}

% \begin{figure}
%     \centering
%     \includegraphics[width=\linewidth]{gfx/jointplot_Ymu.pdf}
%     \caption{Scatterplot displaying the relationship between true survival time $Y$ (in months) and predicted values from the model $\hat\mu$ in the labeled sample, i.e.~$R=1$. The gray dotted line represents the bisector line.}
%     \label{fig:Yvsmu}
% \end{figure}

\subsection{\texttt{METABRIC} study}
We further apply our approach to the \texttt{METABRIC} dataset \citep{curtis2012genomic, pereira2016somatic}, one of the largest open-access breast cancer cohorts with gene expression profiles, clinical covariates, and long-term survival follow-up. \citet{cheng2021integrating} and \citet{hsu2023learning} identified 20 biomarkers (reported in Supplementary Table~\ref{tab:app}) as potentially predictive of 5-year disease-specific survival (DSS) status, and defined two prognosis classes accordingly: the poor prognosis class includes patients who died of breast cancer within five years of diagnosis, while the good prognosis class includes those who died of breast cancer after five years.

\begin{table}[ht!]
    \centering
    \begin{tabularx}{\linewidth}{lX}
    \toprule
    \textbf{Type} & \textbf{Covariates} \\
    \midrule
    Biomarkers & ESR1, PGR, ERBB2, MKI67, PLAU, ELAVL1, EGFR, BTRC, FBXO6, SHMT2, KRAS, SRPK2, YWHAQ, PDHA1, EWSR1, ZDHHC17, ENO1, DBN1, PLK1, GSK3B \\
    Clinical features & Age, menopausal state, tumor size, radiotherapy, chemotherapy, hormone therapy, neoplasm histologic grade, cellularity, surgery-breast conserving, surgery-mastectomy \\
    \bottomrule
    \end{tabularx}
    \caption{Variables employed in the \texttt{METABRIC} application as described by \citet{hsu2023learning}.}
    \label{tab:app}
\end{table}

Here, we ask a related but complementary question. Instead of discretizing survival outcomes into prognosis classes, we treat survival time (in months) as a continuous response variable. Then, we aim to estimate the effect of the selected biomarkers on survival among patients who died of breast cancer. Following the labeling conventions of \citet{cheng2021integrating, hsu2023learning}, we treat censored patients -- either still alive or deceased from other causes -- as unlabeled. The resulting dataset includes $n = 611$ labeled observations and $N = 1231$ unlabeled observations.

We frame this task as a coefficient estimation problem in linear regression. Let $Y_i$ denote the survival time (in months) since initial diagnosis for individual $i$; $R_{i} \in \{0, 1\}$ indicate whether the individual died of breast cancer ($R_{i} = 1$) or not; $X_i\in\RR^p$ represent the vector of expression values for $p = 20$ biomarkers of interest; and $W_i\in\RR^s$, with $s = 10$, denote additional clinical features (also described in Supplementary Table~\ref{tab:app}) used to aid estimation of nuisance components. The observed data are $\data_i = (X_i, W_i, R_{i}, R_{i} Y_i)$, for $i=1,\dots,n+N$. We employ both the naive estimator, which only analyzes labeled data, and the \texttt{AIPW} estimator in Equation~\ref{eq:ols}. For the latter, the outcome regression model $\hat\mu$ is estimated using random forests \citep{breiman2001random}, with both biomarker and clinical covariates included as predictors. The propensity score $\hat\pi$ is estimated via logistic regression, again leveraging both sets of covariates. All nuisance functions are estimated using cross-fitting with $J = 5$ folds to ensure valid inference. 

\begin{figure}[t]
    \centering
    \includegraphics[width=\linewidth]{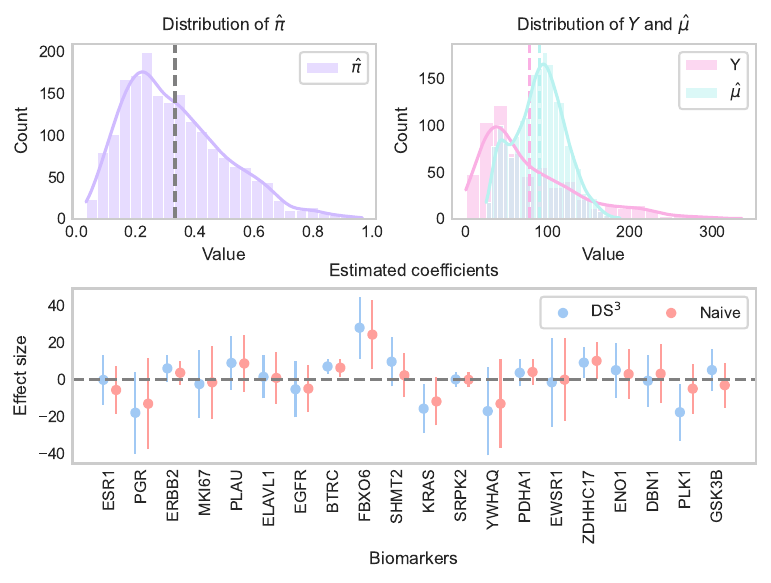}
    \caption{\texttt{METABRIC} application results. Left panel: estimated propensity scores across observations, along with the proportion of labeled data $n / (n + N)$, shown as a vertical dashed line. The heterogeneity in the estimated propensity scores may suggest a missing-at-random (MAR) labeling mechanism. Right panel: marginal distribution of the observed survival times $Y$ (in months) compared to the distribution of predicted survival times from the estimated outcome model using both biomarkers and clinical covariates. Vertical lines indicate the means of each distribution. The discrepancy between observed and predicted outcomes further suggests a MAR mechanism. Bottom panel: estimated linear coefficients from our semi-supervised approach and the naive estimator based solely on labeled data. Confidence intervals are at $\alpha=0.1$ significance level.}
    \label{fig:app}
\end{figure}

While our approach casts the problem as one of linear regression under distribution shift, we acknowledge that alternative methodological frameworks could also be applied. In particular, since the outcome of interest is survival time, this setting naturally lends itself to tools from survival analysis, such as Cox proportional hazards models \citep{cox1972regression}, which are designed to explicitly account for censoring and time-to-event structure. Moreover, our focus on a fixed, literature-informed set of biomarkers connects to the field of inference after feature selection \citep{berk2013valid,kuchibhotla2022post}, which provides tools for valid inference after variable selection. Although we do not pursue these directions here, our method complements these perspectives by offering a flexible and semiparametrically grounded approach to semi-supervised inference under distribution shift -- especially in regimes where labeling is both biased and sparse.

Results are shown in Supplementary Figure~\ref{fig:app}. At the significance level $\alpha = 0.1$, the naive estimator identifies three biomarkers -- \textit{BTRC}, \textit{FBXO6}, and \textit{ZDHHC17} -- as significantly associated with survival outcomes. In contrast, our semi-supervised approach identifies two additional significant biomarkers: \textit{KRAS} and \textit{PLK1}. Prior studies provide biological support for these findings. Specifically, \citet{kim2015activation, tokumaru2020kras} report that elevated \textit{KRAS} expression is linked to poorer survival outcomes, while \citet{garlapati2023plk1, ueda2019therapeutic} associate higher \textit{PLK1} levels with shorter survival in breast cancer patients. %Moreover, of the $20$ biomarkers analyzed, $18$ exhibit consistent coefficient signs between our \ds{} estimator and the naive estimator, indicating strong qualitative agreement. However, for two biomarkers, the naive approach yields opposite signs. One such case is GSK3B, where our \ds{} estimator identifies a positive association with survival time among patients who died of breast cancer, suggesting that higher expression is linked to better outcomes. This aligns with findings from \citet{luo2009glycogen}, which report that GSK3B may function as \enquote{tumor suppressor} for mammary tumors. The second case is DBN1, for which we estimate a negative effect -- indicating that higher DBN1 expression is associated with shorter survival -- while the naive estimator suggests the opposite. This result aligns with the findings of \citet{alfarsi2020integrated}, who identify DBN1 as a prognostic biomarker in luminal breast cancer. Their study shows that elevated DBN1 expression is associated with more aggressive tumor characteristics and higher rates of relapse among patients receiving adjuvant endocrine therapy. While these results are consistent with existing literature, we emphasize that they are not statistically significant and should be interpreted cautiously.

We note that, because survival times are subject to right-censoring, the missing-at-random (MAR) assumption does not hold: patients with longer survival times are more likely to be censored. However, many patients in \texttt{METABRIC} entered the study at different times, leading to censoring across a range of survival times, not only after five years. Consequently, although the MAR assumption is imperfect, its violation may be moderate in practice. 
Therefore, to investigate this issue, we repeat our analysis using only those censored observations corresponding to deaths from competing causes, and report results in Supplementary Figure~\ref{fig:app_competing}. The results using only competing-risk censored data are qualitatively similar to those obtained using all censored observations, supporting the robustness of our approach despite the potential violation of MAR.

\begin{figure}
    \centering
    \includegraphics[width=\linewidth]{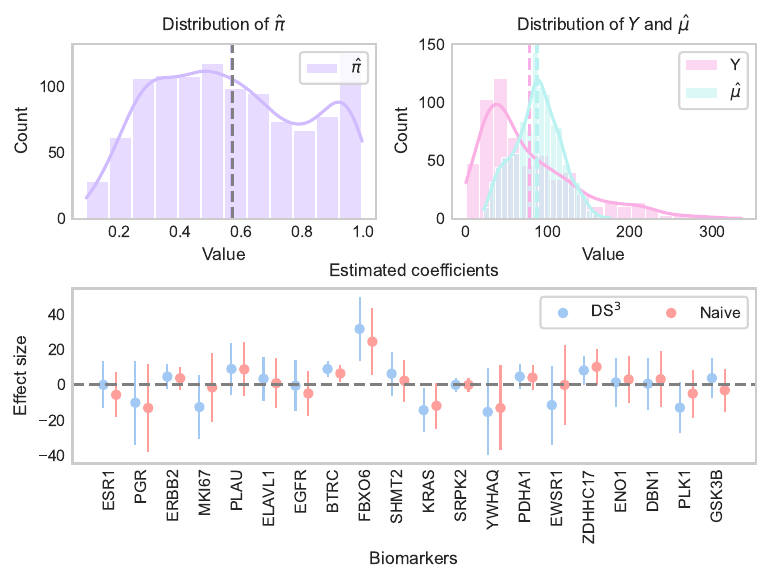}
    \caption{\texttt{METABRIC} application using only competing risk censored data as unlabeled data.}
    \label{fig:app_competing}
\end{figure}

Summarizing, our analysis supports adverse roles for \textit{KRAS} and \textit{PLK1} in breast cancer. These findings underscore the importance of adjusting for distribution shift in semi-supervised survival analyses and demonstrate the practical utility of our proposed approach in real-world biomedical applications.

\end{document}